\documentclass{amsart}
\usepackage{amsmath,amsfonts} 
\usepackage{amsthm} 
\usepackage{amssymb} 
\usepackage[T1]{fontenc} 
\usepackage[utf8]{inputenc}
\usepackage{textcomp}
\usepackage[dvips]{graphicx}
\usepackage{verbatim}
\usepackage{todonotes}
\usepackage{mathrsfs}
\usepackage[english]{babel}
\usepackage[babel]{csquotes}
\usepackage{xy}
\xyoption{all}
\usepackage{hyperref}
\usepackage[style=alphabetic,hyperref,backend=bibtex]{biblatex}
\bibliography{alggeo}

\theoremstyle{definition}
\newtheorem{definition}{Definition}[section]
\theoremstyle{plain}
\newtheorem{theorem}[definition]{Theorem}

\newtheorem{innercustomthm}{Theorem}

\newtheorem*{conj}{Conjecture}
\newtheorem{proposition}[definition]{Proposition}
\newtheorem{lemma}[definition]{Lemma}
\newtheorem{corollary}[definition]{Corollary}
\theoremstyle{remark}
\newtheorem{remark}[definition]{Remark}
\newtheorem{example}[definition]{Example}

\DeclareMathOperator{\Ext}{Ext}
\DeclareMathOperator{\Hom}{Hom}

\DeclareMathOperator{\Spec}{Spec}

\DeclareMathOperator{\rs}{rs}
\DeclareMathOperator{\Strat}{Strat}

\DeclareMathOperator{\Repf}{Repf}

\DeclareMathOperator{\Vtf}{Vtf}

\DeclareMathOperator{\id}{id}

\DeclareMathOperator{\str}{str}

\DeclareMathOperator{\red}{red}
\DeclareMathOperator{\tp}{top}
\DeclareMathOperator{\Der}{Der}
\DeclareMathOperator{\et}{\acute{e}t}

\newcommand{\Oh}{\mathscr{O}}

\newcommand{\D}{\mathscr{D}}

\newcommand{\bin}[2]{\genfrac{(}{)}{0pt}{}{#1}{#2}}

\begin{document}

\title{Gieseker conjecture for homogeneous spaces}
\author{Giulia Battiston}
\date{\today}
\address{ Ruprecht-Karls-Universität Heidelberg -Mathematisches Institut -
Im Neuenheimer Feld 288
D-69120 Heidelberg }
\thanks{This work was supported by DFG-Forschergruppe 1920}
\subjclass[2010]{14H30, 14G17, 13N10}
\email{gbattiston@mathi.uni-heidelberg.de}

\begin{abstract}
We prove Gieseker conjecture for an homogeneous space $X$, saying that if $X$ has no non-trivial tame coverings then it has no non-trivial regular singular $\Oh_X$-coherent $\D_{X/k}$-modules. In order to do so we prove a K\"unneth formula for the regular singular stratified fundamental group and a base change for Gauss-Manin stratifications in the non-proper case.
\end{abstract}
\maketitle

It is a classical result that the topological fundamental group satisfies a K\"unneth formula: namely, if $(X,x)$ and $(Y,y)$ are topological spaces then $\pi_1^{\tp}(X\times Y,x\times y)\simeq \pi_1^{\tp}(X,x)\times \pi_1^{\tp}(Y,y)$. In particular, if $X$ and $Y$ are algebraic complex varieties, the same holds for the \'etale fundamental group $\pi^{\et}(X\times Y)$.

On the other hand, this is known to fail if the ground field $k$ has positive characteristic (unless one of the two varieties is proper) even for $\mathbb{A}^1_k\times \mathbb{A}^1_k$, due to the existence of \'etale covers that are wildly ramified at infinity. Still, if both varieties are smooth and admit good compactifications, it is proven in \cite[Prop.~3]{Hoshi}, in the language of log-schemes, that the tame fundamental group $\pi^{t}$ (as defined in \cite{Kerz/tameness}) satisfies a K\"unneth formula. The first goal of this article is to prove the same for the regular singular fundamental group $\pi^{\rs}$, that is the algebraic $k$-group scheme associated with the Tannaka category of regular singular stratified bundles (see Section~\ref{sec:rsbd} for definitions):
\begin{innercustomthm}[see {Prop.~\ref{prop:Kun}}]\label{thm:1} Let $X$ and $Y$ be two smooth connected varieties over  an algebraically closed field $k$ and fix $x\in X(k)$ and $y\in Y(k)$. Then the canonical morphism
\[\pi^{\rs}(X\times Y,x\times y)\to \pi^{\rs}(X,x)\times \pi^{\rs}(Y,y)\]
is an isomorphism.
\end{innercustomthm}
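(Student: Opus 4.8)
The plan is to deduce the isomorphism from the homotopy exact sequence
\[1\longrightarrow\pi^{\rs}(X,x)\longrightarrow\pi^{\rs}(X\times Y,x\times y)\xrightarrow{\mathrm{pr}_{Y*}}\pi^{\rs}(Y,y)\longrightarrow1\]
attached to the projection $q:=\mathrm{pr}_Y\colon X\times Y\to Y$, and then to split it off. Writing $i_x:=(x,\id_Y)\colon Y\hookrightarrow X\times Y$ and $i_y:=(\id_X,y)\colon X\hookrightarrow X\times Y$ for the two closed immersions cut out by the base points, the Tannakian dictionary — a homomorphism of affine $k$-group schemes is faithfully flat, resp.\ a closed immersion, if and only if the associated tensor functor is fully faithful with essential image closed under subobjects, resp.\ every object of the source category is a subquotient of one in its image — reduces exactness of the sequence to the following three assertions:
\begin{enumerate}
\item[(a)] $q^{*}\colon\Strat^{\rs}(Y)\to\Strat^{\rs}(X\times Y)$ is fully faithful with essential image closed under subobjects;
\item[(b)] every object of $\Strat^{\rs}(X)$ lies in the essential image of $i_y^{*}\colon\Strat^{\rs}(X\times Y)\to\Strat^{\rs}(X)$;
\item[(c)] an $E\in\Strat^{\rs}(X\times Y)$ is isomorphic to $q^{*}F$ for some $F\in\Strat^{\rs}(Y)$ if and only if $i_y^{*}E$ is a trivial stratified bundle.
\end{enumerate}
This is the scheme followed in \cite[Prop.~3]{Hoshi} for the tame fundamental group; the extra ingredient that makes it work for $\pi^{\rs}$ is the base-change theorem for Gauss--Manin stratifications in the non-proper case, also proven here.

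For the soft assertions (a) and (b) I would argue as follows. Since $q\circ i_x=\id_Y$ one has $i_x^{*}q^{*}\simeq\id$, so $q^{*}$ is injective on morphism sets; full faithfulness is standard and ultimately rests on $\End_{\D_{X/k}}(\Oh_X)=k$ for $X$ connected, a fact that genuinely uses the divided powers in $\D$ and not merely $\de$, since in characteristic $p$ a $\de$-horizontal function need not be constant. Closedness of the image under subobjects follows from a rank argument used again below: given a sub-stratified-bundle $V\subseteq q^{*}G$, set $W:=i_x^{*}V$, a regular singular sub-stratified-bundle of $G$; then $V$ and $q^{*}W$ are sub-bundles of $q^{*}G$ with the same image $W$ under the exact functor $i_x^{*}$ (all objects of $\Strat^{\rs}$ being locally free), so $(V+q^{*}W)/V$ and $(V+q^{*}W)/q^{*}W$ are regular singular stratified bundles on the connected variety $X\times Y$ whose restriction along $i_x$ vanishes, hence are of rank $0$, hence vanish; thus $V=q^{*}W$. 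Assertion (b) is immediate from the other section: $\mathrm{pr}_X\circ i_y=\id_X$ gives $i_y^{*}\mathrm{pr}_X^{*}\simeq\id$, so every $E_0\in\Strat^{\rs}(X)$ equals $i_y^{*}(\mathrm{pr}_X^{*}E_0)$.

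The heart of the matter, and the step I expect to be the main obstacle, is the ``only if'' direction of (c). Given $E\in\Strat^{\rs}(X\times Y)$ with $i_y^{*}E\simeq\Oh_X^{\oplus r}$, $r=\rk E$, I would form the degree-zero Gauss--Manin stratification $G:=\mathcal{H}^{0}q_{\mathrm{dR},*}E$ on $Y$, i.e.\ the sheaf of $q$-relatively horizontal sections of $E$ with its Gauss--Manin $\D_{Y/k}$-action. By the base-change theorem, $G$ is a regular singular stratified bundle on $Y$ whose formation commutes with base change, so $G|_{y'}=H^{0}_{\mathrm{dR}}(X,E|_{X\times y'})$ for every $y'\in Y(k)$; in particular $\rk G=\dim_kG|_y=\dim_kH^{0}_{\mathrm{dR}}(X,\Oh_X^{\oplus r})=r$. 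Relative horizontality produces an evaluation morphism $\mathrm{ev}\colon q^{*}G\to E$, which is $\D_{X\times Y/k}$-linear precisely because $G$ carries its Gauss--Manin stratification, and whose restriction along $i_y$ is the isomorphism $\Oh_X^{\oplus r}=G|_y\otimes\Oh_X\xrightarrow{\sim}i_y^{*}E$ expressing sections in the chosen horizontal basis. Hence $\ker(\mathrm{ev})$ and $\coker(\mathrm{ev})$ are regular singular stratified bundles on the connected $X\times Y$ with vanishing restriction along $i_y$, so they are of rank $0$ and vanish; thus $\mathrm{ev}$ is an isomorphism and $E\simeq q^{*}G$. It is exactly here that regular singularity is indispensable: without it the relative $H^{0}$ need not be coherent, let alone a bundle commuting with base change, and the conclusion itself is false — already on $\A^{1}_k\times\A^{1}_k$ in positive characteristic, because of wild ramification at infinity.

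To conclude, once (a)--(c) give exactness of the sequence, the section $s:=i_{x*}$ splits it (as $q\circ i_x=\id_Y$), so that $\pi^{\rs}(X\times Y,x\times y)=N\rtimes s(\pi^{\rs}(Y,y))$ with $N:=\ker(\mathrm{pr}_{Y*})$, exactness identifying $N$ with $\pi^{\rs}(X,x)$ via $i_{y*}$. It remains to see that the conjugation action of $s(\pi^{\rs}(Y,y))$ on $N$ is trivial, which I would get from the third projection: $\mathrm{pr}_{X*}$ restricts to an isomorphism on $N$ (since $\mathrm{pr}_X\circ i_y=\id_X$) and is trivial on $s(\pi^{\rs}(Y,y))$ (since $\mathrm{pr}_X\circ i_x$ is constant), so for $n\in N$ and $g\in s(\pi^{\rs}(Y,y))$ the element $gng^{-1}\in N$ has the same image as $n$ under the injective map $\mathrm{pr}_{X*}|_N$, whence $gng^{-1}=n$. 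Therefore $\pi^{\rs}(X\times Y,x\times y)=N\times s(\pi^{\rs}(Y,y))$, and unwinding the identifications this decomposition is precisely the canonical morphism $(\mathrm{pr}_{X*},\mathrm{pr}_{Y*})$, which is thus an isomorphism.
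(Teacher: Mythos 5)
Your overall strategy coincides with the paper's in its first half: reduce to exactness of $\pi^{\rs}(X)\to\pi^{\rs}(X\times Y)\to\pi^{\rs}(Y)$ via the Tannakian exactness criterion, establish the ``normal hull'' condition using the $0$-th Gauss--Manin stratification $q_*E^{\nabla_{X\times Y/Y}}$ and its compatibility with restriction to fibers, and split the resulting extension using the sections $i_x,i_y$ (your rank arguments and the splitting at the end are fine). But there is a genuine gap: everything in your step (c) (and the fullness claim in (a)) rests on the base-change theorem for the Gauss--Manin stratification, which is Proposition~\ref{prop:bc}, and that proposition requires a good compactification of $X\times Y\to Y$ relative to $Y$, i.e.\ a proper smooth compactification $\overline{X}$ of $X$ with snc boundary. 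In positive characteristic such a compactification is not known to exist in general (it would follow from resolution of singularities), and your proposal never addresses the general case. The paper spends the entire second half of its proof on exactly this point: by de Jong's theorem there is a Galois alteration $f:X'\to X$ with $X'$ admitting a good compactification; one reduces to $f$ separable Galois using topological invariance of $\Strat^{\rs}$, and then transfers exactness from $X'\times Y$ down to $X\times Y$ by a diagram chase built on the exact sequences for Galois covers (Lemma~\ref{lem:galois} and Corollary~\ref{cor:gal}). Without this reduction your argument only proves the theorem for varieties admitting a good compactification.

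A secondary, repairable omission: the exactness criterion actually used (Proposition~\ref{prop:crit}(iii)) demands, besides the normal-hull condition and the sub/quotient condition, that for \emph{every} $E\in\Strat^{\rs}(X\times Y)$ the maximal trivial subobject of $i_y^*E$ descends to a subobject $E'\subset E$; your (c) only treats the case where $i_y^*E$ is entirely trivial, and closed immersion, faithful flatness and the normal-hull condition alone do not yield exactness. The fix uses the same tool: $E':=q^*q_*E^{\nabla_{X\times Y/Y}}$ is a subobject of $E$ whose restriction along $i_y$ is, again by Proposition~\ref{prop:bc}, precisely the maximal trivial subobject of $i_y^*E$ --- but this has to be stated and verified, not left implicit. (One should also note, as the paper does, that $q^*q_*E^{\nabla_{X\times Y/Y}}$ and $q_*E^{\nabla_{X\times Y/Y}}$ are regular singular, being a subobject of $E$ and its descent respectively.)
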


As the tame fundamental group is the profinite completion of the regular singular one (see \cite[Cor.~5.6]{Kindler/FiniteBundles}), this gives in particular a new proof of the K\"unneth formula for the tame fundamental group, independent from the existence of good compactifications.

\

The second goal of this paper is to apply the previous theorem to a conjecture of Gieseker, analogous to Grothendieck and Mal\v{c}ev's theorem (see \cite{Grothendieck/Malcev}): let $k$ be an algebraically closed field of positive characteristic and $X$ a smooth variety over $k$, then Gieseker conjectured in \cite{Gieseker/FlatBundles} that $\pi^{\Strat}(X)=0$ if and only if $\pi^{\et}(X)=0$, where $\pi^{\Strat}(X)$ is the stratified fundamental group, that is the $k$-group scheme corresponding via Tannaka duality to the category of $\Oh_X$-coherent $\D_{X/k}$-modules. It was proven by Esnault and Mehta in \cite{Esnault/Gieseker} that if $X$ is proper this is indeed the case.
If $X$ is not proper, though, the following variant of the conjecture was proposed in \cite[Q.~3.1]{Esnault/ECM} and further discussed in \cite[Conj.~1.3]{Kindler/Evidence}:
\begin{conj} Let $X$ be a smooth variety over an algebraically closed field $k$ of positive characteristic. Then $\pi^{t}(X)=0$ if and only if $\pi^{rs}(X)=0$.
\end{conj}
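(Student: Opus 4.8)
The plan is to split the biconditional, since one implication is formal and the other carries all the content. For the easy direction, recall from \cite[Cor.~5.6]{Kindler/FiniteBundles} that $\pi^{t}(X)$ is the profinite completion of $\pi^{\rs}(X)$; as the profinite completion of the trivial group is trivial, $\pi^{\rs}(X)=0$ at once gives $\pi^{t}(X)=0$. The whole difficulty is therefore the converse, $\pi^{t}(X)=0\Rightarrow\pi^{\rs}(X)=0$, which by Tannaka duality says that every regular singular stratified bundle $E$ on $X$ is trivial, i.e.\ a direct sum of copies of $(\Oh_X,\Di)$.

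To prove this I would adapt the Frobenius--periodicity argument that Esnault and Mehta used in the proper case \cite{Esnault/Gieseker}. First reduce to $k=\bar{\F}_p$: a regular singular stratified bundle is given by finitely many Frobenius descent data, so $E$ spreads out over a finitely generated $\F_p$-algebra, and a standard specialization argument — using that triviality of a stratified bundle is a constructible condition in families and that the tame fundamental group is compatible with specialization — reduces the statement over an arbitrary algebraically closed $k$ to the statement over $\bar{\F}_p$. Over $\bar{\F}_p$, view $E$ as an $F$-divided sheaf $(E_i,\sigma_i\colon F^*E_{i+1}\xrightarrow{\sim}E_i)$. The crucial input is \emph{boundedness}: the regular singular stratified bundles on $X$ of rank $\le r$ form a bounded family, so over a fixed finite field $\F_q$ the sequence $(E_i)$ realizes only finitely many isomorphism classes. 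By the pigeonhole principle there are $i<j$ with $E_i\cong E_j$, whence $F^{*(j-i)}E_j\cong E_i\cong E_j$, so $E_j$ is Frobenius-periodic. By Lange--Stuhler a Frobenius-periodic bundle is trivialized by a finite étale cover, and the regular singularity of $E$ forces, via the correspondence between regular singular bundles and tame ramification, this cover to be \emph{tame}. Since $\pi^{t}(X)=0$ there is no nontrivial tame cover, so $E_j$ is trivial; but $E_0\cong F^{*j}E_j$ and the Frobenius pullback of a trivial bundle is trivial, hence $E=E_0$ is trivial. Note that this argument disposes of the full bundle simultaneously, so no separate treatment of extensions or of the prounipotent part of $\pi^{\rs}(X)$ is required.

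The step I expect to be the main obstacle is \textbf{boundedness in the non-proper case}. In the proper situation one has Langer's boundedness of semistable sheaves with fixed Hilbert polynomial; here one must instead bound bundles together with their regular singular structure along the boundary, which requires a good compactification $\bar X\supseteq X$ with normal crossings complement on which the residues of the stratification are controlled. Producing such a compactification in arbitrary dimension and positive characteristic is precisely the type of input that resolution of singularities would supply but which is not available in general. This is why the conjecture is out of reach for arbitrary smooth $X$, and why one retreats to classes of varieties — such as homogeneous spaces, which carry equivariant normal crossings compactifications — where the compactification and boundedness inputs are at hand and where, in addition, the Künneth formula of Prop.~\ref{prop:Kun} allows one to factor $\pi^{\rs}$ and reduce to well-understood pieces.
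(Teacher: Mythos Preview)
The statement you are addressing is recorded in the paper as a \emph{conjecture}; there is no proof of it in the paper, and your proposal does not supply one either. You correctly isolate the easy direction via \cite[Cor.~5.6]{Kindler/FiniteBundles}. For the hard direction you sketch an Esnault--Mehta style argument and then, honestly, flag boundedness as the obstacle. But there are further gaps you gloss over: the ``standard specialization argument'' reducing to $\bar{\F}_p$ is not standard in the open case (specialization for $\pi^{t}$ of non-proper varieties and constructibility of triviality for regular singular bundles in families are both genuine issues), and the step ``regular singularity of $E$ forces the Lange--Stuhler cover to be tame'' presumes that the shifted $F$-divided sequence $(E_j,E_{j+1},\dotsc)$ is again regular singular, which you have not argued.

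More importantly, your final paragraph misidentifies how the paper treats the quasi-homogeneous case. The paper does \emph{not} run an Esnault--Mehta argument using equivariant compactifications and boundedness. Its route is structural: the K\"unneth formula (Prop.~\ref{prop:Kun}) forces $\pi^{\rs}(G)$ to be commutative for any smooth connected group scheme $G$; for $X=G/H$ with $H$ connected this makes $\pi^{\rs}(X)$ commutative, and in general $\pi^{\rs}(X)$ is exhibited as an extension of a commutative group scheme by a finite one. The hypothesis $\pi^{t}(X)=0$ kills the finite piece, so $\pi^{\rs}(X)$ is commutative, and one finishes with the purely abelian input (Prop.~\ref{prop:larsabe}, building on \cite{Kindler/Evidence} and \cite{DosSantos}) that a trivial abelianized tame fundamental group forces a trivial abelianized regular singular one. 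No Frobenius-periodicity, no boundedness, and no compactification of $X$ enters; the compactification machinery in the paper is used only to establish K\"unneth, not to bound moduli. So even restricted to homogeneous spaces your outline and the paper's proof are disjoint strategies.
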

As the tame fundamental group is the profinite completion of the regular singular, one direction is trivial. In this article we prove the other direction for \emph{quasi-homogeneous spaces}, that is varieties admitting an homogeneous space as dense open subset:
\begin{innercustomthm}[{see Thm.~\ref{thm:conj}}]\label{thm:2}
Let $X$ be a smooth connected quasi-homogeneous space over a positive characteristic field $k$ and let $x\in X(k)$. Let $\bar{k}$ be an algebraic closure of $k$, then $\pi^{t}(X\times_{\Spec k}\Spec\bar{k})=0$ if and only if $\pi^{rs}(X,x)=0$.
\end{innercustomthm}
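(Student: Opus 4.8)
\emph{Plan of proof.} The ``only if'' direction is immediate: $\pi^{t}$ is the profinite completion of $\pi^{rs}$ by \cite[Cor.~5.6]{Kindler/FiniteBundles}, and triviality of a regular singular stratified bundle is fppf-local, so $\pi^{rs}(X,x)=0$ gives $\pi^{rs}(X_{\bar k},\bar x)=0$, hence $\pi^{t}(X_{\bar k})=0$. For the converse I would first reduce to $k=\bar k$: the base change $\Strat^{rs}(X)\to\Strat^{rs}(X_{\bar k})$ induces a faithfully flat morphism $\pi^{rs}(X_{\bar k},\bar x)\to\pi^{rs}(X,x)\times_{k}\bar k$ --- this uses that regular singularity is preserved under the extension $\bar k/k$, which is a special case of the base change for Gauss--Manin stratifications --- so $\pi^{rs}(X_{\bar k},\bar x)=0$ already forces $\pi^{rs}(X,x)=0$. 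It therefore suffices to prove: if $X$ is smooth connected quasi-homogeneous over an algebraically closed field $k$ of positive characteristic and $\pi^{t}(X)=0$, then $\pi^{rs}(X,x)=0$.

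Fix a dense open homogeneous subset $U=G/H\subseteq X$ with $G$ connected (replace $G$ by its identity component). \textbf{Step 1 (Albanese).} If $A=\operatorname{Alb}(X)\neq0$ then, for a prime $\ell\neq p$, the isogeny $[\ell]\colon A\to A$ is non-trivial and tame, and its pullback along the Albanese map $X\to A$ is a non-trivial tame étale cover of $X$ (non-trivial because the image of $X$ generates $A$), contradicting $\pi^{t}(X)=0$. Hence $\operatorname{Alb}(X)=0$; since $\operatorname{Alb}(U)$ surjects onto $\operatorname{Alb}(X)$, this forces the acting group $G$ to be \emph{linear}. \textbf{Step 2 (building blocks).} Using the reductive quotient of $G$ and the Borel--Tits description of its closed subgroups over the perfect field $k$, one builds a tower of fibrations $X=X_{0}\to X_{1}\to\cdots\to X_{r}$ in which each fibre is, up to removing a closed subset of codimension $\ge2$, either (i) an affine space $\A^{n}$ (a quotient of a unipotent group, hence over $k$ a variety isomorphic to affine space), or (ii) a flag variety $G'/P$ with $G'$ semisimple simply connected, and in which $X_{r}$ is an affine homogeneous space $G_{1}/H_{1}$ with $G_{1}$ semisimple simply connected and $H_{1}$ reductive, contained in no proper parabolic. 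Crucially no torus arises as a genuine quotient fibration: a $\mathbb{G}_{m}$-quotient would give a non-constant map $X\to\mathbb{G}_{m}$, hence a non-trivial tame cover, contradicting Step~1.

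To descend the tower I would use the fibration exact sequence for $\pi^{rs}$: for $X_{i}\to X_{i+1}$ with fibre $F$, a regular singular stratified bundle on $X_{i}$ that is trivial on every fibre descends to $X_{i+1}$, the descent commuting with base change on $X_{i+1}$; this is precisely what Prop.~\ref{prop:Kun} and the non-proper Gauss--Manin base change provide, and it makes $\operatorname{im}(\pi^{rs}(F)\to\pi^{rs}(X_{i}))$ normal in $\pi^{rs}(X_{i})$ with quotient $\pi^{rs}(X_{i+1})$. The fibres then contribute nothing: $\pi^{rs}(\A^{n})=0$ by Prop.~\ref{prop:Kun} and the vanishing $\pi^{rs}(\A^{1})=0$; removing a codimension-$\ge2$ subset changes neither $\Strat^{rs}$ (Hartogs/reflexive-hull purity for $\Oh$-coherent $\D$-modules) nor $\pi^{t}$; and for $F=G'/P$, which is proper, $\pi^{rs}(G'/P)=\pi^{\Strat}(G'/P)=0$, using $\pi^{\et}(G'/P)=0$ (exact sequence of $G'\to G'/P$, $G'$ simply connected) and Esnault--Mehta \cite{Esnault/Gieseker}. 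Propagating up the tower reduces everything to showing $\pi^{rs}(G_{1}/H_{1})=0$.

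The main obstacles are the two inputs this rests on. First, the correct non-proper form of the fibration sequence: naive right-exactness of $\pi^{rs}(F)\to\pi^{rs}(X_{i})\to\pi^{rs}(X_{i+1})\to1$ can fail, since a regular singular bundle on $X_{i}$ need not stay regular singular along $F$ once $X_{i+1}$ is compactified, and taming this --- making the pushforward of a regular singular stratified bundle again regular singular and base-change compatible --- is exactly where the Gauss--Manin base change substitutes for properness. Second, and hardest, is the residual case $\pi^{rs}(G_{1}/H_{1})=0$ with $G_{1}$ semisimple simply connected and $H_{1}$ reductive in no proper parabolic (prototype $X=\SL_{2}$): here $X$ is affine, neither an affine-space nor a flag fibration is available, and $\pi^{rs}(X)$ could a priori pick up unipotent or infinitesimal quotients from stratified bundles that are wildly --- though not tamely --- ramified at infinity. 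The plan there is to reduce once more to Esnault--Mehta via a smooth equivariant (wonderful-type) compactification $\bar X\supseteq X$, which is proper, rational, with $\pi^{\et}(\bar X)=0$, and to show that a regular singular stratified bundle on $X$ extends across $\bar X$: this needs a purity estimate forcing the logarithmic monodromy along $\bar X\setminus X$ to vanish, combining $\pi^{t}(X)=0$ (which kills the tame part) with a further argument for the infinitesimal part. Making this last step precise should be the technical heart of the argument.
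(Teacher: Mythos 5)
Your plan diverges substantially from the paper's argument, and it has a genuine gap exactly at the point you yourself flag as ``the technical heart'': the residual affine case $\pi^{\rs}(G_1/H_1)=0$ (prototype $G_1=\SL_2$) is not proved, and the route you sketch for it is unlikely to close. A regular singular stratified bundle on an affine group such as $\SL_2$ need not extend to a stratified bundle on a smooth (e.g.\ wonderful) compactification: the obstruction is measured by exponents in $\mathbb{Z}_p/\mathbb{Z}$ along the boundary, and $\pi^{t}(X)=0$ does not in any direct way kill these, since non-trivial exponents correspond to multiplicative (diagonalizable) quotients of the monodromy group rather than to finite \'etale covers. Without that case the whole tower collapses, because every quasi-homogeneous space with linear acting group eventually reduces to it.

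The paper takes a shorter route that handles precisely this case. The K\"unneth formula (Prop.~\ref{prop:Kun}) is not used merely to dispose of affine-space fibres: its main role is Grothendieck's argument that $\pi^{\rs}(G)$ is a group object in the category of group schemes, hence \emph{commutative} (Prop.~\ref{prop:comm}); commutativity is pushed to $G/H_0$ by fppf descent along the torsor $G\to G/H_0$ (Lemma~\ref{lem:homsp}), and then $\pi^{\rs}(G/H)$ is exhibited as a quotient of $\pi^{\rs}(G/H_0)$ by the Galois sequence of Lemma~\ref{lem:galois} once $\pi^{t}=0$ kills the finite quotient $H^{t}$. Once $\pi^{\rs}(X)$ is known to be commutative (or, in the quasi-homogeneous case, generated by the image of a commutative group), triviality follows from the structure of commutative quotients of $\pi^{\rs}$: the abelian unipotent part is profinite and matches that of $\pi^{t}$ (Prop.~\ref{prop:uni}), and the multiplicative part is controlled by tame rank-one covers (\cite[Prop.~3.16]{Kindler/Evidence}), so $\pi^{t}(X)=0$ forces the abelianization of $\pi^{\rs}(X)$ to vanish (Prop.~\ref{prop:larsabe}). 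This is exactly what replaces your missing purity/extension step. Your Albanese and Borel--Tits reductions and the tower of fibrations are machinery the paper does not need; to salvage your approach you would either have to prove the $\SL_2$-type case by the commutativity argument anyway (making the tower superfluous) or supply the missing extension statement, which I do not believe holds in the form you state it.
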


Theorem~\ref{thm:2} relies on Theorem~\ref{thm:1} as follows: by a classical argument of Grothendieck, the fact that $\pi^{\rs}$ satisfies the K\"unneth formula implies $\pi^{\rs}(G)$ is commutative when $G$ is a connected group scheme. Let $G\to X=G/H$ be the quotient map to the connected homogeneous space we are considering. Then, without loss of generality, $G$ can be assumed to be connected and we prove that $\pi^{\rs}(X)$ is an extension of a quotient of $\pi^{\rs}(G)$ by a finite group scheme. In particular the hypothesis of the theorem implies that $\pi^{\rs}(X)$ is commutative as well and we can apply a slight strengthening of the results from \cite{Kindler/Evidence} to conclude.

Similar methods also give a relative version of the theorem (see Proposition~\ref{prop:giemor}): if $f:X\to Y$ is a morphism of proper homogeneous spaces inducing an isomorphism on the tame fundamental groups, the same holds for the regular singular ones.

\

As for the K\"unneth formula, its proof highly relies on (and is actually equivalent to the good properties of) the so called $0$-th Gauß-Manin stratification, introduced by Ph\`ung in \cite{Phung}. It is a positive characteristic analogue of the Gauß-Manin connection, and Ph\`ung uses it to give a (new) proof of the K\"unneth formula for $\pi^{\Strat}(X\times Y)$ when $X$ is proper over $k$. For our purposes, the main property of the $0$-th Gauß-Manin stratification is that it behaves well under the base change to the fibers of proper morphisms. 
We prove that the same holds more generally for smooth morphisms $f:X\to S$ with smooth relative compactification $\overline{X}$ over $S$, if we start with a regular singular stratified bundle (see Prop.~\ref{prop:bc}). The crucial point in proving this is to understand the maximal sub-object of a regular singular stratified bundle that extends to an actual stratified bundle on some smooth compactification, and to prove in Proposition~\ref{prop:max} that this maximal object behaves well under the base change to the fibers of smooth morphisms.

The last ingredient is a study of the behaviour of the stratified and regular singular fundamental group under a finite \'etale Galois cover (Lemma~\ref{lem:galois}), which was already proven in characteristic zero in \cite[§1.4]{Katz/Calculation}: if $X\to Y$ is a Galois \'etale cover of group $G$, then the sequence
\[1\to\pi^{\Strat}(X)\to\pi^{\Strat}(Y)\to G\to 1\]
is exact and the sequence
\[1\to\pi^{\rs}(X)\to\pi^{\rs}(Y)\to G^t\to 1,\]
is exact on the right, where $G^t$ is the maximal quotient of $G$ corresponding to a tame cover of $Y$. Moreover it is exact on the left whenever $G=G^t$.

The article is organized as follows. In Section~\ref{sec:rsbd} we introduce regular singular stratified bundles and we recall their main properties, in Section~\ref{sec:repthy} we collect a few results in representation theory that will be used in the various proofs. In Section~\ref{sec:max} we prove the base change for the maximal sub-stratified bundle, while in Section~\ref{sec:GM} we prove the same property for the Gauss-Manin connection associated with a regular singular stratified bundle. In Section~\ref{sec:galois} we study the behaviour of the category of stratifications under a fintite \'etale Galois cover and Section~\ref{sec:Kun} is dedicated to prove the K\"unneth formula for the regular singular fundamental group.  Finally, the proof of Gieseker conjecture for quasi-homogeneous spaces occupies Section~\ref{sec:conj}.

%\begin{remark} Appealing as it may seem, abelian fundamental group does not imply homogeneous space. The classical example in topology are the $n$-spheres for $n\neq 0,1,3,7$, all have abelian fundamental group (trivial, even) but by Adam's theorem they cannot be $H$-spaces.
%\end{remark}
\subsection*{Acknowledgements} I would like to thank Wolfgang Soergel, Fritz H\"ormann and  Lars Kindler for helpful discussions and H\'el\`ene Esnault for useful comments on a first version of this article.

\subsection*{Notation and general definitions} When not otherwise specified, fiber products are taken over the spectrum of the ground field $k$, which will always be assumed to be of positive characteristic.  A variety is a separated scheme of finite type over $k$. 

When the base scheme is the spectrum of a field, we will often abbreviate relative notions over $\Spec k$ writing only $k$ instead: for example $\D_{X/k}$ will stand for $\D_{X/\Spec k}$ and so on.

If $X$ is a smooth $S$-variety, a \emph{partial good compactification} $(X,\overline{X})$ of $X$ over $S$ is an open $S$-embedding $X\subset \overline{X}$  in a variety smooth over $S$, such that $\overline{X}-X$ is a snc divisor. We say that $(X,\overline{X})$ is a \emph{good compactification} over $S$ if moreover $\overline{X}$ is proper over $S$. When the base $S$ is not specified, we mean over the spectrum of the ground field.
\section{Regular singular stratified bundles}\label{sec:rsbd}
We collect in this section the definitions and the properties of the main objects of study of this article, that is regular singular stratified bundles and the group scheme associated to them via Tannaka duality (\cite[Thm.~2.11]{DeligneMilne}).
If $X$ is a smooth variety, then the sheaf of differential operators $\D_{X/k}$, as defined in \cite[§16]{SGA4}, is locally generated over $\Oh_X$ by the operators $\partial_{x_i}^{(k)}$, $k\in\mathbb{N}$, where we define
\[\partial_{x_i}^{(k)}(x_j^h)=\delta_{ij} \bin{h}{k} x_j^{h-k},\]
with $x_1,\dotsc, x_d$ denoting a choice of local \'etale coordinates for $X$.

If $(X,\overline{X})$ is a partial good compactification with $D=(\overline{X}\backslash X)_{\red}$, then $\D_{\overline{X}/k}(log D)$, denotes is the subsheaf of $\D_{\overline{X}/k}$ consisting of all those operators locally fixing all powers of the defining ideal of $D$. If locally $D$ is defined by $x_1\cdot\dotsc\cdot x_l=0$, then $\D_{\overline{X}/k}(log D)$ is generated by the operators $x_i^k\partial_{x_i}^{(k)}$ for $i=1,\dotsc,l$ and $\partial_{x_j}^{(k)}$ for $i=l+1,\dotsc,d$ and $k\in\mathbb{N}$. For $\underline{n}=(n_1,\dotsc, n_d)$, we denote
\[\mathsf{D}_{\underline{n}}=\partial_{x_1}^{(n_1)}\dotsc\partial_{x_d}^{(n_d)}.\]

If $X\to S$ is a smooth morphism, and $X\subset\overline{X}$ is a partial good compactification over $S$, we set $\D_{X/S}$ to be the subsheaf of $\D_{X/k}$ of operators that are $\Oh_S$-linear and $\D_{\overline{X}/S}(log D)=\D_{\overline{X}/k}(log D)\cap \D_{X/S}$.

The sheaf $\D_{X/k}$ has a filtration $\D_{X/k}^{<i}$ given locally by differential operators of order strictly less than $p^i$, where $p$ is the characteristic of $k$, that is $\D_{X/k}^{<0}=\Oh_X$, $\D_{X/k}^{<1}$ is the span of $\Der(X/k)$ and so on.

\begin{definition} Let $X$ be a smooth variety over $k$.
\begin{itemize}
\item[i)]A \emph{stratified bundle} over $X$ is a $\D_{X/k}$-module which is $\Oh_X=\D_{X/k}^{< 0}$-coherent. If $X\to S$ is a smooth morphism of smooth varieties, a \emph{relative} stratified bundle is a $\Oh_X$-coherent $\D_{X/S}$-module. Morphism of (relative) stratified bundles are \emph{horizontal} morphisms, that is commuting with the action of the differential operators. We denote by $\Strat(X/k)$ the category of stratified bundles over $X$ and by $\Strat(X/S)$ the category of relative stratified bundles.
\item[ii)]Let $(X,\overline{X})$ be a partial good compactification and let $D=(\overline{X}\backslash X)_{\red}$, then a $\D_{X/k}$-module is called \emph{$(X,\overline{X})$-regular singular} if it admits a torsion free $\Oh_{\overline{X}}$-coherent $\D_{\overline{X}/k}(\log D)$-extension. We say that a $\D_{X/k}$-module is \emph{regular singular} if it is $(X,\overline{X})$-regular singular for every partial good compactification $(X,\overline{X})$. We denote the full subcategory of $\Strat(X/k)$ of $(X,\overline{X})$-regular singular stratified bundles by $\Strat^{\rs}(X,\overline{X})$ and the one of regular singular stratified bundles by $\Strat^{\rs}(X/k)$.
\end{itemize} \end{definition}

If $\mathcal{T}$ is a Tannaka category over $k$ and $T\in\mathcal{T}$, we denote by $\langle T\rangle_\otimes$ the minimal full subcategory of $\mathcal{T}$ containing $T$ and closed under direct sums, duals, tensor product and subobjects and if $\omega:\mathcal{T}\to \Vtf_k$  is a neutral fiber functor, we denote by $\pi(T,\omega)$ the Tannaka dual of $(\langle T\rangle_\otimes,\omega)$.

We collect here a list of properties of the objects we just defined, that will be needed along the article:
\begin{proposition}\label{prop:prop} Let $X$ be a smooth connected variety, $\overline{X}$ a partial good compactification and $x\in X(k)$.
\begin{itemize}
\item[i)]\emph{(see \cite{Saavedra} and \cite{Kindler/FiniteBundles})} $\Strat(X/k)$, $\Strat^{\rs}(X,\overline{X})$ and $\Strat^{\rs}(X/k)$ are Tannakian categories, and taking the fiber at the rational point $x$ neutralizes them. We denote by $\pi^{\Strat}(X,x)$, $\pi^{\rs}(X,\overline{X},x)$ and $\pi^{\rs}(X,x)$ their associated Tannaka duals.
\item[ii)]\emph{(see \cite{Kindler/FiniteBundles})} if $E\in\Strat(X/k)$, $U$ is a non-empty open of $X$ and $x\in U(k)$ then the restriction functor induces an isomorphism $\pi(E_{\mid U})\simeq \pi(E)$ and faithfully flat maps $\pi^{\Strat}(U,x)\to\pi^{\Strat}(X,x)$ and $\pi^{\rs}(U,x)\to\pi^{\rs}(X,x)$.
\item[iii)]\emph{(see \cite{Kindler/FiniteBundles})} if $U\subset \overline{X}$ is an open whose complement has codimension at least $2$, then the restriction functor induces an equivalence of categories $\Strat(X/k)\to\Strat(X\cap U/k)$ and $\Strat^{\rs}(X,\overline{X})\to\Strat^{\rs}(X\cap U,U)$. 
\end{itemize}
\end{proposition}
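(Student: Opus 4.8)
The strategy is to equip each of the three categories with the structure of a neutral Tannakian category with fiber functor $\omega_x := (-)\otimes_{\Oh_X}k(x)$, and then to read off the statements about fundamental groups from the standard dictionary between exact faithful tensor functors and morphisms of Tannaka duals \cite[Prop.~2.21]{DeligneMilne}.

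\emph{(i).} The one structural input is that every $\Oh_X$-coherent $\D_{X/k}$-module on a smooth variety is automatically locally free: writing $\D_{X/k}=\varinjlim_i\D_{X/k}^{<i}$, such a module $E$ is canonically the Frobenius pullback $F_X^{i*}E_i$ of its $i$-th Frobenius descent $E_i$ for every $i$, and faithful flatness of $F_X$ forces $E$ to be a vector bundle (see \cite{Saavedra}, \cite{Kindler/FiniteBundles}). Granting this, $\Strat(X/k)$ is abelian --- kernels and cokernels of horizontal morphisms are $\Oh_X$-coherent and $\D_{X/k}$-stable, hence again vector bundles --- and carries a rigid $k$-linear tensor structure, the action on tensor products being the divided-power Leibniz rule $\partial_{x_i}^{(k)}(s\otimes t)=\sum_{a+b=k}\partial_{x_i}^{(a)}(s)\otimes\partial_{x_i}^{(b)}(t)$ and duals existing because objects are vector bundles. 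Since $X$ is connected and $k$ algebraically closed, $\End(\mathbf{1})=\Gamma(X,\Oh_X)^{\nabla}=k$; and $\omega_x$ takes values in $\Vtf_k$ (as $x\in X(k)$), is faithful, is exact (local freeness again) and is tensor-compatible, so $(\Strat(X/k),\otimes,\omega_x)$ is neutral Tannakian by \cite[Thm.~2.11]{DeligneMilne}. For the regular singular variants one checks that $\Strat^{\rs}(X,\overline X)\subseteq\Strat(X/k)$ is closed under $\oplus$, $\otimes$, duals and subquotients: given torsion-free $\D_{\overline X/k}(\log D)$-extensions $\overline E,\overline E'$ of $E,E'$, the sheaves $\overline E\oplus\overline E'$, $\overline E\otimes\overline E'$ and $\mathcal{H}om_{\Oh_{\overline X}}(\overline E,\overline E')$ are extensions of $E\oplus E'$, $E\otimes E'$, $E^{\vee}\otimes E'$; and for a sub- or quotient object $E'$ of $E$ one takes the saturation in $\overline E$ of the corresponding sub- or quotient sheaf, which is $\Oh_{\overline X}$-coherent, torsion-free and $\D_{\overline X/k}(\log D)$-stable. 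Hence $\Strat^{\rs}(X,\overline X)$, and therefore also $\Strat^{\rs}(X/k)=\bigcap_{(X,\overline X)}\Strat^{\rs}(X,\overline X)$, is a neutral (via $\omega_x$) Tannakian category; this is \cite{Kindler/FiniteBundles}.

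\emph{(ii).} The crux is that restriction along a dense open $j\colon U\hookrightarrow X$ is fully faithful on $\Strat(X/k)$, i.e.\ every horizontal section of $\mathcal{H}om(F,F')$ over $U$ extends to $X$. Injectivity being clear ($X$ integral), for surjectivity it suffices to extend such a section across the codimension-one points of $X\setminus U$ and then invoke $S_2$-ness of the vector bundle $\mathcal{H}om(F,F')$ on the normal $X$ to extend over the locus of codimension $\ge 2$. At the discrete valuation ring of a codimension-one point, with uniformizer $t$, write the section as $t^{-n}m$ with $m$ a minimal generator; if $n\ge 1$, applying $\partial_t^{(p^{j})}$ with $p^{j}>n$ produces the term $\bin{-n}{p^{j}}\,t^{-n-p^{j}}m$ of strictly minimal $t$-adic valuation, and $\bin{-n}{p^{j}}$ is a unit in $\F_p$ by Lucas' theorem, contradicting horizontality; so $n\le 0$ and the section extends. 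Thus $j^{*}\colon\Strat(X/k)\to\Strat(U/k)$ is fully faithful; it is exact, and it is closed under subobjects (for $W\subseteq j^{*}F$ the saturation of $j_{*}W\cap F$ inside $F$ is a sub-stratified bundle restricting to $W$). Since $j^{*}$ commutes with $\omega_x$ when $x\in U(k)$, \cite[Prop.~2.21]{DeligneMilne} gives that $\pi^{\Strat}(U,x)\to\pi^{\Strat}(X,x)$ is faithfully flat, and the identical argument with $\D(\log D)$-extensions yields faithful flatness of $\pi^{\rs}(U,x)\to\pi^{\rs}(X,x)$. Finally, restricting $j^{*}$ to $\langle E\rangle_\otimes$ it is moreover essentially surjective onto $\langle E_{\mid U}\rangle_\otimes$ (every object there is a subquotient of a tensor expression in $E_{\mid U}=j^{*}E$, which by closure under subobjects lies in the image), hence an equivalence; this is the isomorphism $\pi(E_{\mid U})\simeq\pi(E)$.

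\emph{(iii).} Passing to the open $V:=X\cap U\subseteq X$ does not decrease the codimension of the complement, so $X\setminus V$ has codimension $\ge 2$ in $X$ and $(V,U)$ is again a partial good compactification (with boundary $D\cap U$). Full faithfulness of $\Strat(X/k)\to\Strat(V/k)$ and of $\Strat^{\rs}(X,\overline X)\to\Strat^{\rs}(V,U)$ is the codimension-$\ge 2$ case of (ii). For essential surjectivity, given $F\in\Strat(V/k)$ with $j\colon V\hookrightarrow X$: since $X$ is normal and $X\setminus V$ has codimension $\ge 2$, $j_{*}F$ is $\Oh_X$-coherent, and $\D_{X/k}$ acts on it by functoriality (a differential operator on $X$ restricts to one on $V$, preserving $F$, hence $j_{*}F$), so $j_{*}F$ is a stratified bundle restricting to $F$. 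If moreover $F$ is $(V,U)$-regular singular with torsion-free $\D_{U/k}(\log D\cap U)$-extension $\overline F$, then, $\overline X\setminus U$ also having codimension $\ge 2$ in $\overline X$, the sheaf $j'_{*}\overline F$ (for $j'\colon U\hookrightarrow\overline X$) is $\Oh_{\overline X}$-coherent, torsion-free and $\D_{\overline X/k}(\log D)$-stable, and restricts over $X$ to $j_{*}F$; hence $j_{*}F\in\Strat^{\rs}(X,\overline X)$. As every $(X,\overline X)$-regular singular bundle restricts to a $(V,U)$-regular singular one, both functors are equivalences.

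\emph{Main obstacle.} The substantive ingredients are the local freeness of $\Oh_X$-coherent $\D_{X/k}$-modules (Frobenius descent), which pervades (i), and the extension of horizontal sections across a codimension-one boundary in (ii); the latter is the only genuine computation and is the heart of the matter, since it is exactly what forces $\pi^{\Strat}(U,x)\to\pi^{\Strat}(X,x)$ to be faithfully flat. The other point deserving care is stability of $\Strat^{\rs}$ under subquotients --- that the saturation of a sub-$\D_{\overline X/k}(\log D)$-module is again $\D_{\overline X/k}(\log D)$-stable --- which is where the torsion-freeness in the definition of ``regular singular'' is used.
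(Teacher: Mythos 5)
The paper offers no proof of this proposition: it is stated as a list of imported facts with pointers to \cite{Saavedra} and \cite{Kindler/FiniteBundles}, so there is no in-text argument to compare yours against. Your writeup is a correct reconstruction of the standard proofs from those references, and it isolates the right two pillars: local freeness via Katz's Frobenius-descent equivalence (Theorem~\ref{thm:katz}) for (i), and the extension of horizontal sections across codimension-one points via the $\partial_t^{(p^j)}$/Lucas computation for (ii) --- your verification that $\bin{-n}{p^j}$ is a $p$-adic unit for $p^j>n\geq 1$ is the correct heart of that step, and combined with closure under subobjects and \cite[Prop.~2.21]{DeligneMilne} it does give faithful flatness and the isomorphism $\pi(E_{\mid U})\simeq\pi(E)$. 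Two small points deserve an extra line if this were to be written out in full. First, in (i) the closure of $\Strat^{\rs}(X,\overline{X})$ under duals: $\mathcal{H}om_{\Oh_{\overline{X}}}(\overline{E},\Oh_{\overline{X}})$ for $\overline{E}$ merely torsion free needs its $\D_{\overline{X}/k}(\log D)$-action justified (e.g.\ via the co-Leibniz formula, or by passing to a locally free $\tau$-extension as in \cite{Kindler/FiniteBundles}); this is exactly where the reference works with locally free logarithmic extensions. Second, in (iii) the coherence of $j'_{*}\overline{F}$ is not automatic for a torsion-free (non-locally-free) $\overline{F}$; one should embed it into the reflexive hull $j'_{*}(\overline{F}^{\vee\vee})$, which is coherent by normality of $\overline{X}$, and use that a quasi-coherent subsheaf of a coherent sheaf on a noetherian scheme is coherent. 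Neither point is a genuine gap, only an elision relative to the level of detail in the cited sources.
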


Let $f:X\to S$ be a smooth morphism of smooth varieties, and let $X^{(i)}=X\times_{S,F_S^i}S$ be the $i$-th Frobenius pullback, where $F_S$ is the absolute Frobenius on $S$. We will denote by $F_{X^{(i)}/S}:X^{(i)}\to X^{(i+1)}$ the relative Frobenius, given by the structure morphism to $S$ and the absolute Frobenius on $X^{(i)}$.

For any $E\in\Strat(X/S)$ we denote by
\[\bar{E}^{(i)}=\{e\in \bar{E}\mid \mathsf{D}(e)=0 \quad\forall \mathsf{D}\in\mathcal{D}_{\overline{X}/S}^{<i}\text{ s.t. }\mathsf{D}(1)=0\},\]
which has a natural structure of $X^{(i)}$-module, moreover by Cartier theorem (see \cite[Thm.~5.1]{Katz/Connections}) the natural map $\sigma_i:F_{X^{(i-1)}/S}^*E^{(i)}\to E^{(i-1)}$ is an isomorphism. The next theorem, fundamentally due to Katz, can be found in \cite{Gieseker/FlatBundles} for the absolute case and \cite{Phung} for the relative one:

\begin{theorem}\label{thm:katz} The assignment $E\mapsto \{E^{(i)},\sigma_i\}$ induces an equivalence of categories  between $\Strat(X/S)$ and the category whose objects are collections $\{E^i,\sigma_i\}$, with $E^i$ $\Oh_{X^{(i)}}$-coherent modules and $\sigma_i:F_{X^{(i-1)}/S}^*E^i\simeq E^{i-1}$, and whose morphisms $\{f_i\}:\{E^i,\sigma_i\}\to\{F^i,\tau_i\}$ are collection of  morphism of $\Oh_{X^{(i)}}$-modules $\{f^i:E^i\to F^i\}$ such that $\tau_i\circ F^*_{X^{(i)}/S}f_{i+1}=f_i\circ \sigma_i$.
\end{theorem}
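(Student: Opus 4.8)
The plan is to deduce Theorem~\ref{thm:katz} from an iterated application of Cartier's theorem on Frobenius descent. Recall that for a smooth morphism $g\colon Y\to S$ in characteristic $p$ the relative Frobenius $F_{Y/S}$ is finite and faithfully flat, and that Cartier's theorem asserts that $V\mapsto(F_{Y/S}^{*}V,\nabla_{\mathrm{can}})$ is an equivalence between the category of $\Oh_{Y^{(1)}}$-coherent sheaves and the category of $\Oh_Y$-coherent sheaves equipped with an integrable $\Oh_S$-linear connection of vanishing $p$-curvature, a quasi-inverse being $(W,\nabla)\mapsto W^{\nabla}$.

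First I would show that $E\mapsto E^{(1)}$ defines an equivalence $\Strat(X/S)\xrightarrow{\sim}\Strat(X^{(1)}/S)$, with quasi-inverse $F_{X/S}^{*}$. If $E\in\Strat(X/S)$, the connection underlying it has vanishing $p$-curvature: in $\D_{X/S}$ one has $(\partial_{x_j}^{(1)})^{p}=p!\,\partial_{x_j}^{(p)}=0$ and $\partial_{x_j}^{[p]}=0$, whence $\nabla_{\partial_{x_j}}^{p}-\nabla_{\partial_{x_j}^{[p]}}=\nabla\bigl((\partial_{x_j}^{(1)})^{p}\bigr)=0$; so by Cartier's theorem $E^{(1)}=E^{\nabla}$ is $\Oh_{X^{(1)}}$-coherent and $\sigma_1\colon F_{X/S}^{*}E^{(1)}\to E$ is an isomorphism. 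Working in local étale coordinates $x_1,\dots,x_d$ one then checks that the operators $\partial_{x_j}^{(p^{a})}$ with $a\geq 1$ commute with all $\partial_{x_{\ell}}^{(1)}$, hence preserve $E^{\nabla}$, and that their restrictions to $E^{\nabla}$, read in the coordinates $y_j=x_j^{p}$ of $X^{(1)}$, are exactly the operators $\partial_{y_j}^{(p^{a-1})}$; as the latter generate $\D_{X^{(1)}/S}$ over $\Oh_{X^{(1)}}$, the sheaf $E^{(1)}$ becomes an object of $\Strat(X^{(1)}/S)$. In the other direction $F_{X/S}^{*}E'$ is given the $\D_{X/S}$-action in which $\partial_{x_j}^{(1)}$ acts by $\nabla_{\mathrm{can}}$ and $\partial_{x_j}^{(p^{a})}$, $a\geq 1$, acts through a Leibniz-type formula built from the action of $\partial_{y_j}^{(p^{a-1})}$ on $E'$; the two functors are mutually quasi-inverse by Cartier's theorem.

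Next, iterating the previous step, one gets an equivalence $\Strat(X/S)\xrightarrow{\sim}\Strat(X^{(i)}/S)$ for every $i$, whose composition with the forgetful functor to $\Oh_{X^{(i)}}$-coherent sheaves sends $E$ to $E^{(i)}$ and whose transition maps are the isomorphisms $\sigma_i\colon F_{X^{(i-1)}/S}^{*}E^{(i)}\to E^{(i-1)}$. It remains to observe that the functor $E\mapsto\{E^{(i)},\sigma_i\}$ of the theorem, which only remembers the $\Oh_{X^{(i)}}$-modules $E^{(i)}$ and the $\sigma_i$, is still an equivalence: the $\D_{X/S}$-module structure on $E=E^{(0)}$ can be recovered from the tower alone, since $\partial_{x_j}^{(1)}$ acts by $\nabla_{\mathrm{can}}$ on $E^{(0)}=F_{X/S}^{*}E^{(1)}$ (via $\sigma_1$), and for $a\geq 1$ the action of $\partial_{x_j}^{(p^{a})}$ is obtained by the Leibniz-type formula from the action of $\partial_{y_j}^{(p^{a-1})}$ on $E^{(1)}$, which is itself reconstructed from $E^{(2)}$ and $\sigma_2$, and so on; the $\D$-module relations, and the fact that one indeed obtains a stratified bundle, follow from their validity at each finite level. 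The bijection on morphisms is the same bookkeeping: a horizontal $\Oh_X$-linear morphism of $\D_{X/S}$-modules is the same datum as a compatible family of $\Oh_{X^{(i)}}$-linear morphisms $E^{(i)}\to F^{(i)}$.

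The step I expect to be the main obstacle is the local computation identifying $\partial_{x_j}^{(p^{a})}|_{E^{\nabla}}$ with $\partial_{y_j}^{(p^{a-1})}$, together with the precise form of the Leibniz-type formula defining the inverse functor on stratified bundles: both rest on the divided-power identity $\partial_{x_j}^{(c)}\partial_{x_j}^{(d)}=\binom{c+d}{c}\partial_{x_j}^{(c+d)}$ and on Lucas' congruence $\binom{ph}{p^{a}}\equiv\binom{h}{p^{a-1}}\pmod{p}$, and must be carried through with care, including the verification of all the defining relations of $\D_{X/S}$ and the coherence of $E^{(1)}$ over $\Oh_{X^{(1)}}$. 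Passing from $S=\Spec k$ to a general smooth base $S$ requires nothing new: the absolute Frobenius is replaced everywhere by the relative Frobenius $F_{X^{(i)}/S}$, all differential operators are taken $\Oh_S$-linear, and Cartier's theorem as well as the divided-power formalism hold verbatim in that setting.
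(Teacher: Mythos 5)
Your argument is correct and is the standard iterated Cartier--Frobenius-descent proof of this equivalence (vanishing $p$-curvature of the underlying connection, identification of $\partial_{x_j}^{(p^a)}|_{E^{\nabla}}$ with $\partial_{y_j}^{(p^{a-1})}$, reconstruction of the $\D_{X/S}$-action from the tower via the canonical stratification on $F_{X/S}^{*,i}E^{(i)}$ up to order $p^i$). The paper does not prove this theorem but refers to \cite{Gieseker/FlatBundles} and \cite{Phung}, where essentially this same argument is carried out, so your proposal matches the intended proof.
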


\section{A little bit of representation theory}\label{sec:repthy}
Let $L\xrightarrow{a} G\xrightarrow{b} H$ be a sequence of affine group schemes over $k$, and let
\[\Repf_k(H)\xrightarrow{B}\Repf_k(G)\xrightarrow{A}\Repf_k(L)\]
be the induced functors on the categories of finite dimensional representations over $k$. We recall here a very useful criterion to detect from the latter exacntess properties of the first:
\begin{proposition}\label{prop:crit} Keeping the same notation as above, the following holds:
\begin{itemize}
\item[i)]\emph{(see \cite[Prop.~2.21,a]{DeligneMilne})} The map $b$ is faithfully flat if and only if the functor $B$ is fully faithful and its essential image is closed under subobjects in $\Repf_k(G)$.
\item[ii)]\emph{(see \cite[Prop.~2.21,b]{DeligneMilne})} The map $a$ is a closed immersion if and only if the essential image of $A$ spans $\Repf_k(L)$, that is for every $V\in\Repf_k(L)$ there exists $W\in\Repf_k(G)$ such that $V\in\langle A(W)\rangle_\otimes$.
\item[iii)]\emph{(see \cite[Thm.~A1,iii]{EsnaultSun})} If $b$ is faithfully flat and $a$ is a closed immersion, then the sequence $L\xrightarrow{a} G\xrightarrow{b} H$ is exact if and only if the following conditions hold:
\begin{itemize}
\item[a)] $\ker a$ is the normal hull of $L$, that is for every $V\in\Repf_k(G)$ we have that $A(V)$ is trivial if and only if $V$ is in the essential image of $B$.
\item[b)] for every $V\in\Repf_k(G)$ if $A(V)^0\subset A(V)$ denotes it maximal trivial subrepresentation, then there exists $V'\subset V$ so that $A(V)^0\simeq A(V')$.
\item[c)] for every $V\in\Repf_k(L)$ there exists $W\in\Repf_k(G)$ such that $V\subset A(W)$ or such that $V$ is a quotient of $A(W)$.

\end{itemize}
\end{itemize}

\end{proposition}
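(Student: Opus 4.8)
The plan is to read each of the three assertions through Tannaka duality, using two basic translations: a homomorphism of affine $k$-group schemes is faithfully flat precisely when the induced map of coordinate Hopf algebras is injective, and is a closed immersion precisely when that map is surjective. Throughout I write $\omega_G,\omega_H,\omega_L$ for the forgetful fibre functors, so that $\omega_G\circ B=\omega_H$, $\omega_L\circ A=\omega_G$, and $G\simeq\aut(\omega_G)$, $H\simeq\aut(\omega_H)$, $L\simeq\aut(\omega_L)$.

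For part (i) the ``only if'' direction is the standard description of a faithfully flat quotient $b\colon G\twoheadrightarrow H$: it identifies $\Repf_k(H)$ with the full subcategory of $\Repf_k(G)$ of representations on which $\ker b$ acts trivially, which is visibly closed under subobjects. Conversely, if $B$ is fully faithful with essential image $\mathcal C$ closed under subobjects, then $\mathcal C$ is a Tannakian subcategory of $\Repf_k(G)$ closed under subquotients, so Tannaka reconstruction attaches to it a faithfully flat quotient $G\twoheadrightarrow G_{\mathcal C}$ with $\Repf_k(G_{\mathcal C})\xrightarrow{\ \sim\ }\mathcal C$; since $B$ also realizes $\Repf_k(H)\xrightarrow{\ \sim\ }\mathcal C$ compatibly with the fibre functors, $H$ and $G_{\mathcal C}$ are canonically identified, the quotient map going over to $b$. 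This is \cite[Prop.~2.21,a]{DeligneMilne}, which I would simply cite. For part (ii) I would let $\overline{a(L)}\hookrightarrow G$ be the scheme-theoretic image of $a$, note that $a$ is a closed immersion if and only if $L\to\overline{a(L)}$ is an isomorphism, and observe that $\Repf_k\big(\overline{a(L)}\big)$ is the Tannakian subcategory of $\Repf_k(L)$ generated under $\otimes$, duals, sums and subquotients by the objects $A(W)$, $W\in\Repf_k(G)$; hence $a$ is a closed immersion exactly when these span $\Repf_k(L)$, which is \cite[Prop.~2.21,b]{DeligneMilne}.

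Part (iii) carries the real content and I would take it from \cite[Thm.~A1,iii]{EsnaultSun}; its shape is as follows. Granting (i) and (ii), exactness of $L\xrightarrow{a}G\xrightarrow{b}H$ comes down to the single equality $a(L)=\ker b$ inside $G$, the normality of $\ker b$ and the identification $H=G/\ker b$ being automatic. Condition (a) translates --- using that a closed normal subgroup of an affine group scheme is the intersection of the kernels of the representations it fixes --- into the statement that the normal hull of $a(L)$ in $G$ equals $\ker b$. Condition (b) is the representation-theoretic avatar of $a(L)$ being normal in $G$: when $L\trianglelefteq G$ one takes $V'=V^{L}$, which is then automatically a $G$-subobject. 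Condition (c) strengthens the spanning condition of (ii) to exactly what forces $L\to\ker b$ to be not merely faithfully flat but an isomorphism. Putting (a), (b), (c) together gives that $a(L)$ is normal with normal hull $\ker b$, hence $a(L)=\ker b$; and each of the three conditions is conversely immediate from exactness.

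The step I expect to be the main obstacle is exactly this part (iii): producing the equality $a(L)=\ker b$ from the hypotheses \emph{without} already having the short exact sequence, hence without knowing a priori that $G/L$ is affine (which fails for non-normal $L$, e.g. a Borel in a reductive group). This is why one cannot simply invoke \cite[Prop.~2.21]{DeligneMilne}, which presupposes the sequence, and why the ``sub \emph{or} quotient'' formulation in (c) is indispensable: it is the weakest form of the condition that an argument with (co)induced representations can supply in the absence of that affineness, while still being strong enough to pin down $L\xrightarrow{\ \sim\ }\ker b$.
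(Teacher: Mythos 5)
Your proposal is correct and matches the paper's treatment: the paper offers no proof of this proposition, relying entirely on the citations to \cite[Prop.~2.21]{DeligneMilne} for (i)--(ii) and \cite[Thm.~A1,iii]{EsnaultSun} for (iii), which is exactly what you ultimately do, with your Tannakian glosses of each condition being accurate. Nothing further is needed.
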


\section{The maximal sub-stratified bundle}\label{sec:max}
Let $\bar{E}$ be a $\Oh_{\overline{X}}$-coherent $\D_{\overline{X}/S}(\log D)$-module on some smooth variety $\overline{X}$ with log structure given by a snc divisor $D$ and with $\overline{X}\to S$ a smooth morphism. We want to find a useful description of the maximal subobject of $\bar{E}$ which is actually a $\D_{\overline{X}/S}$-module.
Similarly as in Theorem~\ref{thm:katz} we define 
\[\bar{E}^{(i)}=\{e\in \bar{E}\mid \mathsf{D}(e)=0 \quad\forall \mathsf{D}\in\mathcal{D}_{\overline{X}/S}^{<i}(\log D)\text{ s.t. }\mathsf{D}(1)=0\},\]
then they are naturally $\Oh_{X^{(i)}}$-modules. Note that if $\bar{E}$ is torsion free then $E^{(i)}$ is torsion free for every $i$. 

We will use the following notation: if $k\leq i\in \mathbb{N}$ we will denote
\[F_{X/S,i}^{*,k}=F_{X^{(i-k)}/S}^*\circ\dotsc \circ F_{X^{(i-2)}/S}^*\circ F_{X^{(i-1)}/S}^*.\]
\begin{lemma}\label{lem:inj} Assume that $\bar{E}$ is torsion free, then for every $k\leq i$, the natural morphism $F_{X/S,i}^{*,k}(\bar{E}^{(i)})\to \bar{E}^{(i-k)}$ is injective. 
\end{lemma}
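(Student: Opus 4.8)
The plan is to reduce the claim to the case $k=1$ and then argue locally using Cartier descent and the torsion-freeness hypothesis. First I would observe that the morphism $F_{X/S,i}^{*,k}(\bar E^{(i)})\to \bar E^{(i-k)}$ factors as a composite
\[
F_{X/S,i}^{*,k}(\bar E^{(i)}) = F_{X/S,i-1}^{*,k-1}\bigl(F_{X^{(i-1)}/S}^*\bar E^{(i)}\bigr)\longrightarrow F_{X/S,i-1}^{*,k-1}(\bar E^{(i-1)})\longrightarrow \bar E^{(i-k)},
\]
where the first arrow is obtained by applying the exact(?) functor $F_{X/S,i-1}^{*,k-1}$ to the $k=1$ map $F_{X^{(i-1)}/S}^*\bar E^{(i)}\to \bar E^{(i-1)}$ and the second is the $k-1$ case of the statement (applied to the log-stratified data on $X^{(i-1)}$, which is again torsion free). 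Since a composite of injections is injective, it suffices to treat $k=1$ and to check that pulling back an injection along a relative Frobenius preserves injectivity in our situation. The latter is not automatic for an arbitrary morphism, so this is where torsion-freeness must enter: $F_{X^{(i-1)}/S}$ is a finite flat morphism (locally free of rank $p^{\dim X/S}$ over $S$-smooth $\overline X$), hence $F^*$ is exact, and then the pullback of an injection of $\Oh$-modules is again an injection. One should double-check that $\bar E^{(i)}$, $\bar E^{(i-1)}$ are coherent so that the functor in question is genuinely flat pullback rather than just the naive one; torsion-freeness of $\bar E$ ensures each $\bar E^{(j)}$ is torsion free and in particular that no unwanted kernel appears.

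So the heart of the matter is the case $k=1$: the natural map $\sigma\colon F_{X^{(i-1)}/S}^*\bar E^{(i)}\to \bar E^{(i-1)}$ is injective. Here I would work locally on $\overline X$ in étale coordinates $x_1,\dots,x_d$ adapted to $D$ (say $D=\{x_1\cdots x_l=0\}$), so that $\D_{\overline X/S}^{<1}(\log D)$ is spanned over $\Oh$ by $x_m\partial_{x_m}^{(1)}$ for $m\le l$ and $\partial_{x_m}^{(1)}$ for $m>l$, i.e. by the logarithmic derivations $\theta_m$. By definition $\bar E^{(1)}$ (relative to the local chart, for $i=1$; the general $i$ follows by base-changing along $F_S^{i-1}$) is the joint kernel of these operators — or rather of the operators killing $1$, which after subtracting the $\Oh$-linear part are precisely the $\theta_m$. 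The map $\sigma$ sends $\sum f_J\otimes e_J$ (with $e_J$ a local basis of $\bar E^{(i)}$ and $f_J$ functions) to $\sum f_J^{1/p}\!\cdot$(something)\dots more precisely $\sigma$ is the canonical evaluation, and one checks on the associated graded that $\sigma$ is injective whenever $\bar E^{(i-1)}$ is torsion free: an element of the kernel would be supported on the ramification locus of $F$, which is the divisor $D$ union the relative critical locus — but smoothness of $\overline X/S$ kills the latter, and torsion-freeness handles the $D$ part.

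I expect the main obstacle to be the $k=1$ injectivity statement at the logarithmic points, i.e. along $D$: away from $D$ this is just ordinary Cartier descent (Theorem~\ref{thm:katz} already gives an isomorphism there), and the content is to see that nothing degenerates along the boundary. The clean way to phrase this is probably: $F_{X^{(i-1)}/S}$ restricted away from $D$ is an isomorphism onto its image modulo Frobenius twist — no, rather, $F_{X^{(i-1)}/S}$ is finite flat everywhere (being Frobenius of an $S$-smooth scheme), so $F^*$ is exact, and then the kernel of $\sigma$ is an $\Oh_{X^{(i-1)}}$-submodule that vanishes after restriction to $X^{(i-1)}\setminus D^{(i-1)}$ by Cartier's theorem, hence is supported on $D^{(i-1)}$; since $\bar E^{(i-1)}$ is torsion free this kernel, being the image of a map into $\bar E^{(i-1)}$\dots wait, the kernel of $\sigma$ sits in the source, not the target. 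So instead one argues: $F^*\bar E^{(i)}$ is torsion free (pullback of torsion free along flat with torsion-free-preserving fibers, using that $\overline X/S$ is smooth hence geometrically reduced), a torsion-free sheaf that is generically injected into $\bar E^{(i-1)}$ via a map that is generically an isomorphism has trivial kernel, because the kernel would be a torsion(-free but generically zero) subsheaf, hence zero. That last reduction — torsion free plus generically zero implies zero — is the key small lemma, and verifying that $F^*$ of a torsion-free sheaf stays torsion free in this relative setting is the one point requiring a little care.
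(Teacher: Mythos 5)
Your proposal is correct and follows essentially the same route as the paper: reduce to the one-step case $k=1$ using exactness of pullback along the (finite flat) relative Frobenius, then combine Cartier's theorem on the open locus $X$ where the log structure is trivial with torsion-freeness to kill the kernel, which is supported on $D$. The only cosmetic difference is that the paper applies flat pullback to the injection $\bar{E}^{(1)}\hookrightarrow i_*i^*\bar{E}^{(1)}$ (so it only needs torsion-freeness of $\bar{E}^{(1)}$ itself), whereas you pull back first and then need torsion-freeness of $F^*\bar{E}^{(1)}$ --- the small extra verification you correctly flag, and which the paper's ordering sidesteps.
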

\begin{proof} It is enough to do it for $k=i=1$, as for smooth morphisms the relative Frobenius is faithfully flat.
Let $j:X\subset \overline{X}$ be the immersion of the open where the log structure is trivial, and similarly for $i:X^{(1)}\subset \smash{\overline{X}}^{(1)}$. The natural map $F_{\overline{X}/S}^*(\bar{E}^{(1)})\to \bar{E}$ sits in the commutative diagram
\[
\xymatrix{
F_{\overline{X}/S}^* (\bar{E}^{(1)})\ar[r]\ar[d]& \bar{E}\ar[d]\\
j_* j^* F_{\overline{X}/S}^* (\bar{E}^{(1)})=F_{\overline{X}/S}^*i_*i^*(\bar{E}^{(1)})\ar[r] & j_*j^*\bar{E}
}.\]
Now, $j^*F_{X/S}^*=F_{U/S}^*i^*$ and $j^*\bar{E}=E$ is a stratified bundle, hence by Theorem~\ref{thm:katz}
\[j^*F_{X/S}^*(\bar{E}^{(1)})=F_{U/S}^* E^{(1)}\to j^* \bar{E}=E\]
 is an isomorphism, and so is the lower line of the diagram. Hence it is enough to prove that 
\[\bar{E}^{(1)}\to i_*i^* \bar{E}^{(1)}\]
is injective, which follows from the fact that $\bar{E}^{(1)}$ is torsion free.
\end{proof}

Let $\bar{E}$ be a $\D_{\overline{X}/S}(\log D)$-module. By the previous lemma we can identify $F_{X/S,i}^{*,k}(\bar{E}^{(i)})$ with its image in $\bar{E}^{(i-k)}$. We define the stratified bundle $\bar{E}_m$ in the following way: 
\[\bar{E}_m^{(i)}=\bigcap_{k\in\mathbb{N}} F_{X/S,i+k}^{k,*}(\bar{E}^{(i+k)})\subset E^{(i)}.\]
By Lemma~\ref{lem:inj}, it is clear that the natural morphism $\sigma_i:F_{X/S}^*\bar{E}_m^{(i)}\to \bar{E}_m^{(i-1)}$ is an isomorphism, therefore by Theorem~\ref{thm:katz} $\{\bar{E}_m^{(i)},\sigma_i\}$ gives a stratified bundle $\bar{E}_m$ on $\bar{X}$, that we call the \emph{maximal sub-stratified bundle} of $\bar{E}$. The definition is justified by the following

\begin{lemma}\label{lem:max}
Let $\bar{E}_m\subset \bar{E}$ defined as above, then $\bar{E}_m$ is the maximal sub-$\D_{\bar{X}/S}$-module of $\bar{E}$, that is the maximal subobject on which the $\D_{\bar{X}/S}(\log D)$-action extends to a $\D_{\bar{X}/S}$-action. In particular for every open $V\subset \overline{X}$ containing the generic points of every component of $D$, we have that
\[(\bar{E}_m)_{|V}=(\bar{E}_{|V})_m.\]

\end{lemma}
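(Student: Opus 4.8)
The plan is to establish the two assertions of Lemma~\ref{lem:max} separately, both ultimately by unwinding the explicit description of $\bar{E}_m$ in terms of the Frobenius descent data $\{\bar{E}_m^{(i)},\sigma_i\}$ provided by Theorem~\ref{thm:katz}. First I would verify that $\bar{E}_m$ really is a sub-$\D_{\bar{X}/S}$-module of $\bar{E}$: by construction $\bar{E}_m^{(i)}=\bigcap_k F_{X/S,i+k}^{k,*}(\bar{E}^{(i+k)})\subset \bar{E}^{(i)}$, and the point is that the transition maps $\sigma_i$ of $\bar{E}$ restrict to isomorphisms $F_{\bar{X}/S}^*\bar{E}_m^{(i)}\xrightarrow{\sim}\bar{E}_m^{(i-1)}$. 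Injectivity of $F_{\bar{X}/S}^*\bar{E}_m^{(i)}\to\bar{E}^{(i-1)}$ follows from Lemma~\ref{lem:inj}; for surjectivity onto $\bar{E}_m^{(i-1)}$ one uses that $F_{\bar{X}/S}^*$ is exact (flat) so it commutes with the intersection defining $\bar{E}_m^{(i-1)}$, together with the fact that $F_{\bar{X}/S}^*F_{X/S,i+k}^{k,*}(\bar{E}^{(i+k)})=F_{X/S,i+k}^{k+1,*}(\bar{E}^{(i+k)})$ sits inside $\bar{E}^{(i-1)}$ via $\sigma_{i-1}$; reindexing shows the image is exactly $\bar{E}_m^{(i-1)}$. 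Thus $\{\bar{E}_m^{(i)},\sigma_i\}$ is an object of the category in Theorem~\ref{thm:katz}, i.e. a genuine $\D_{\bar{X}/S}$-module, and it is a subobject of $\bar{E}$ as a $\D_{\bar{X}/S}(\log D)$-module.

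Next I would prove maximality. Suppose $\bar{F}\subset\bar{E}$ is any $\Oh_{\bar{X}}$-coherent sub-$\D_{\bar{X}/S}(\log D)$-module whose $\log$-action extends to a full $\D_{\bar{X}/S}$-action; we want $\bar{F}\subset\bar{E}_m$. Forming the $i$-th Frobenius descent pieces $\bar{F}^{(i)}\subset\bar{E}^{(i)}$, the hypothesis that $\bar{F}$ is a $\D_{\bar{X}/S}$-module means, via Theorem~\ref{thm:katz}, that for every $i,k$ the natural map $F_{X/S,i+k}^{k,*}(\bar{F}^{(i+k)})\to\bar{F}^{(i)}$ is an isomorphism; in particular $\bar{F}^{(i)}\subset F_{X/S,i+k}^{k,*}(\bar{E}^{(i+k)})$ for all $k$ (using functoriality of the descent pieces with respect to the inclusion $\bar{F}\subset\bar{E}$ and the identifications from Lemma~\ref{lem:inj}). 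Intersecting over $k$ gives $\bar{F}^{(i)}\subset\bar{E}_m^{(i)}$ for all $i$, hence $\bar{F}\subset\bar{E}_m$. The only subtlety here is compatibility of all these inclusions with the various $\sigma$'s, which is just naturality of the Cartier isomorphism; I'd make sure the diagram-chase is spelled out at $i=k=1$ and then invoke faithful flatness of relative Frobenius to reduce the general case, exactly as in the proof of Lemma~\ref{lem:inj}.

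Finally, for the localization statement, let $V\subset\bar{X}$ be an open containing the generic points of all components of $D$. The restriction functor from $\Oh_{\bar{X}}$-coherent $\D_{\bar{X}/S}(\log D)$-modules to the analogous category over $V$ is compatible with forming the Frobenius descent pieces $\bar{E}^{(i)}$ (both are defined by annihilation under $\D^{<i}(\log D)$-operators killing $1$, a local condition), and restriction is flat, hence exact, so it commutes with the intersection defining $\bar{E}_m$. Therefore $(\bar{E}_m)_{|V}$ and $(\bar{E}_{|V})_m$ have the same descent pieces and transition maps, so they agree by Theorem~\ref{thm:katz}. Alternatively, and perhaps more cleanly, one can deduce this from the maximality characterization together with the observation that a sub-$\D_{\bar{X}/S}(\log D)$-module of $\bar{E}_{|V}$ extends to a $\D_{V/S}$-module if and only if it does so in a neighborhood of the generic points of $D\cap V$, since over $V\setminus D$ there is nothing to check. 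I expect the main obstacle to be the bookkeeping in the surjectivity/compatibility part of the first step — making sure that $F_{\bar{X}/S}^*$ applied to the infinite intersection $\bar{E}_m^{(i)}$ lands correctly in $\bar{E}_m^{(i-1)}$ and not in something strictly larger — but this is handled by the flatness of Frobenius and careful reindexing rather than by any new idea.
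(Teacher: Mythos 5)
Your overall route is the same as the paper's: realize $\bar{E}_m$ as a stratified bundle through its Frobenius descent data and prove maximality by comparing the descent pieces of a candidate $\bar{F}$ with the intersections $\bigcap_k F_{X/S,i+k}^{k,*}(\bar{E}^{(i+k)})$. The one step that is not yet a proof is the sentence ``the hypothesis that $\bar{F}$ is a $\D_{\bar{X}/S}$-module means, via Theorem~\ref{thm:katz}, that $F_{X/S,i+k}^{k,*}(\bar{F}^{(i+k)})\to\bar{F}^{(i)}$ is an isomorphism.'' There are two different candidates for $\bar{F}^{(i)}$: the set cut out by the operators of $\D_{\overline{X}/S}^{<i}(\log D)$ killing $1$ (this is the one that visibly sits inside $\bar{E}^{(i)}$, which is defined with logarithmic operators) and the set cut out by all operators of $\D_{\overline{X}/S}^{<i}$ killing $1$ (this is the one to which Cartier's theorem and Theorem~\ref{thm:katz} apply). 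Your appeal to ``functoriality'' and to Lemma~\ref{lem:inj} does not reconcile the two, and reconciling them is exactly what the paper's proof consists of: locally any $\mathsf{D}\in\D_{\overline{X}/S}^{<i}$ can be multiplied by a function $a$ so that $a\mathsf{D}\in\D_{\overline{X}/S}^{<i}(\log D)$, hence an $e$ killed by the logarithmic operators satisfies $a\mathsf{D}(e)=0$, and torsion freeness of $\bar{F}\subset\bar{E}$ forces $\mathsf{D}(e)=0$; so the two sets coincide and Cartier applies. (Your argument can also be repaired more cheaply by running it entirely with the pieces defined by the full $\D_{\overline{X}/S}$-action, for which the inclusion into $\bar{E}^{(i)}$ is the trivial direction because $\D_{\overline{X}/S}(\log D)\subset\D_{\overline{X}/S}$; but as written you have not committed to either definition, and the crux of the lemma sits precisely there.)

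The rest of the proposal is fine and matches the paper: the verification that $\{\bar{E}_m^{(i)},\sigma_i\}$ is a Frobenius descent datum is carried out, at the same level of detail, in the paragraph preceding the lemma, and your direct computation for the localization statement — restriction to an open commutes with forming the descent pieces and with the intersections — is a legitimate, slightly more explicit, substitute for the paper's appeal to Proposition~\ref{prop:prop}~(iii).
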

\begin{proof}
The last part of the theorem follows from Proposition~\ref{prop:prop} $(iii)$.

For the first, let $\bar{E}'\subset \bar{E}$ be any sub-$\D_{\bar{X}/S}(\log D)$-module such that the action extends to a $\D_{\bar{X}/S}$-module action. 
A priori there are two different sets:
\[\bar{E}'^{(i)}=\{e\in \bar{E}'\mid \mathsf{D}(e)=0 \quad\forall \mathsf{D}\in\mathcal{D}_{\overline{X}/S}^{<i}(\log D)\text{ s.t. }\mathsf{D}(1)=0\}\]
and 
\[\bar{E}'^{(i)}_{\D}=\{e\in \bar{E}'\mid \mathsf{D}(e)=0 \quad\forall \mathsf{D}\in\mathcal{D}_{\overline{X}/S}^{<i}\text{ s.t. }\mathsf{D}(1)=0\},\]
but if $\mathsf{D}\in\mathcal{D}_{\overline{X}/S}^{<i}$, then locally there exists a $\Oh_{\bar{X}}$ section $a$ such that $a\mathsf{D}\in\mathcal{D}_{\overline{X}/S}^{<i}(\log D)$. Hence if $e\in\bar{E}'^{(i)}$, we have that $(a\mathsf{D})(e)=a (\mathsf{D}(e))=0$, hence $\mathsf{D}(e)=0$. In particular, $\bar{E}'^{(i)}=\bar{E}_{\D}'^{(i)}$.
By Cartier theorem (\cite[Thm.~5.1]{Katz/Connections}) for every $k\leq i\in\mathbb{N}$
\[F_{X/S,i}^{k,*}(\bar{E}'^{(i)})\to \bar{E}'^{(i-k)}\]
is an isomorphism and, by the very definition of $\bar{E}_m$, one has that $\bar{E}'^{(i)}\subset\bar{E}_m^{(i)}$, and $\bar{E}'\subset \bar{E}_m$.
\end{proof}

We want now to switch the prospective, that is we want to consider a regular singular stratified bundle $E$ on $X$ and understand its maximal subobject $E_m$ that extends to a stratified bundle on $\overline{X}$.
\begin{remark}
To be completely formal, one should write $E_{m,\overline{X}}$, or assume that $\overline{X}$ is proper, as the stratified bundle $E_m$ depends indeed on the partial compactification $X\subset \overline{X}$ we choose. Nevertheless, as we have fixed $\overline{X}$ we will keep the notation as simple as possible.
\end{remark}

 It is immediate to see that if $\bar{E}$ is a logarithmic extension of $E$ to $\overline{X}$ then $E_m$ is equal to the restriction to $X$ of $\bar{E}_m$.

Our goal is now to prove that assigning $E\mapsto E_m$ and restricting to the closed fibers of the morphism $f:X\to S$ commute. That is, we want to prove that for every $s\in S(k)$ and $E\in\Strat^{\rs}(X/S)$  the natural inclusion $(E_m)_s\subset (E_s)_m$ is actually an isomorphism. In order to do this we need to briefly review the theory of the exponents of a regular singular stratified bundle, developed in \cite{Gieseker/FlatBundles} and in \cite{Kindler/FiniteBundles}.

Let hence $E$ be a regular singular stratified bundle and assume that $\overline{X}=\Spec A$, $X=\Spec A[x_1^{-1}]$ and $D=\{x_1=0\}$ and that we can complete $x_1$ to a system of \'etale coordinates $x_1,\dots,x_n$. Then (see \cite[Lemma~3.8]{Gieseker/FlatBundles} or \cite[Prop.~3.10]{Kindler/FiniteBundles}) if $\bar{E}$ is any torsion free coherent logarithmic extension of $E$, then $\bar{E}_{|D_1}$ decomposes as a direct sum 
\[\bigoplus_{\alpha\in \mathbb{Z}_p}V_{\alpha}\]
such that every operator in the kernel of $i^*\D_{\overline{X}/k}(\log D)\to i^*\D_{\overline{X}/k}$ acts as multiplication by $\bin{\alpha}{n}$ on $V_{\alpha}$. Otherwise said, there exist elements $e_1,\dotsc e_r$ in $\overline{E}$ such that their restriction to $D$ spans $\bar{E}_{\mid D}$ and such that 
\[x_1^n\partial_{x_1}^{(n)}(e_i)=\bin{\alpha_i}{n} e_i + x_1\cdot \overline{E},\]
 for some $\alpha_i\in\mathbb{Z}_p$. The $\alpha_i$  are called the \emph{exponents} of $\bar{E}$ along $D$, they are well defined elements in $\mathbb{Z}_p$ and if $\bar{E}$ and $\bar{E}'$ are two different torsion free logarithmic extensions, their exponents differ by integers. In particular, we can talk about exponents of $E$ along $D$, as well defined elements in $\mathbb{Z}_p/\mathbb{Z}$. In general, if $D$ has different irreducible components, the exponents of $E$ along $D$ is the union of the exponents along the single irreducible components.
Moreover (see \cite[Thm.~5.2]{Kindler/FiniteBundles}), if $(X,\overline{X})$ is a partial good compactification and if $\tau:\mathbb{Z}_p/\mathbb{Z}\to\mathbb{Z}_p$ is a section of the quotient map, there exists a \emph{locally free} logarithmic extension $\bar{E}_\tau$ of $E$, called \emph{$\tau$-extension}, such that the exponents of $\bar{E}_\tau$ lie in the image of $\tau$ and which is unique up to isomorphisms that restrict to the identity on $E$.
\begin{remark} The previous theory does not make use of the full $\D$-module structure, but only of the operators that are in the kernel of $i^*\D_{\overline{X}/k}(\log D)\to i^*\D_{\overline{X}/k}$, which means that one can make sense of the whole theory also for the relative case, if all irreducible components of $D$ are dominant and smooth over the base $S$ and if $E\in\Strat^{\rs}(X/S)$ admits a coherent torsion free $\D_{\overline{X}/S}(\log D)$ extension. Similarly, the next lemmas can be extended to the relative case, with some extra conditions on the divisors at infinity.
\end{remark}
%In order to abbreviate slighlty the following theorems we use the following
%\begin{definition}
%Let $X\to S$ be a proper morphism and $X\subset \overline{X}$ be a partial good compactification over $S$. Let $D'$ be an irreducible component of $(\overline{X}\backslash X)_{\red}$, then we say that a point $s\in\S(k)$ is \emph{good} with respect to $s$ if 
%\begin{itemize}

%\end{itemize}
%\end{definition}

If the hypotheses of the next two results appear too technical, the reader may want to keep in mind that we will be mainly interested in the case where $X=Y\times S$ and $X\to S$ is the projection, while $\overline{X}=\overline{Y}\times S$ with $Y\subset\overline{Y}$ a good partial compactification (and then $s$ can be taken as any rational point of $S$).

\begin{lemma}\label{lem:exp} Let $f:X\to S$ be a smooth morphism of smooth connected varieties and let $E\in\Strat^{\rs}(X/k)$. Fix a partial good compactification $X\subset \overline{X}$ relative to $S$ and let $D=(\overline{X}\backslash X)_{\red}$. Fix an irreducible component $D'$ of $D$ and let $s\in S(k)$ such that $\overline{X}_s$ intersects the smooth locus of $D'$ non-trivially and such that $D'_s$ is a smooth divisor in $\overline{X}_s$. Let $\bar{E}$ be a torsion free $\Oh_{\overline{X}}$-coherent extension of $E$, then the exponents of $\overline{E}$ along $D'$ are exactly the exponents of $\bar{E}_s$ along $D'_s$. In particular, if $\overline{X}_s=\overline{X}\times_{S} s$ is a partial good compactification of $X_s$, $(\bar{E}_\tau)_s$ is a $\tau$-extension of $E_s$.
\end{lemma}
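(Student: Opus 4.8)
The plan is to reduce the statement to a local computation along the divisor, using that the exponents are defined by the action of the "residue" operators $x_1^n\partial_{x_1}^{(n)}$ on the restriction $\bar E_{|D'}$, and that restriction to $D'$ commutes with the further restriction to the fiber $\overline X_s$. First I would reduce to the situation spelled out just before Lemma~\ref{lem:exp}, namely $\overline X = \Spec A$, $X = \Spec A[x_1^{-1}]$, $D' = \{x_1 = 0\}$, with $x_1$ completed to a system of étale coordinates $x_1,\dots,x_n$; since the exponents along $D'$ are computed Zariski-locally near the generic point of $D'$ and the hypotheses guarantee that $\overline X_s$ meets the smooth locus of $D'$ and that $D'_s$ is smooth in $\overline X_s$, after shrinking $\overline X$ around a suitable point of $D'\cap \overline X_s$ we may assume $x_1,\dots,x_n$ restrict to étale coordinates on $\overline X_s$ with $D'_s = \{x_1 = 0\}$ inside it. Here $s\in S(k)$ and the fiber is cut out by the vanishing of pulled-back functions from $S$, so the operators $x_1^n\partial_{x_1}^{(n)}$, being $\Oh_S$-linear relative differential operators, descend to the analogous operators on $\overline X_s$.

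Next I would invoke the structural description recalled from \cite[Lemma~3.8]{Gieseker/FlatBundles} and \cite[Prop.~3.10]{Kindler/FiniteBundles}: pick elements $e_1,\dots,e_r\in\bar E$ whose restrictions to $D'$ span $\bar E_{|D'}$ and satisfy $x_1^n\partial_{x_1}^{(n)}(e_i) = \binom{\alpha_i}{n} e_i + x_1\cdot \bar E$ for the exponents $\alpha_i\in\Z_p$. Restricting this identity to $\overline X_s$ — that is, applying $-\otimes_{\Oh_S}k(s)$, equivalently reducing modulo the ideal of $\overline X_s$ — and using that $x_1^n\partial_{x_1}^{(n)}$ is compatible with this base change, one gets $x_1^n\partial_{x_1}^{(n)}((e_i)_s) = \binom{\alpha_i}{n}(e_i)_s + x_1\cdot \bar E_s$ in $\bar E_s$. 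The key remaining point is that the $(e_i)_s$ still span $(\bar E_s)_{|D'_s}$: this holds because $(\bar E_s)_{|D'_s} = (\bar E_{|D'})_s$ (restriction to the divisor and restriction to the fiber commute, both being pullbacks along flat or at least well-behaved maps on this open chart), and by hypothesis $\bar E$ is torsion free (hence flat enough) so restriction to $D'$ of a generating set restricts to a generating set of the fiber; Nakayama over the local ring at the relevant point of $D'_s$ finishes this. By the same structural theorem applied now to $\bar E_s$ on $\overline X_s$, these relations identify the $\alpha_i$ (mod nothing — as genuine elements of $\Z_p$, not just mod $\Z$) with the exponents of $\bar E_s$ along $D'_s$, which is the first assertion.

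For the "in particular" clause: when $\overline X_s$ is a partial good compactification of $X_s$, the $\tau$-extension $\bar E_\tau$ has all exponents along every component of $D$ in the image of $\tau$, and by the first part of the lemma, applied to each component $D'$ meeting $\overline X_s$ appropriately, $(\bar E_\tau)_s$ has exponents along $D_s$ equal to those of $\bar E_\tau$ along $D$, hence still in $\tau(\Z_p/\Z)$. Moreover $\bar E_\tau$ is locally free, so $(\bar E_\tau)_s$ is locally free on $\overline X_s$, and it is visibly a logarithmic extension of $E_s = (X_s,\overline X_s)$-regular singular; so $(\bar E_\tau)_s$ satisfies the defining properties of a $\tau$-extension of $E_s$. (By the uniqueness in \cite[Thm.~5.2]{Kindler/FiniteBundles} it is then \emph{the} $\tau$-extension up to isomorphism restricting to the identity on $E_s$, which is all that is claimed.) I expect the main obstacle to be the bookkeeping of the reduction to the local chart — checking that after shrinking one still has an étale coordinate system both on $\overline X$ and on $\overline X_s$ with $D'$ and $D'_s$ in the required normal form, and verifying carefully the base-change compatibility $(\bar E_{|D'})_s = (\bar E_s)_{|D'_s}$ together with the generation statement; the representation-theoretic/exponent extraction itself is then a direct transcription of the already-recalled theory.
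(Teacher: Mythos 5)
Your proposal is correct and follows essentially the same route as the paper: localize around the generic point of $D'_s$ inside the smooth locus of $D'$, use that restriction to the divisor commutes with restriction to the fiber, and read off the exponents from the structure theorem of \cite[Lemma~3.8]{Gieseker/FlatBundles}/\cite[Prop.~3.10]{Kindler/FiniteBundles} together with the fact that $\bar{x}_1^n\partial_{\bar{x}_1}^{(n)}$ is the restriction of $x_1^n\partial_{x_1}^{(n)}$. The only cosmetic difference is that the paper works with the eigenspace decomposition $\overline{E}_{|D\cap U}=\bigoplus_\alpha V_\alpha$ and its base change $\bigoplus_\alpha (V_\alpha)_s$, whereas you work with the generating sections $e_i$ and their relations — these are equivalent formulations of the same argument.
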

\begin{proof}
Let $\xi$ be the generic point of $D'_s$, then as $\overline{X}_s$ intersects the smooth locus of $D'$ there exists  an open neighbourhood $U\subset \bar{X}$ of $\xi$ such that $U=\Spec A$ and there are local \'etale coordinates $x_1,\dotsc x_n$ with $D\cap U=\{x_1=0\}$ (and hence $D'_s\cap U=\{\bar{x}_1=0\}$, where $\bar{x}_1$ is the restriction of $x_1$ to $\overline{X}_s$). Let $\overline{E}$ a torsion free logarithmic extension of $E$, then by the theory of exponents $\overline{E}_{|D\cap U}$ decomposes uniquely as a direct sum $\oplus_{\alpha\in\mathbb{Z}_p}V_\alpha$, but then 
\[\overline{E}_{|(D\cap U)_s}=(\overline{E}_{|D\cap U})_s=\bigoplus_{\alpha\in\mathbb{Z}_p}(V_\alpha)_s\]
which gives a decomposition for $\bar{E}_s$ along $D_s$ around $\xi$, hence the exponents have to coincide, because the action of $\bar{x}_1^n\partial_{\bar{x}_1}^{(n)}$ is determined by the one of $x_1^n\partial_{x_1}^{(n)}$.
\end{proof}

\begin{remark} We would like to point out the following subtlety: in \cite{Kindler/FiniteBundles} exponents are defined for locally free logarithmic extension (as these always exist, this is enough for the purposes of the paper). We, on the other hand, need the most generic definition of \cite{Gieseker/FlatBundles} (also detailed in \cite{Kindler/thesis}) because we will deal with torsion free rather than with locally free logarithmic extensions.
\end{remark}
We are now ready to prove the main result of this section.

\begin{proposition}\label{prop:max} Keeping the same notations and hypothesis of Lemma~\ref{lem:exp}, assume furthermore that $\overline{X}_s$ meets the smooth locus of every irreducible component of $D$. Then the natural inclusion $(E_m)_s\subset (E_s)_m$ is an isomorphism.
\end{proposition}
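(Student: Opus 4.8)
The plan is to reduce everything to the local affine situation described before Lemma~\ref{lem:exp}, and to exhibit a concrete $\tau$-extension whose behaviour under base change to the fiber over $s$ we fully understand. By Lemma~\ref{lem:max} and Proposition~\ref{prop:prop}~$(iii)$ the question is local on $\overline{X}$ near the generic points of the components of $D$, so we may assume $\overline{X}=\Spec A$, $X=\Spec A[x_1^{-1}]$, $D=\{x_1=0\}$, with $x_1,\dots,x_n$ a system of \'etale coordinates, and that $D_s$ is a smooth divisor of $\overline{X}_s$. Fix a section $\tau:\Z_p/\Z\to\Z_p$ of the quotient map and let $\bar E_\tau$ be the associated locally free $\tau$-extension of $E$ to $\overline{X}$; by Lemma~\ref{lem:exp} its restriction $(\bar E_\tau)_s$ is a $\tau$-extension of $E_s$ to $\overline{X}_s$, since taking the fiber at $s$ preserves the exponents and the property of being locally free.

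The key algebraic point is then to describe $\bar E_m$ inside $\bar E_\tau$ purely in terms of the exponents, in a way that manifestly commutes with $-\otimes_{\Oh_S}\kappa(s)$. Concretely, I expect that $\bar E_\tau$ decomposes (at least after pulling back along iterated relative Frobenii, which is how the $E^{(i)}$ are built) into generalized eigen-pieces for the operators $x_1^n\partial_{x_1}^{(n)}$ indexed by the exponents $\alpha\in\tau(\Z_p/\Z)$, and that the maximal sub-$\D_{\overline{X}/S}$-module $\bar E_m$ is exactly the direct summand corresponding to the integral exponent $\alpha=0$ (equivalently: the sub-bundle that extends logarithmically with trivial, i.e.\ integral, exponents is the one that extends as an honest stratified bundle, because an integral exponent can be cleared by twisting by a power of $x_1$ and hence the $\D(\log D)$-action prolongs to a $\D$-action). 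Running the definition
\[
\bar E_m^{(i)}=\bigcap_{k\in\mathbb{N}}F_{X/S,i+k}^{k,*}(\bar E^{(i+k)})\subset \bar E^{(i)}
\]
on each eigen-piece and using that the relative Frobenius pullbacks act on the exponent $\alpha$ by multiplication by $p$, one sees that the intersection stabilizes on precisely the $\alpha\in\Z$ component. All of these constructions — the eigenspace decomposition, the Frobenius pullbacks, the intersection — are compatible with flat base change along $S\to S$, and in particular with restriction to the fiber $\overline{X}_s$, because $(\bar E_\tau)_s$ carries the same exponents by Lemma~\ref{lem:exp}. Hence $(\bar E_m)_s$ and $(\bar E_s)_m$ are both identified with the $\alpha=0$ summand of $(\bar E_\tau)_s$, and the natural inclusion $(E_m)_s\subset(E_s)_m$, obtained by restricting $(\bar E_m)_s\subset(\bar E_s)_m$ to $X_s$, is an isomorphism. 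Carrying this out for each irreducible component of $D$ separately — which the hypothesis that $\overline{X}_s$ meets the smooth locus of every component permits — and patching gives the global statement.

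The main obstacle I anticipate is making the "eigen-piece" description of $\bar E_m$ rigorous in the relative, positive-characteristic setting: the decomposition along $D$ from the theory of exponents only uses the operators in the kernel of $i^*\D_{\overline{X}/S}(\log D)\to i^*\D_{\overline{X}/S}$, so a priori it is a decomposition of $\bar E_{|D}$, not of $\bar E$ itself, and one must check that it lifts compatibly through all the Frobenius descent data $\{E^{(i)},\sigma_i\}$ so that the intersection defining $\bar E_m^{(i)}$ can be computed componentwise. Equivalently, one needs to know that the formation of $\bar E_m$ commutes with the decomposition and that, on the non-integral components, the iterated Frobenius pullbacks $F_{X/S,i+k}^{k,*}(\bar E^{(i+k)})$ fail to fill up $\bar E^{(i)}$ in the limit, which is exactly the obstruction coming from the exponent being a non-zero element of $\Z_p/\Z$. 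A clean way to handle this without the eigenspace bookkeeping is to argue by semicontinuity on the ranks: $(E_m)_s\subset(E_s)_m$ is an inclusion of stratified bundles on $X_s$, hence it is an isomorphism iff the ranks agree; one then computes $\rk(E_m)$ and $\rk((E_s)_m)$ as the multiplicity of the integral exponent — for $E$ over $X/S$ via $\bar E_\tau$, and for $E_s$ over $X_s$ via $(\bar E_\tau)_s$ — and concludes equality directly from Lemma~\ref{lem:exp}. I would present the proof in this second form if the first turns out to require too much care with the descent data.
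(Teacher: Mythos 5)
Your starting point (the $\tau$-extension $\bar E_\tau$, Lemma~\ref{lem:exp}, and the effect of Frobenius on exponents) matches the paper's, but the route you take from there has a genuine gap, and your fallback does not repair it. The decomposition $\bigoplus_{\alpha}V_\alpha$ furnished by the theory of exponents is a decomposition of $\bar E_\tau|_D$ only; it does not lift to a direct sum decomposition of $\bar E_\tau$ as a $\D_{\overline{X}/S}(\log D)$-module (there can be non-split extensions between pieces with different exponents), so ``$\bar E_m$ is the $\alpha=0$ summand'' is not meaningful globally. You flag this yourself, but the fallback via ranks quietly assumes the same false statement: it is true that $\rk(E_m)\le m_0(E)$, the multiplicity of the exponent $0\in\mathbb{Z}_p/\mathbb{Z}$ (since $E_m$ extends to a $\D_{\overline{X}/S}$-module, it contributes only the exponent $0$), but equality fails in general --- an irreducible regular singular $E$ of rank $\ge 2$ with exponents $\{0,\alpha\}$, $\alpha\notin\mathbb{Z}$, has $E_m=0$ while $m_0(E)=1$. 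So neither $\rk(E_m)$ nor $\rk((E_s)_m)$ is computed by Lemma~\ref{lem:exp}, and semicontinuity gives you nothing. A further small issue: restriction to the fiber over $s$ is not flat base change over $S$, so the commutation of the infinite intersection defining $\bar E_m^{(i)}$ with $(-)_s$ is itself something to argue, not a formality.

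The paper's proof uses the exponents quite differently: it shows that $\bar E^{(i)}$ is again a torsion-free logarithmic extension whose exponents lie in the image of $\sigma_i\circ\tau$, so that, on a suitable open $V_i\subset\overline{X}_s$ containing $X_s$ and the generic points of $D_s$, both $(\bar E^{(i)})_s$ and $(\bar E_s)^{(i)}$ are $\sigma_i\circ\tau$-extensions of $E_s^{(i)}$. The \emph{uniqueness} of $\tau$-extensions (up to an isomorphism restricting to the identity on $E_s^{(i)}$) then forces the natural inclusion $(\bar E_s)^{(i)}\subset(\bar E^{(i)})_s$ to be an equality over $V_i$. Finally, if $(E_m)_s\subsetneq(E_s)_m$ were strict, the quotient would be a nonzero stratified bundle, hence locally free, so the strictness would persist on every dense open --- contradicting the equality of the two defining intersections over the $V_i$. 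If you want to salvage your write-up, replace the eigen-piece/rank argument with this uniqueness argument; the rest of your setup can stay.
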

\begin{proof}
Note that the existence of the natural inclusion $(E_m)_s\subset (E_s)_m$ is due to the fact that $\overline{X}_s$ meets the smooth locus of all irreducible components of $D$.

Let's fix $\tau:\mathbb{Z}_p/\mathbb{Z}\to\mathbb{Z}_p$ a section of the projection and let  $\bar{E}$ be a $\tau$-extension of $E$. Then $\bar{E}^{(i)}$ has a natural structure of $\D_{X^{(i)}/k}(\log D^{(i)})$-module, which we can locally explicitly write  as follows: let $\mathsf{D}\in\D_{\overline{X}^{(i)}/S}(\log D^{(i)})$ and $e\in \bar{}^{(i)}$, then we can locally write $\mathsf{D}=\sum_{\underline{n}}a_{\underline{n}}\mathsf{D}_{\underline{n}}$, and we define
\[\mathsf{D}(e)\doteq  \sum_{\underline{n}}(a_{\underline{n}})^{p^i}\mathsf{D}_{p^i\cdot\underline{n}}(e),\]
where if $\underline{n}=(n_1,\dotsc, n_d)$, then $p^i\cdot\underline{n}=(p^in_1,\dotsc,p^in_d)$. It is clear that this action extends the $\Oh_{X^(i)}$-action, moreover if $\alpha_j$ are the exponents of $\bar{E}$ along $D$, then the exponents of $\bar{E}^{(i)}$ along $D^{(i)}$ are $\sigma_i(\alpha_j)$, where $\sigma_i:\mathbb{Z}_p\to\mathbb{Z}_p$ is defined as
\[\sigma_i(\sum a_lp^l)=\sum a_{l+i}p^l.\]
Summarizing, $\bar{E}^{(i)}$ is a torsion free coherent module whose exponents lie in the image of $\sigma_i\circ \tau$. In particular,  there is an open of $\overline{X}^{(i)}_s$ containing $X_s$ and all codimension $1$ points on which $(\bar{E}^{(i)})_s$ it is free and this, together with Lemma~\ref{lem:exp}, implies $(\bar{E}^{(i)})_s$ restricted to that open is a $\sigma_i\circ \tau$-extension of $E^{(i)}_s$. Similarly, as $\bar{E}_s$ is a $\tau$-extension of $E_s$, we have that $(\bar{E}_s)^{(i)}$ is a $\sigma_i\circ \tau$-extension of $\bar{E}^{(i)}$, over some open of $\overline{X}^{(i)}$ containing $X_s$ and all codimension $1$ points. Let $V_i$ be the intersection of the locally freeness locus of $(\bar{E}^{(i)})_s$ and $(\bar{E}_s)^{(i)}$, then restricted to $V_i$ both $(\bar{E}^{(i)})_s$ and $(\bar{E}_s)^{(i)}$ are $\sigma_i\circ \tau$ extensions and hence over $V_i$ there exists an isomorphism $\phi_i:(\bar{E}_s)^{(i)}\to (\bar{E}^{(i)})_s$ whose restriction to $E_s$ is the identity. 
Composing $\phi_i$ with the inclusion $(\bar{E}^{(i)})_s\subset(\bar{E}_s)^{(i)}$ we get a endmorphism $\rho_i:\bar{E}^{(i)}_s\to\bar{E}^{(i)}_s$ which, restricted to $E^{(i)}_s$ is the identity. In particular $\rho_i$ must be the identity as well and $\phi_i$ must be an isomorphism.
It follows that for every $i$ there is an open $V_i$ such that the inclusion $(\bar{E}_s)^{(i)}\subset(\bar{E}^{(i)})_s$ is an isomorphism. Without loss of generality, we can also assume that $F_{\overline{X}_s/k(s)}(V_i)\subset V_{i-1}$.
 
 Note that 
\[(\bar{E}_s)_m=\bigcap_{i>0} F_{\overline{X}_s/k(s),i}^{i,*}(\bar{E}_s)^{(i)},\]
while
\[(\bar{E}_m)_s=\big(\bigcap_{i>0}F_{\overline{X}/S,i}^{i,*}(\bar{E}^{(i)})\big)_s=\bigcap_{i>0}\big(F_{\overline{X}/S,i}^{i,*}(\bar{E}^{(i)})\big)_s=\bigcap_{i>0}F_{\overline{X}_s/k(s),i}^{i,*}(\bar{E}^{(i)})_s.\]

Now by way of contradiction assume that the inclusion $(\bar{E}_m)_s\subset (\bar{E}_s)_m$ is strict. Note that as the quotient $(\bar{E}_m)_s/(\bar{E}_s)_m$ is again a $\D_{\bar{X}_s/k(s)}$-module, it is locally free. Hence, if $e\in(\bar{E}_m)_s - (\bar{E}_s)_m$ then for every open $U\subset\bar{X}_s$ we have that (the localization of) $e$ must be in $((\bar{E}_m)_s)_{|U}- ((\bar{E}_s)_m)_{|U}$.
In particular we have that if $(\bar{E}_m)_s\subset (\bar{E}_s)_m$ is strict, then so is 
 $\lim_i((\bar{E}_m)_s)_{|V'_i}\subset\lim_i((\bar{E}_s)_m)_{|V'_i}$, where $V'_i\subset \overline{X}_s$ is the preimage of $V_i$ under the relative Frobenii. 
 
Let $W_i\subset\overline{X}$ such that $V_i=W_i\cap\overline{X}_s$ (in particular for every $i$, $W_i$ contains the generic points of all components of $D$), and let us chose them so that  $F^{-1}_{\overline{X}^{(i-1)}/S}(W_i)\subset W_{i-1}$, then by Lemma~\ref{lem:max} we have that 
\[((\bar{E}_m)_s)_{|V_i}=(\bar{E}_m)_{|V_i}=((\bar{E}_{|W_i})_m)_s=\bigcap_{i>0}F_{V_i/k(s),i}^{i,*}(\bar{E}^{(i)}_{|W_i})_s=\bigcap_{i>0}F_{V_i/k(s),i}^{i,*}(\bar{E}^{(i)})_{|V_i}\]
and
\[((\bar{E}_s)_m)_{|V_i}=((\bar{E}_s)_{|V_i})_m=\bigcap_{i>0} F_{V_i/k(s),i}^{i,*}(\bar{E}_s)_{|V_i}^{(i)},\]
and as $(\bar{E}_s)^{(i)}_{|V_i}=((\bar{E}^{(i)})_s)_{|V_i}=(\bar{E}^{(i)})_{|V_i}$, we get that 
\[\lim_i((\bar{E}_m)_s)_{|V_i}=\lim_i((\bar{E}_s)_m)_{|V_i},\]
and hence a contradiction.
\end{proof}

\begin{remark} It comes with no surprise that the previous proposition totally fails if $E$ is not regular singular. It is actually possible to construct an example of a $\D(\mathbb{A}^2/\mathbb{A}^1)$-module whose restriction to every fiber is trivial but which is not regular singular (see \cite{Battiston/pcur}).
\end{remark}

\section{The Gauß-Manin stratification for regular singular stratified bundles}\label{sec:GM}
In order to overcome the problem that Gauß-Manin connections do not in general extend to a stratification in positive characteristic, in \cite{Phung} Ph\`ung defined the $0$-th Gauß-Manin stratification relative to a smooth morphism $f:X\to S$ of smooth varieties. It is a functor $H^0_{\str}:\Strat(X)\to \Strat(S)$, defined in the following way: if $E\in\Strat(X)$, then 
\[H^0_{\str}(X/S,E)=\bigcap f_*E^{(i)}=f_*E^{\nabla_{X/S}},\]
where  $E^{\nabla_{X/S}}$ the sections of $E$ that are annihilated by all $\mathsf{D}\in\D_{X/S}$ with $\mathsf{D}(1)=0$.
One can prove (see \cite[Thm~2.2]{Phung}) that $f_*E^{\nabla_{X/S}}$ is $\Oh_S$-coherent and comes with a natural stratification. Moreover (see the proof of \cite[Lemma~1.5]{Phung}) the natural morphism $f^*f_*E^{\nabla_{X/S}}\to E$ is injective and horizontal. Finally (see \cite[Lemma~3.3]{Phung}) $f^*f_* E^{\nabla_{X/S}}$ is the maximal subobject of $E$ that is in the essential image of the pullback map $f^*:\Strat(S)\to\Strat(X)$ (in particular, if $S=\Spec k$, it is the maximal trivial subobject) and if $f$ is proper then for every $s\in S(k)$ we have that the natural inclusion
\[(f^*f_*E^{\nabla_{X/S}})_s\to f_s^*f_{s,*}^{}	 E_s^{\nabla_{X_s/k(s)}}\]
is an isomorphism.
\begin{remark} Note that even though in \cite[Lemma~3.3]{Phung} the map $f$ is assumed to be proper, it is immediate from the proof that the characterization of $f^*f_* E^{\nabla_{X/S}}$ as the maximal subobject of $E$ in the essential image of $f^*$ does not make use of the properness of $f$ and is true for any smooth map.
\end{remark}

\begin{proposition}\label{prop:bc} Let $X\to S$ be a smooth morphism of smooth varieties, and let $X\subset \overline{X}$ be a good compactification relative to $S$. Let $s\in S(k)$ 
such that $\overline{X}_s$ intersects the smooth locus of all components of $\overline{X}\backslash X$ non-trivially and such that $X_s\subset\overline{X}_s$ is a partial good compactification, then for every $E\in\Strat(X,\overline{X})^{\rs}$, we have that the natural inclusion
\[(f^*f_*E^{\nabla_{X/S}})_s\to f_s^*f_{s,*} E_s^{\nabla_{X_s/k(s)}}\]
is an isomorphism.
\end{proposition}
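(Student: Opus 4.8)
The strategy is to reduce the non-proper statement to the proper case of \cite[Lemma~3.3]{Phung} applied to the compactification $\overline{X}\to S$, by controlling the discrepancy between $E$ and its maximal sub-stratified bundle $E_m$ on $\overline{X}$. The key observation is that $f^*f_*E^{\nabla_{X/S}}$, being the maximal subobject of $E$ in the essential image of $f^*:\Strat(S)\to\Strat(X)$, factors through $E_m$: indeed any stratified bundle on $X$ pulled back from $S$ is in particular a $\D_{X/S}$-module, and I claim it extends to a $\D_{\overline{X}/S}$-module on $\overline{X}$ — namely the pullback of the corresponding stratified bundle on $S$ along $\overline{X}\to S$ (which makes sense since $\overline{X}\to S$ is smooth). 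Hence $f^*f_*E^{\nabla_{X/S}}\subset E_m\subset E$, and moreover $f^*f_*E^{\nabla_{X/S}}$ is the maximal pulled-back subobject already \emph{inside} $E_m$. So the first step is to replace $E$ by $E_m$ — more precisely, to show that $f_*E^{\nabla_{X/S}} = f_*(E_m)^{\nabla_{X/S}}$ and likewise fiberwise, so that the statement for $E$ is equivalent to the statement for the genuine stratified bundle $E_m$ (or rather its canonical extension $\overline{E}_m$ to $\overline{X}$).

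Next, I would apply Ph\`ung's proper base change \cite[Lemma~3.3]{Phung} to the \emph{proper} morphism $\bar f:\overline{X}\to S$ and the stratified bundle $\overline{E}_m\in\Strat(\overline{X})$: for every $s\in S(k)$,
\[
(\bar f^*\bar f_*\overline{E}_m^{\,\nabla_{\overline{X}/S}})_s \xrightarrow{\ \sim\ } \bar f_s^*\bar f_{s,*}(\overline{E}_m)_s^{\,\nabla_{\overline{X}_s/k(s)}}.
\]
Restricting this isomorphism to the open $X_s\subset\overline{X}_s$ and using that restriction to a dense open induces an equivalence on the relevant subcategories (Proposition~\ref{prop:prop}, together with the fact that $\bar f_*\overline{E}_m^{\,\nabla}$ lives on $S$ and is unchanged), I get the desired isomorphism but with $E$ replaced by $E_m$ and $\overline{X}_s$-data replaced by $\overline{X}$-data. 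The point is that $\bar f_*\overline{E}_m^{\,\nabla_{\overline{X}/S}} = f_*(E_m)^{\nabla_{X/S}} = f_*E^{\nabla_{X/S}}$, the first equality because sections killed by $\D_{\overline{X}/S}$ with trivial symbol are the same whether computed on $\overline{X}$ or, after restriction, on $X$ (a section of a stratified bundle on the proper $\overline{X}$ is determined by its restriction to the dense open $X$, and horizontality is checked generically), and the second by the reduction of the previous paragraph.

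The main obstacle — and the step that genuinely uses the hypotheses on $s$ and the regular-singularity of $E$ — is precisely reconciling the fiber $(\overline{E}_m)_s$ appearing in Ph\`ung's formula with the object $(E_s)_m$, i.e. the maximal sub-stratified bundle computed \emph{after} passing to the fiber. This is exactly the content of Proposition~\ref{prop:max}: under the assumption that $\overline{X}_s$ meets the smooth locus of every component of $D=(\overline{X}\setminus X)_{\red}$ and that $X_s\subset\overline{X}_s$ is again a partial good compactification, the natural inclusion $(E_m)_s\subset(E_s)_m$ is an isomorphism. Without regular-singularity this fails (cf.\ the remark after Proposition~\ref{prop:max}), which is why the hypothesis $E\in\Strat(X,\overline{X})^{\rs}$ is essential. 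So the final assembly is: starting from $f_s^*f_{s,*}E_s^{\nabla_{X_s/k(s)}}$, which equals $\bar f_s^*\bar f_{s,*}((E_s)_m)^{\nabla}$ restricted to $X_s$ since the maximal pulled-back subobject of $E_s$ sits inside $(E_s)_m$, I invoke Proposition~\ref{prop:max} to rewrite $(E_s)_m = (E_m)_s = (\overline{E}_m)_s|_{X_s}$, then apply Ph\`ung's proper base change to $\overline{E}_m$ over $\bar f$, and finally use the reduction $f_*E^{\nabla_{X/S}}=\bar f_*\overline{E}_m^{\,\nabla}$ together with compatibility of pullback with restriction to the fiber to identify the result with $(f^*f_*E^{\nabla_{X/S}})_s$. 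The naturality of all these identifications with the canonical comparison map has to be checked, but each piece is functorial, so this is routine.
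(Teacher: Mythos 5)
Your proposal follows essentially the same route as the paper's proof: reduce to the maximal sub-stratified bundle $E_m$ via the observation that $f^*f_*E^{\nabla_{X/S}}$ extends to a $\D_{\overline{X}/S}$-module and hence lands in $E_m$ (and likewise on the fiber), invoke Proposition~\ref{prop:max} to identify $(E_m)_s$ with $(E_s)_m$, and then apply Ph\`ung's proper base change to $\bar f:\overline{X}\to S$ together with the restriction equivalence of Proposition~\ref{prop:prop}. The argument is correct and matches the paper's in all essential steps.
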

\begin{proof}
First let us prove that there is such a natural inclusion: on the left hand side we have the restriction to $X_s$ of a stratified bundle coming from $S$. In particular, it is certainly trivial. On the right hand side, by \cite[Lemma~3.3]{Phung}, we have the maximal trivial sub-stratified bundle of $E_s$, hence the inclusion.

Let $\bar{f}:\overline{X}\to S$ denote the compactification of $f$ and let $E\in\Strat^{\rs}(X,\overline{X})$. As $f_*E^{\nabla_{X/S}}\in\Strat(X)$ it follows that $\bar{f}^*f_*E^{\nabla_{X/S}}\in\Strat(\overline{X})$, in particular $f^*f_*E^{\nabla_{X/S}}$ extends to a $\D_{\overline{X}/S}$-module. By Lemma~\ref{lem:max}, this means that
\[f^*f_*E^{\nabla_{X/S}}\subset E_m,\]
where $E_m$ is defined in the previous section as the maximal subobject of $E$ extending to a $\D_{\overline{X}/S}$-module and clearly $f^*f_*E^{\nabla_{X/S}}=f^*f_*E_m^{\nabla_{X/S}}$.
Similarly, 
\[f_s^*f_{s,*}^{} E_s^{\nabla_{X_s/k(s)}}=f_s^*f_{s,*}^{} ((E_s)_m)^{\nabla_{X_s/k(s)}}\subset (E_s)_m,\] and as by Proposition~\ref{prop:max}, we have that $(E_m)_s=(E_s)_m$, we can without loss of generality assume that $E=E_m$ and hence that $E$ extends to a $\D_{\overline{X}/S}$-module. By Proposition~\ref{prop:prop} $(ii)$ if $\bar{E}$ is the extension of $E$ to $\overline{X}$ then $f^*f_*E^{\nabla_{X/S}}$ is the restriction to $X$ of $\bar{f}^*\bar{f}_*\bar{E}^{\nabla_{\overline{X}/S}}$, and as $\bar{f}$ is proper
\[(\bar{f}^*\bar{f}_*\bar{E}^{\nabla_{\overline{X}/S}})_s=\bar{f}_s^*\bar{f}_{s,*}\bar{E}_s^{\nabla_{\overline{X}_s/k(s)}}\]
by \cite[Lemma~3.3]{Phung} and this concludes the proof.
\end{proof}
\section{Behaviour of the stratified fundamental group under \'etale covers} \label{sec:galois}
In this section we will study the behaviour of the  stratified fundamental group under (generic) Galois covers, whose analogue in characteristic zero is due to Katz (\cite[§1.4]{Katz/Calculation}). 
\begin{lemma}\label{lem:galois}
Let $f:X\to Y$ be a Galois \'etale cover and $E\in\Strat(X)$, let $x\in X(k)$ and $y=f(x)$. Then the monodromy group of $E$ sits in an exact sequence
\[1\to\pi(f^*E,x)\to\pi(E,y)\to \pi(\mathcal{E}_f,y)\to 1\]
where $\mathcal{E}_f$ is the full subcategory of $\langle E\rangle_\otimes$ consisting of all objects that are trivialized by $f$. 
In particular, if $k$ is algebraically closed and $f$ is \'etale with Galois group $G$ we have exact sequences
\[1\to\pi^{\Strat}(Y,y)\to\pi^{\Strat}(X,x)\to G\to 1\]
and 
\[(1\to)\pi^{\rs}(Y,y)\to\pi^{\rs}(X,x)\to G^{t}\to 1\]
where $G^{t}$ is the maximal quotient of $G$ corresponding to an tame cover.
\end{lemma}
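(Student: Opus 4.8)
The strategy is to reduce everything to the representation-theoretic criteria of Proposition~\ref{prop:crit}, applied to the sequence of categories $\Strat(X/k)\to\Strat(Y/k)$ and its regular singular counterpart, and to identify the relevant ``quotient'' category with finite/tame covers of $Y$. I would begin with the general monodromy statement for a fixed $E\in\Strat(X)$. Set $L=\pi(f^*E,x)$, $G=\pi(E,y)$ and $H=\pi(\mathcal{E}_f,y)$; the functors $B:\langle\mathcal{E}_f\rangle_\otimes\to\langle E\rangle_\otimes$ (inclusion) and $A:\langle E\rangle_\otimes\to\langle f^*E\rangle_\otimes$ (pullback by $f$) induce the maps $H\to G\to L$. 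The pullback functor $f^*$ is exact, tensor, and compatible with duals, and since $f$ is \emph{finite} étale it admits a right adjoint $f_*$ (pushforward of $\D$-modules along a finite étale map) with $f_*f^*M\cong M\otimes f_*\Oh_X$; this adjunction is the engine behind both the full faithfulness needed for the first map and the surjectivity needed for the third. Concretely: faithful flatness of $G\to H$ is Proposition~\ref{prop:crit}(i) — $B$ is fully faithful by construction and its essential image is closed under subobjects because a subobject in $\langle E\rangle_\otimes$ of an object trivialized by $f$ is again trivialized by $f$ ($f^*$ being exact). That $L\hookrightarrow G$ is a closed immersion is Proposition~\ref{prop:crit}(ii): every object of $\langle f^*E\rangle_\otimes$ is by definition tensor-generated by $f^*E=A(E)$. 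Exactness then follows from Proposition~\ref{prop:crit}(iii): condition (a) says $f^*V$ is trivial iff $V\in\mathcal{E}_f$, which is the \emph{definition} of $\mathcal{E}_f$; condition (b) is the statement that the maximal trivial subobject of $f^*V$ descends, which holds because $(f^*V)^0 = f^*(f_*((f^*V))^{\nabla})$-type arguments — more cleanly, the maximal subobject of $V$ in the essential image of $f^*$ exists (by the remark after Prop.~\ref{prop:prop}, or directly via $f_*$) and its pullback is exactly $(f^*V)^0$; condition (c) follows since for $W\in\langle f^*E\rangle_\otimes$ one has $W\subset f^*f_*W = A(f_*W\cap\langle E\rangle_\otimes$-piece$)$, using that $f_*$ of a stratified bundle is a stratified bundle and that $f_*W$ lies in $\langle E\rangle_\otimes$ up to the tensor-closure (one may need to enlarge $W$ to a sum of objects of the form $f^*V$, which is harmless for condition (c)).

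Next I would globalize: taking the limit over all $E\in\Strat(X)$ — equivalently, letting $E$ range over a generating family — the category $\langle E\rangle_\otimes$ exhausts $\Strat(X/k)$, the category $\langle f^*E\rangle_\otimes$ exhausts $\Strat(Y/k)$ via $f^*$ (here I use that $f^*:\Strat(Y)\to\Strat(X)$ is essentially surjective onto those objects trivialized after composing with a further cover — no, more simply: $f$ being finite étale, $f^*$ identifies $\Strat(X/k)$ with a full subcategory and every object of $\Strat(X/k)$ arises as a subquotient of $f^*(\text{something})$ since $M\hookrightarrow f^*f_*M$ is no — careful: one wants the other direction). The cleanest route is: apply the exact sequence for each $E$ and pass to the inverse limit over the cofiltered system of such $E$; the inverse limit of exact sequences $1\to\pi(f^*E)\to\pi(E)\to\pi(\mathcal{E}_f,y)\to 1$ of affine group schemes is again exact on the level of group schemes because these are all affine (pro-algebraic) and the transition maps are faithfully flat, so $\varprojlim$ is exact. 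One then identifies $\varprojlim_E\pi(f^*E,x)=\pi^{\Strat}(X,x)$, $\varprojlim_E\pi(E,y)=\pi^{\Strat}(Y,y)$, and $\varprojlim_E\pi(\mathcal{E}_f,y)$ with the Tannaka dual of the category of all stratified bundles on $Y$ trivialized by $f$; since $f$ is Galois étale of group $G$, a stratified bundle trivialized by $f$ is the same as a representation of the constant group $G$ (a $G$-equivariant trivial bundle on $X$ descends to a local system on $Y$ with monodromy in $G$, equivalently a $k[G]$-module), so this Tannaka dual is precisely the constant group scheme $G$. This gives the exact sequence $1\to\pi^{\Strat}(X,x)\to\pi^{\Strat}(Y,y)\to G\to 1$ — note I must be careful about the order: $f^*$ goes $\Strat(Y)\to\Strat(X)$, so the inclusion of fundamental groups goes $\pi^{\Strat}(X)\hookrightarrow\pi^{\Strat}(Y)$, matching the statement.

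For the regular singular version I would rerun the same machine inside $\Strat^{\rs}$, using that $f^*$ and $f_*$ preserve regular singularity: for $f^*$ this is because pulling back a logarithmic extension along a finite étale map (extended over compactifications, which can be arranged since $f$ is étale hence tamely ramified at infinity trivially) gives a logarithmic extension; for $f_*$ along a finite étale cover the pushforward of a torsion-free $\D_{\overline X}(\log D)$-module is again one, at least after shrinking to an open of $\overline Y$ of complementary codimension $\ge2$ using Proposition~\ref{prop:prop}(iii). Thus Proposition~\ref{prop:crit}(i),(ii) give that $\pi^{\rs}(X,x)\to\pi^{\rs}(Y,y)$ is a closed immersion and $\pi^{\rs}(Y,y)\to\pi^{\rs}(\mathcal{E}_f^{\rs},y)$ is faithfully flat, where $\mathcal{E}_f^{\rs}$ is the category of regular singular stratified bundles on $Y$ trivialized by $f$. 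The key identification is that this last category has Tannaka dual $G^t$: a regular singular stratified bundle on $Y$ trivialized by the étale cover $f$ corresponds to a representation of $G$ that factors through $G^t$, because by \cite[Cor.~5.6]{Kindler/FiniteBundles} / \cite{Kerz/tameness} a finite stratified bundle on $Y$ is regular singular iff the corresponding finite étale cover is tame, so the sub-Tannaka-category generated by regular singular-and-$f$-trivialized objects sees exactly the maximal tame quotient $G^t$ of $G$. This proves the right-exactness $\pi^{\rs}(Y,y)\to\pi^{\rs}(X,x)$ — wait, again order: we get $\pi^{\rs}(X,x)\hookrightarrow\pi^{\rs}(Y,y)\twoheadrightarrow G^t\to 1$. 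Finally, when $G=G^t$ (i.e.\ $f$ itself is tame, automatic here since étale), \emph{all} objects trivialized by $f$ are already regular singular, so condition (a),(b),(c) of Proposition~\ref{prop:crit}(iii) go through verbatim as in the stratified case — the only place regularity could obstruct exactness on the left is condition (c), where one needs $f_*W$ of a regular singular $W$ to be regular singular, which holds precisely when the ramification of $f$ is tame, i.e.\ $G=G^t$. This yields exactness on the left in that case.

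\textbf{Main obstacle.} The delicate point is condition (c) of Proposition~\ref{prop:crit}(iii) in the regular singular setting — controlling $f_*$ of a regular singular stratified bundle. Pushforward along a finite morphism generically preserves all the $\D$-module structure, but one must check the behaviour of logarithmic extensions and exponents at the boundary: this is exactly where the tameness hypothesis $G=G^t$ enters (tame covers do not worsen the exponents beyond a $\Z$-shift), and it is why left-exactness fails in general and only survives after replacing $G$ by $G^t$. I expect the bulk of the technical work to be a careful local computation at the generic points of the boundary divisor $D$ on $Y$, comparing exponents of $E$ and of $f_*E$, analogous to the exponent bookkeeping in Lemma~\ref{lem:exp} and Proposition~\ref{prop:max}.
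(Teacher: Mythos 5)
Your overall architecture is the same as the paper's: run the sequence through Proposition~\ref{prop:crit}, get (a) from the definition of $\mathcal{E}_f$, get (c) from the fact that any $F$ on the cover is a subobject (indeed a direct summand) of $f^*f_*F$, identify the quotient category with $\Repf_k(G)$ (resp.\ with the representations factoring through $G^t$ in the regular singular case, where \cite[Cor.~5.6]{Kindler/FiniteBundles} both supplies the identification and explains why left-exactness requires $G=G^t$), and finish with a limit argument. The genuine gap is condition (b), which is precisely exactness in the middle. You must produce, for each $V\in\langle E\rangle_\otimes$, a subobject $V'\subset V$ with $f^*V'\simeq (f^*V)^0$, and neither of your proposed mechanisms delivers it: the Gau\ss--Manin characterization of $f^*f_*E^{\nabla}$ as the maximal subobject in the essential image of $f^*$ (Section~\ref{sec:GM}, \cite[Lemma~3.3]{Phung}) concerns horizontal sections of a smooth family and says nothing useful for a finite \'etale $f$, where $\D_{X/Y}=\Oh_X$ and $f^*f_*W\cong\bigoplus_{\sigma\in G}\sigma^*W$ is \emph{larger} than $W$ rather than a subobject of it; and ``$(f^*V)^0=f^*(f_*((f^*V))^{\nabla})$-type arguments'' merely restates the conclusion. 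The paper's actual argument is Galois descent: $G$ acts semilinearly on $f^*V=V\otimes_{\Oh_Y}\Oh_X$ compatibly with the $\D_{X/k}$-structure, so each $\sigma\in G$ sends the maximal trivial subobject to a trivial subobject; by maximality $(f^*V)^0$ is $G$-stable, hence descends along the torsor $X\to Y$ to a coherent subsheaf $V'\subset V$, which is checked to be horizontal after pullback. Without the $G$-stability observation, (b) is unproven.

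Two smaller points. Your parenthetical that tameness of $f$ is ``automatic here since \'etale'' is wrong and contradicts the rest of your own paragraph: tameness in the sense of \cite{Kerz/tameness} is a condition at the boundary of a compactification of $Y$, and an \'etale cover of a non-proper variety can be wild at infinity (Artin--Schreier covers of $\A^1_k$); this is exactly why $G^t$ can be a proper quotient of $G$ and why the regular singular sequence is only conditionally left-exact, so you should not also claim the closed immersion $\pi^{\rs}(X)\to\pi^{\rs}(Y)$ unconditionally earlier in the same paragraph. On the other hand, your observation that $f_*W$ need not literally lie in $\langle E\rangle_\otimes$ and that one must enlarge the generating object is a legitimate subtlety that the paper itself glosses over, so no complaint there.
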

\begin{proof}
The second part of the lemma follows from a limit argument on the first exact sequence, together with \cite[Prop.~2.8]{Kindler/FiniteBundles} and the fact if $f$ is \'etale and $F\in\Strat(Y)$, we have that $f_*F\in\Strat(X)$, in particular $F$ is a quotient of $f^*f_*F$ and by Proposition~\ref{prop:crit} $(ii)$ we have the injectivity of the first map. The same holds for $\pi^{\rs}$ under the condition that $G=G^{t}$ by \cite[Cor.~5.6]{Kindler/FiniteBundles}.
So we are left to prove the exactness of the first sequence, where the injectivity of the first map and the surjectivity of the last follows by Proposition~\ref{prop:crit} $(i)$ and $(ii)$. In order to prove this lemma we will make use of the exactness criterion in Proposition~\ref{prop:crit} $(iii)$: point $(a)$ is satisfied by the very definition of $\mathcal{E}_f$ and point $(c)$ by the fact that $F$ is a quotient of $f^*f_*F$ (actually even a subobject). For point $(b)$, let $E\in\Strat(X)$ and let $(f^*E)^0\subset f^*E$ be the maximal trivial subobject. There is a natural action of $G$ on $f^*E=E\otimes_{\Oh_Y}\Oh_X$, given by the fact that $X\to Y$ is a $G$-torsor. This action respects the $\D_{X/k}$-module structure, in particular for every $\sigma\in G$ and every $E'\subset E$ we have that $\sigma(E')$ and $E'$ are isomorphic as $\D_{X/k}$-modules. But then by maximality $(f^*E)^0$ must be invariant under the $G$-action, hence by descent along torsors (see for example \cite[14.85]{Goertz}) $(f^*E)^0$ descends to $Y$ as a module, namely there exists $F\subset E$ such that $f^*F=(f^*E)^0$. To conclude, we must prove that $F$ is actually a sub-stratified bundle of $E$, but it is enough to check this after pullback to $X$, and this completes the proof.
\end{proof}

\begin{corollary}\label{cor:gal}
Let $f:X\to Y$ be a Galois alteration of Galois group $G$, generically \'etale. Then the sequences
\[\pi^{\Strat}(X)\to\pi^{\Strat}(Y)\to \pi(\mathcal{S}_f)\to 1\]
and
\[\pi^{\rs}(X)\to\pi^{\rs}(Y)\to \pi(\mathcal{S}^{\rs}_f)\to 1\]

is exact, where $\mathcal{S}_f$, respectively $\mathcal{S}_f^{\rs}$, is the full subcategory of $\Strat(X/k)$, respectively, of $\Strat^{\rs}(X/k)$, of objects that are trivialized by $f$.
\end{corollary}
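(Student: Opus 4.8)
The plan is to reduce the statement about a generically \'etale Galois alteration $f:X\to Y$ to the \'etale case handled by Lemma~\ref{lem:galois}, using the invariance of both fundamental groups under passage to a dense open subscheme. First I would pick a dense open $V\subset Y$ over which $f$ is finite \'etale with Galois group $G$, and set $U=f^{-1}(V)$, so that $U\to V$ is an \'etale Galois cover of group $G$. Choosing a rational point $u\in U(k)$ lying over $v=f(u)\in V(k)$, Lemma~\ref{lem:galois} gives an exact sequence
\[
1\to\pi^{\Strat}(U,u)\to\pi^{\Strat}(V,v)\to \pi(\mathcal{E}_{f|_U},v)\to 1
\]
and the right-exact sequence $\pi^{\rs}(U,u)\to\pi^{\rs}(V,v)\to \pi(\mathcal{E}^{\rs}_{f|_U},v)\to 1$, where $\mathcal{E}_{f|_U}$ (resp.\ $\mathcal{E}^{\rs}_{f|_U}$) is the full subcategory of $\Strat(U/k)$ (resp.\ $\Strat^{\rs}(U/k)$) of objects trivialized by $f|_U$. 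Here one should note that since $U$ and $V$ are dense opens of the smooth connected varieties $X$ and $Y$, the restriction functors induce faithfully flat maps $\pi^{\Strat}(U,u)\to\pi^{\Strat}(X,u)$, $\pi^{\Strat}(V,v)\to\pi^{\Strat}(Y,v)$ and likewise for $\pi^{\rs}$, by Proposition~\ref{prop:prop}~$(ii)$; this is what lets us transport the sequences from $(U,V)$ to $(X,Y)$.

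The key step is then to identify $\pi(\mathcal{E}_{f|_U},v)$ with $\pi(\mathcal{S}_f)$ and $\pi(\mathcal{E}^{\rs}_{f|_U},v)$ with $\pi(\mathcal{S}^{\rs}_f)$, i.e.\ to check that an object of $\Strat(X/k)$ (resp.\ $\Strat^{\rs}(X/k)$) is trivialized by $f$ if and only if its restriction to $U$ is trivialized by $f|_U$. The "only if" direction is trivial by restriction. For the converse, if $E\in\Strat(X/k)$ and $f^*E|_{U}$ is trivial, then $f^*E$ is a stratified bundle on $X$ whose restriction to the dense open $f^{-1}(U)$ is trivial; by Proposition~\ref{prop:prop}~$(ii)$ applied to $f^{-1}(U)\subset X$, the functor $\langle f^*E\rangle_\otimes\to \langle f^*E|_{f^{-1}(U)}\rangle_\otimes$ is an equivalence, hence $f^*E$ itself is trivial, so $E\in\mathcal{S}_f$ (resp.\ $\mathcal{S}^{\rs}_f$ in the regular singular case, using that the regular singular property is also stable under this restriction equivalence by Proposition~\ref{prop:prop}~$(ii)$). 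This also shows $\mathcal{S}_f\subset\langle E_0\rangle_\otimes$ for a suitable single object and that restriction gives an equivalence $\mathcal{S}_f\simeq\mathcal{E}_{f|_U}$ compatible with fiber functors, so the Tannaka duals agree.

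Combining the two ingredients: the exact sequence from Lemma~\ref{lem:galois} together with the faithful flatness of $\pi^{\Strat}(V,v)\to\pi^{\Strat}(Y,v)$ and $\pi^{\Strat}(U,u)\to\pi^{\Strat}(X,u)$ gives, after composing with the identifications $\pi(\mathcal{E}_{f|_U},v)\simeq\pi(\mathcal{S}_f)$, the right-exact sequence $\pi^{\Strat}(X)\to\pi^{\Strat}(Y)\to\pi(\mathcal{S}_f)\to 1$ (right-exactness being preserved under the quotient maps, and surjectivity onto $\pi(\mathcal{S}_f)$ following from Proposition~\ref{prop:crit}~$(ii)$ since $\mathcal{S}_f$ is a full Tannakian subcategory of $\Strat(Y/k)$). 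The argument for $\pi^{\rs}$ is identical, using the right-exact sequence of Lemma~\ref{lem:galois}. The main obstacle I anticipate is purely bookkeeping: making sure the various faithfully flat restriction maps and the Tannakian subcategory identifications are compatible with the chosen fiber functors and basepoints, so that the diagram of group schemes one writes down actually commutes; once that compatibility is set up the exactness is formal from Proposition~\ref{prop:crit}. Note that one does not expect left-exactness in general — even in the \'etale case Lemma~\ref{lem:galois} only gives it for $\pi^{\Strat}$ and for $\pi^{\rs}$ under $G=G^t$ — which is why the statement is phrased with only right-exactness.
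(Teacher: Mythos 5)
There is a genuine gap at the step where you identify $\pi(\mathcal{E}_{f|_U})$ with $\pi(\mathcal{S}_f)$. Full faithfulness of the restriction $\mathcal{S}_f\to\mathcal{E}_{f|_U}$ is fine, but essential surjectivity is not: an object of $\Strat(V/k)$ trivialized by the \'etale cover $U\to V$ has no reason to extend to a stratified bundle on all of $Y$ (the paper itself stresses exactly this in the proof of Proposition~\ref{prop:Kun}, describing $\mathcal{S}_f^{\rs}$ as the subcategory of $\langle f_*\Oh_{X'}\rangle_\otimes$ of objects \emph{defined on the whole space and not just on the \'etale locus}). Concretely, for the \'etale cover $U\to V$ one has $\pi(\mathcal{E}_{f|_U})\simeq G$ by \cite[Cor.~2.16]{Kindler/FiniteBundles}, whereas $\pi(\mathcal{S}_f)$ is in general a proper quotient of $G$. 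This is not a cosmetic issue: with only a faithfully flat map $\pi(\mathcal{E}_{f|_U})\twoheadrightarrow\pi(\mathcal{S}_f)$ instead of an isomorphism, your transport argument breaks down at the middle term. Writing $q:\pi^{\Strat}(V)\to\pi^{\Strat}(Y)$ for the (surjective) restriction map, exactness of the top row gives $\mathrm{im}\bigl(\pi^{\Strat}(X)\to\pi^{\Strat}(Y)\bigr)=q\bigl(\ker(\pi^{\Strat}(V)\to\pi(\mathcal{E}_{f|_U}))\bigr)$, while $\ker\bigl(\pi^{\Strat}(Y)\to\pi(\mathcal{S}_f)\bigr)=q\bigl(\ker(\pi^{\Strat}(V)\to\pi(\mathcal{S}_f))\bigr)$; the first kernel is contained in the second but the images under $q$ need not coincide, so you only get one inclusion.

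The way the paper avoids this is to argue object by object rather than with the full categories: for a single $E$ on $Y$, Proposition~\ref{prop:prop}~$(ii)$ gives an honest equivalence $\langle E\rangle_\otimes\simeq\langle E_{|V}\rangle_\otimes$ (an isomorphism $\pi(E_{|V})\simeq\pi(E)$, not merely a faithfully flat map), under which the subcategory of objects of $\langle E\rangle_\otimes$ trivialized by $f$ corresponds exactly to the subcategory of $\langle E_{|V}\rangle_\otimes$ trivialized by $f_{|U}$; hence the three-term sequence $1\to\pi(f^*E)\to\pi(E)\to\pi(\mathcal{E}_f)\to1$ for $E$ is isomorphic to the one for $E_{|V}$, which is exact by Lemma~\ref{lem:galois}, and one concludes by a limit over all $E$. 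Your reduction to the \'etale locus and your remark that only right-exactness is claimed are both correct; the proof can be repaired by replacing the global identification $\mathcal{S}_f\simeq\mathcal{E}_{f|_U}$ with this per-object argument. (Separately, note the source/target bookkeeping: objects trivialized by $f$ live on $Y$, and for $E\in\Strat(Y/k)$ one restricts $f^*E$ to $U=f^{-1}(V)\subset X$, not ``$f^*E$ for $E\in\Strat(X/k)$'' as written.)
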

\begin{proof}
By Proposition~\ref{prop:prop} $(iii)$ the sequence
\[1\to\pi(f^*E)\to\pi(E)\to \pi(\mathcal{E}_f)\to 1\]
is exact, because it is so when restricted to the \'etale locus of $f$ by the previous Lemma, and we can conclude by a limit argument on all objects in $\Strat(X/k)$, respectively in $\Strat^{\rs}(X/k)$.
\end{proof}
\section{K\"unneth formula for $\pi^{\rs}$}\label{sec:Kun}
In this section we prove the main technical proposition of the article, namely that the regular singular fundamental group satisfies a K\"unneth formula for the product of any two smooth varieties.

\begin{proposition}\label{prop:Kun}
Let $X$ and $Y$ two smooth connected varieties over an algebraically closed field $k$ and $x\in X(k)$, $y\in Y(k)$. Then 
\[\pi^{\rs}(X\times Y,x\times y)\simeq \pi^{\rs}(X,x)\times\pi^{\rs}(Y,y).\] 	
\end{proposition}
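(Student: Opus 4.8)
The strategy is the standard Tannakian one for proving a Künneth formula for a fundamental group scheme: translate the statement about $\pi^{\rs}$ into a statement about the Tannaka categories $\Strat^{\rs}(X\times Y)$, $\Strat^{\rs}(X)$ and $\Strat^{\rs}(Y)$, and verify the three conditions of Proposition~\ref{prop:crit} $(iii)$ for the sequence
\[\pi^{\rs}(X\times Y)\xrightarrow{(p_1^*,p_2^*)^\vee}\pi^{\rs}(X,x)\times\pi^{\rs}(Y,y),\]
where the map comes from the two projections $p_1,p_2$ and the external tensor product functor $\boxtimes:\Strat^{\rs}(X)\times\Strat^{\rs}(Y)\to\Strat^{\rs}(X\times Y)$, $(E,F)\mapsto p_1^*E\otimes p_2^*F$. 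Equivalently, by the criterion for products of affine group schemes, it suffices to prove: (a) every object of $\Strat^{\rs}(X\times Y)$ is a subquotient of a sum of objects of the form $p_1^*E\otimes p_2^*F$ (this gives that the map is faithfully flat and identifies the target as a quotient); (b) the pullbacks $p_1^*$, $p_2^*$ are fully faithful with image stable under subobjects (so each factor includes as a closed subgroup), which is already known from Proposition~\ref{prop:prop} $(ii)$ applied to the fibers $X\times\{y\}$ and $\{x\}\times Y$; and (c) the compatibility of trivial subobjects needed for exactness in the middle. The first goal is therefore to reduce everything to the key claim: for $E\in\Strat^{\rs}(X\times Y)$, the natural map $\bigoplus (p_1^*E_i\otimes p_2^*F_i)\to E$ obtained from the $0$-th Gauß--Manin construction is surjective, where the $E_i$ are built as relative $H^0_{\str}$ along $p_1$ twisted by suitable bundles on $Y$.

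The heart of the argument is the following: fix $E\in\Strat^{\rs}(X\times Y)$ and consider $p_1:X\times Y\to X$ with the good compactification $\overline{X}\times Y$ (or $\overline{X}\times\overline Y$) relative to $X$, to which we can apply Proposition~\ref{prop:bc}. Set $H_1(E)=p_{1*}E^{\nabla_{(X\times Y)/X}}\in\Strat(X)$; by the non-proper base change of Proposition~\ref{prop:bc}, for each $x'\in X(k)$ the fiber $H_1(E)_{x'}$ is the maximal trivial sub-stratified bundle of $E|_{\{x'\}\times Y}$, and $p_1^*H_1(E)\hookrightarrow E$ is the maximal subobject pulled back from $X$. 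Dually one gets $H_2(E)$. To produce enough objects of the form $p_1^*E'\otimes p_2^*F$ inside $E$, the standard trick (this is exactly how Ph\`ung runs the proper case) is to apply this construction not to $E$ itself but to $E\otimes p_2^*(F^\vee)$ for $F$ ranging over $\langle p_{2,y}^* E|_{\{x\}\times Y}\cdots\rangle$ — more precisely over objects of $\Strat^{\rs}(Y)$ appearing in the relevant fibers — so that $H_1(E\otimes p_2^*F^\vee)$ detects the "$F$-isotypic part" of $E$ along the $Y$-direction; summing the resulting maps $p_1^*H_1(E\otimes p_2^*F^\vee)\otimes p_2^*F\to E$ over a finite generating set of such $F$ gives a surjection, because on each fiber $\{x'\}\times Y$ the image is all of $E|_{\{x'\}\times Y}$ (every stratified bundle on $Y$ is a subquotient of a sum of objects in $\langle\cdot\rangle_\otimes$, and the maximal-trivial-subobject characterization picks out each isotypic component). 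One must check that $H_1(E\otimes p_2^*F^\vee)$ is again regular singular, which follows from Proposition~\ref{prop:bc} together with the fact that $\Strat^{\rs}$ is closed under tensor, pullback, and the formation of subobjects. Condition (c) for Proposition~\ref{prop:crit} $(iii)$ is handled the same way: the maximal trivial subobject of $E$ (as an object of $\Strat^{\rs}(X\times Y)$) is $p_1^*H_1(p_{2*}\cdots)$-type and its restriction along each projection is exactly the maximal trivial subobject there, again by the base-change statement, so trivial subobjects are detected compatibly and descend.

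The main obstacle, and the reason the whole machinery of Sections~\ref{sec:max} and \ref{sec:GM} is needed, is precisely that in positive characteristic the Gauß--Manin connection need not extend to a stratification and need not commute with base change to fibers in the non-proper setting — which is what would make the fiberwise surjectivity argument collapse. This is exactly what Proposition~\ref{prop:bc} repairs, at the cost of restricting to \emph{regular singular} $E$ and to fibers $s$ over which $\overline X_s$ meets the smooth locus of each boundary component; since $\overline X=\overline X\times Y$ (or $\overline X\times \overline Y$) and $k$ is algebraically closed, the relevant points are all $k$-points of $X$ (resp.\ $Y$) and the hypothesis is automatic, so this restriction is harmless here. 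A secondary technical point is to make sure the finitely many $F$'s and $E_i$'s can be chosen inside $\langle E\rangle_\otimes$-type categories so that the constructed objects genuinely live in $\Strat^{\rs}(X)$ and $\Strat^{\rs}(Y)$ and assemble into the comparison morphism of group schemes rather than merely a fiberwise statement; this is routine once the fiberwise surjectivity and the base-change compatibility are in hand, using faithful flatness criteria of Proposition~\ref{prop:crit} $(i)$ to pass from "generates by subquotients" to the isomorphism of Tannaka duals.
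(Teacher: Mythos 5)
Your first half is essentially the paper's first half: reduce to the exactness of $\pi^{\rs}(X)\to\pi^{\rs}(X\times Y)\to\pi^{\rs}(Y)$ via the criterion of Proposition~\ref{prop:crit}~$(iii)$, and verify conditions $(a)$ and $(b)$ using the $0$-th Gau\ss--Manin stratification $g^*g_*E^{\nabla_{X\times Y/Y}}$ together with the non-proper base change of Proposition~\ref{prop:bc} (plus the observation that this subobject is again regular singular). That part is sound.

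The genuine gap is your claim that the compactification hypothesis is ``automatic''. Proposition~\ref{prop:bc} requires a good compactification of $X\times Y\to Y$ \emph{relative to} $Y$, i.e.\ a smooth proper $\overline{X}\supset X$ with snc boundary. Over a field of positive characteristic such an $\overline{X}$ is not known to exist for a general smooth variety $X$ --- resolution of singularities is open --- so you have only proved the statement under the additional hypothesis that $X$ (or $Y$) admits a good compactification. What you checked to be ``harmless'' is the condition that the fibers meet the smooth locus of the boundary components, which is indeed automatic for a product compactification; but the existence of the compactification itself is the real obstruction, and your argument never addresses it. The paper closes this gap with a second, entirely separate step: by de Jong's theorem there is a Galois alteration $f:X'\to X$ with $X'$ admitting a good compactification; one factors $f$ through a purely inseparable part (killed by the topological invariance of $\Strat^{\rs}$, \cite[Thm.~5.5]{Kindler/Evidence}) and a generically \'etale Galois part, then uses Lemma~\ref{lem:galois}/Corollary~\ref{cor:gal} to produce exact sequences
\[\pi^{\rs}(X')\to\pi^{\rs}(X)\to \pi(\mathcal{S}^{\rs}_f)\to 1 \quad\text{and}\quad \pi^{\rs}(X'\times Y)\to\pi^{\rs}(X\times Y)\to \pi(\mathcal{S}^{\rs}_{\tilde f})\to 1,\]
identifies $\pi(\mathcal{S}^{\rs}_f)\simeq\pi(\mathcal{S}^{\rs}_{\tilde f})$ via \cite[Cor.~2.16]{Kindler/FiniteBundles}, and concludes by a diagram chase from the already-settled case of $X'$. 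Without some such descent mechanism your proof does not cover arbitrary smooth connected $X$ and $Y$, which is the whole point of the proposition (and of its advertised corollary, a compactification-free proof of the K\"unneth formula for $\pi^t$).
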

\begin{proof}
The projections induce a natural surjective morphism
\[ \pi^{\rs}(X\times Y,x\times y)\to \pi^{\rs}(Y,y)\times \pi^{\rs}(X,x)\]
which we want to prove is an isomorphism. We will drop the base points for notation simplicity.

It is enough to prove that the sequence
\[\pi^{\rs}(X)\to\pi^{\rs}(X\times Y)\to\pi^{\rs}(Y)\]
is exact, and for this we will make use of the exactness criterion stated in Proposition~\ref{prop:crit}$(iii)$. Note that $(c)$ is clear, so we only need to prove $(a)$ and $(b)$. Assume first that $X$ admits a good compactification $\overline{X}$, then $(a)$ and $(b)$ both follow from Proposition~\ref{prop:bc}, if one additionally remarks that if $g:X\times Y\to Y$ denotes the projection then $g^* g_*E^{\nabla_{X\times Y/Y}}$ is regular singular if and only if $g_*E^{\nabla_{X\times Y/Y}}$ is.

Now to the general case.  By de Jong's Galois alteration theorem (\cite{deJong/Curves}), there exists a Galois alteration $f:X'\to X$ of group $G$, admitting a good compactification, in particular over $X'$ Proposition~\ref{prop:bc} holds. Now we can decompose $f$ into $X'\to X''\to X$, where $X''\to X$ is generically \'etale Galois cover and $K(X')/K(X'')$ is purely inseparable, in particularly $X'\to X''$ is generically an universal homeomorphism. Using the criterion in  \cite[Lemma~5.2]{KindlerBattiston} we can reduce to simply check exactness at the level of monodromy groups of single stratified bundles. Using Proposition~\ref{prop:prop}$(ii)$ we can shrink $X'$ and without loss of generality  assume that $X'\to X''$ is a universal homeomorphism. As the category of regular singular stratified bundles is topologically invariant by \cite[Thm.~5.5]{Kindler/Evidence}, if the theorem holds for $X'$ it must hold for $X''$ as well, hence without loss of generality we can assume that $X'\to X$ is separable Galois.

Consider now the subcategory $\mathcal{S}_f^{\rs}\subset\Strat^{\rs}(X)$ of objects that are trivialized by $f$ and the same for $\mathcal{S}_{\tilde{f}}^{\rs}\subset\Strat^{\rs}(X\times Y)$. After restricting these subcategory to the \'etale locus of $f$ and $\tilde{f}$, by \cite[Cor.~2.16]{Kindler/FiniteBundles} we can identify them with full subcategory of $\langle f_*\Oh_{X'}\rangle_\otimes$, respectively of $\langle \tilde{f}_*\Oh_{X'\times Y}\rangle_\otimes$. Again by  \cite[Cor.~2.16]{Kindler/FiniteBundles}, the pullback functor 
\[\langle f_*\Oh_{X'}\rangle_\otimes\to\langle \tilde{f}_*\Oh_{X'\times Y}\rangle_\otimes\]
is an equivalence of categories, as both $f$ and $\tilde{f}$ are Galois covers of group $G$.
We can be more explicit: $\mathcal{S}_f^{\rs}$ is the full subcategory of $\langle f_*\Oh_{X'}\rangle_\otimes$ of all regular singular objects that are defined on the whole $X$ (and not just on the \'etale locus of $f$) and similarly for $\mathcal{S}_{\tilde{f}}^{\rs}$. But then the pullback functor induces an equivalence of categories between them. Let $G'=\pi(\mathcal{S}_f^{\rs},x)=\pi(\mathcal{S}^{\rs}_{\tilde{f}},x\times y)$, then Corollary~\ref{cor:gal} implies that the commutative diagram
\[\xymatrix{
1\ar[r]&\pi^{\rs}(X')\ar[d]\ar[r]&\pi^{\rs}(X'\times Y)\ar[r]\ar[d]&\pi^{\rs}(Y)\ar[r]\ar[d]^{\id}&1\\
1\ar[r]&\pi^{\rs}(X)\ar[d]\ar[r]&\pi^{\rs}(X\times Y)\ar[r]\ar[d]&\pi^{\rs}(Y)\ar[r]\ar[d]&1\\
1\ar[r]&G'\ar[r]^{\id}\ar[d]& G'\ar[r]\ar[d]& 1\\
&1 &1
}\]
has all rows and columns exact except maybe for the second row. Then, a simple diagram chasing show that the middle row needs to be exact as well, which concludes the proof.
\end{proof}

\begin{remark} The previous proof also gives a proof of the K\"unneth formula for the tame fundamental group which is independent from resolution of singularities. If $X$ and $Y$ admit a good compactification, such a proof can be found in \cite{Hoshi}.
\end{remark}
\section{Gieseker's conjecture for homogeneous spaces}\label{sec:conj}
\subsection{The commutativity of $\pi^{\rs}$ for homogeneous spaces}
The results in this section all apply to the tame fundamental group as well, generalizing some  of the results in \cite{BrionSzamuely}.
\begin{proposition}\label{prop:comm}
 Let $X$ be a smooth connected group scheme over a field $k$ and $x\in X(k)$, then $\pi^{\rs}(X,x)$ is commutative. 
\end{proposition}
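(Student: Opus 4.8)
The plan is to use the classical argument of Grothendieck that deduces commutativity of the fundamental group of a group scheme from a Künneth formula, now available for $\pi^{\rs}$ by Proposition~\ref{prop:Kun}. First I would reduce to the case where $X$ is connected (already assumed) and observe that the multiplication map $m\colon X\times X\to X$, the two projections $p_1,p_2\colon X\times X\to X$, and the unit section all induce morphisms of regular singular fundamental groups (using functoriality of $\pi^{\rs}$ with respect to morphisms of pointed varieties, with base point $x\times x$ on $X\times X$ and $x$ on $X$, after translating so that $x$ is the identity element — we may assume $x=e$ since translations are isomorphisms of $X$). The key point is that under the identification
\[
\pi^{\rs}(X\times X,\,e\times e)\;\xrightarrow{\ \sim\ }\;\pi^{\rs}(X,e)\times\pi^{\rs}(X,e)
\]
of Proposition~\ref{prop:Kun}, the two projections $p_1,p_2$ correspond to the two coordinate projections of the product group, so that the induced map $m_*\colon \pi^{\rs}(X,e)\times\pi^{\rs}(X,e)\to\pi^{\rs}(X,e)$ satisfies $m_*(g,1)=g$ and $m_*(1,h)=h$ (because $m\circ(\mathrm{id},e)=\mathrm{id}=m\circ(e,\mathrm{id})$ on $X$, and these sections are compatible with base points).

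Next I would run the standard Eckmann–Hilton argument: for a group homomorphism $\mu\colon \Gamma\times\Gamma\to\Gamma$ with $\mu(g,1)=g$ and $\mu(1,h)=h$, one computes for any $g,h\in\Gamma$
\[
gh=\mu(g,1)\mu(1,h)=\mu\bigl((g,1)(1,h)\bigr)=\mu(g,h)=\mu\bigl((1,h)(g,1)\bigr)=\mu(1,h)\mu(g,1)=hg,
\]
so $\Gamma$ is commutative. Applying this with $\Gamma=\pi^{\rs}(X,e)$ and $\mu=m_*$ gives the claim. Since $\pi^{\rs}(X,x)$ is an affine group scheme, "commutative" should be understood functorially (the group functor it represents takes values in abelian groups), and the Eckmann–Hilton computation above is valid on $R$-points for every $k$-algebra $R$, hence gives commutativity of the group scheme; equivalently one checks the two composites $\pi^{\rs}(X)\times\pi^{\rs}(X)\rightrightarrows\pi^{\rs}(X)$ given by multiplication and its opposite agree.

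The only genuine content beyond Proposition~\ref{prop:Kun} is the compatibility of the Künneth isomorphism with the projection and section morphisms, i.e.\ that the isomorphism of Proposition~\ref{prop:Kun} is the one induced by $(p_1{}_*,p_2{}_*)$ — but this is exactly how that isomorphism was constructed (the morphism being proven to be an isomorphism is the canonical one coming from the two projections), so there is essentially nothing to check. I therefore do not expect a serious obstacle; the proof is a formal consequence of the Künneth formula together with functoriality of $\pi^{\rs}$, and is short. If one wants to avoid translating the base point, one can instead note that $\pi^{\rs}$ is independent of the choice of base point up to (inner) isomorphism for connected $X$, which suffices since commutativity is insensitive to such isomorphisms.
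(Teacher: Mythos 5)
Your argument is the same as the paper's: Grothendieck's classical observation that the K\"unneth formula of Proposition~\ref{prop:Kun} makes $\pi^{\rs}(X)$ a group object in the category of affine group schemes, followed by Eckmann--Hilton. The one genuine omission is the base field: the proposition is stated over an arbitrary field $k$, whereas Proposition~\ref{prop:Kun} is only proved for $k$ algebraically closed, so you cannot invoke it directly. The paper first passes to an algebraic closure $\bar{k}$, using that $\pi^{\rs}(X\times_{\Spec k}\Spec\bar{k})\to \pi^{\rs}(X)\times_{\Spec k}\Spec\bar{k}$ is faithfully flat (citing \cite{Battiston/Monodromy}); commutativity of $\pi^{\rs}(X_{\bar{k}})$ then forces commutativity of its quotient $\pi^{\rs}(X)_{\bar{k}}$, and hence of $\pi^{\rs}(X)$, since commutativity of a group scheme can be checked after a faithfully flat extension of the base field. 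You should add this reduction; everything else in your write-up (the identification of the K\"unneth isomorphism with $(p_{1*},p_{2*})$, the unit-section compatibilities $m\circ(\id,e)=\id=m\circ(e,\id)$, and the $R$-points formulation of Eckmann--Hilton) is correct and matches, indeed slightly expands on, what the paper leaves implicit.
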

\begin{proof}
Note that there exists always a rational point due to the existence of the identity section.
Let $\bar{k}$ be an algebraically closed field extension of $k$, then $\pi^{\rs}(X\times_{\Spec k}\Spec\bar{k})\to \pi^{\rs}(X)\times_{\Spec k}\Spec\bar{k}$ is faithfully flat (see \cite[Lemma~4.8]{Battiston/Monodromy} together with \cite[Prop.~3.7]{Battiston/Monodromy} ). Hence we can without loss of generality assume that $k$ is algebraically closed.
The proof follows a classical argument which is originally due, as far as the author knows, to Grothendieck: by Proposition \ref{prop:Kun}, $\pi^{\rs}(X\times X)\simeq\pi^{\rs}(X)\times\pi^{\rs}(X)$, in particular as $X$ is a group object in the category of $k$-schemes, $\pi^{\rs}(X)$ is a group object in the category of $k$-group schemes, and thus is a commutative group scheme (this is well known for abstract groups and naturally extends to group schemes simply by seeing them as group-valued functors).
\end{proof}
In order to prove a similar theorem for homogenous spaces, we introduce a more general class of morphism, namely those who are fppf local product of two varieties:
\begin{definition} Let $f:X\to Y$ be a scheme of $k$-schemes of finite type. We say that $f$ is \emph{fppf-split} if there is a fppf-cover $U\to Y$ and a $k$-scheme $Z$ such that $X\times_Y U\simeq Z\times_{\Spec k} U\xrightarrow{\pi} U$ where the map is the projection to the same factor. We say that $f$ is \emph{generically} fppf-split if there exists a dense Zariski open $V\subset Y$ such that $X\times_YV\to V$ is fppf-split.
\end{definition}

\begin{example} The main example to keep in mind is when $f:X\to Y$ is (generically) a $H$-torsor for some (smooth connected) group $H$. Note that in this situation the fppf cover $U\to Y$ can be taken to be $f$ itself (see for example \cite[Tag 04TZ]{stacks-project}).
\end{example}

\begin{lemma}\label{lem:homsp}
Let $X$, $Y$ be two smooth connected varieties over and $f:X\to Y$ be any generically fppf-splitmap with smooth geometrically connected fibers. Let $x\in X(k)$ and $y=f(x)$, then the induced map  
\[\pi^{\rs}(X,x)\to\pi^{\rs}(Y,y)\]
 is faithfully flat. In particular, if $X=G$ is a smooth connected group and $Y=G/H$ is a homogeneous space with $H\subset G$ a connected subgroup, then $\pi^{\rs}(Y)$ is commutative.
\end{lemma}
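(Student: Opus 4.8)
The plan is to reduce the faithful flatness of $\pi^{\rs}(X,x)\to\pi^{\rs}(Y,y)$ to the K\"unneth formula via fppf descent, and then derive the commutativity statement as a consequence together with Proposition~\ref{prop:comm}. By Proposition~\ref{prop:crit}$(i)$, faithful flatness is equivalent to the statement that the pullback functor $f^*:\Strat^{\rs}(Y/k)\to\Strat^{\rs}(X/k)$ is fully faithful with essential image closed under subobjects. First I would shrink $Y$ to the dense open $V$ over which $f$ is fppf-split, using Proposition~\ref{prop:prop}$(ii)$, which does not change either fundamental group; so we may assume there is an fppf cover $U\to Y$ with $X\times_Y U\simeq Z\times_k U$ over $U$ for some smooth connected $k$-scheme $Z$ (and $Z$ has a rational point, since the fibers of $f$ do).

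The core of the argument is then to show $f^*$ is fully faithful and its image is closed under subobjects. For full faithfulness, one computes $\Hom$ of stratified bundles after pulling back along the faithfully flat cover $g\colon X\times_Y U \to X$: a horizontal morphism $f^*E\to f^*E'$ that becomes, after base change, a morphism over $Z\times_k U$, must by the K\"unneth formula (Proposition~\ref{prop:Kun}) and the fact that $\pi^{\rs}$ of $U$-constant families is controlled by $U$, descend to a morphism $E\to E'$; the key input is that $\Hom_{\Strat^{\rs}(Z\times_k U)}(\pi_U^*(-),\pi_U^*(-))$ sees only the $U$-factor, which is exactly where Proposition~\ref{prop:Kun} enters. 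For closure under subobjects, given $F'\subset f^*E$ a regular singular sub-stratified bundle, its pullback to $X\times_Y U \simeq Z\times_k U$ is a sub-object of $\pi_U^* E_U$ (where $E_U$ is the descent datum for $E$ along $U\to Y$); applying the K\"unneth formula to $Z\times_k U$, every sub-object of a pullback from $U$ is again a pullback from $U$, hence $F'$ pulled back to the cover descends along the two projections to a subobject of $E$ on $Y$ by fppf descent for quasi-coherent sheaves with stratification. Finally, for the last sentence: if $X=G$ is a smooth connected group and $Y=G/H$ with $H$ connected, then $G\to G/H$ is an $H$-torsor (hence generically, in fact globally, fppf-split by the Example, with $U=G$ itself and $Z=H$) with smooth geometrically connected fibers; so $\pi^{\rs}(G)\to\pi^{\rs}(G/H)$ is faithfully flat, and since $\pi^{\rs}(G)$ is commutative by Proposition~\ref{prop:comm}, so is its faithfully flat quotient $\pi^{\rs}(G/H)$.

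I expect the main obstacle to be making the descent step fully rigorous: fppf descent for stratified bundles is not stated in the excerpt, and one must check that the K\"unneth isomorphism is compatible with the two projection maps $U\times_Y U \rightrightarrows U$ appearing in the descent datum, i.e.\ that the identification "every subobject/morphism of a $U$-pullback comes from $U$" is functorial in $U$ in the way needed to glue. A clean way to handle this is to phrase everything in Tannakian terms: the fppf-split hypothesis gives, after the faithfully flat base change $U\to Y$, an identification of categories that on fundamental groups reads $\pi^{\rs}(X\times_Y U)\simeq \pi^{\rs}(Z)\times \pi^{\rs}(U)$ and $\pi^{\rs}(X\times_Y U)\to \pi^{\rs}(U)$ is the projection, which is visibly faithfully flat; one then descends faithful flatness of $\pi^{\rs}(X,x)\to\pi^{\rs}(Y,y)$ from the cover using \cite[Lemma~4.8]{Battiston/Monodromy} and \cite[Prop.~3.7]{Battiston/Monodromy}, exactly as in the reduction at the start of the proof of Proposition~\ref{prop:comm}. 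The delicate point is that faithful flatness of a morphism of affine group schemes can be checked after a faithfully flat base change on the base field, but here the "base change" is along a cover of $Y$, not of $\Spec k$; so one needs the descent to be carried out at the level of the Tannakian categories of stratified bundles, using that $f^*$ becoming fully faithful with image closed under subobjects is an fppf-local condition on $Y$. Once that local-to-global principle is in place, the rest is formal.
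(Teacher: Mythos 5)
Your strategy coincides with the paper's: reduce to Proposition~\ref{prop:crit}$(i)$, shrink $Y$ to the fppf-split locus via Proposition~\ref{prop:prop}$(ii)$, pull back along the trivializing cover $U\to Y$, apply the K\"unneth formula on $Z\times_{\Spec k} U$ to conclude that subobjects and morphisms of pullbacks are themselves pulled back from $U$, and then descend along $U\to Y$; the commutativity statement is deduced from Proposition~\ref{prop:comm} exactly as you say. However, the step you yourself flag as the ``main obstacle'' --- producing an actual descent datum for $\tilde F\in\Strat^{\rs}(U)$ relative to $U\to V$, compatibly with the two projections $U\times_V U\rightrightarrows U$ --- is the real content of the proof, and your proposal does not close it: the first fix is only gestured at, and the second (descending faithful flatness of the group-scheme morphism along a cover of $Y$ rather than of $\Spec k$) is, as you observe, not a situation covered by the standard Tannakian base-change results, so it cannot simply be invoked.

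The paper closes this gap with a concrete device you should record. The pullback of $F$ to $Z\times_{\Spec k}U$ is on the one hand identified with the pullback of $\tilde F$ along the projection to $U$ (this is where the K\"unneth exact sequence enters), and on the other hand, being pulled back from $X_V$, it carries a tautological descent datum with respect to the cover $Z\times_{\Spec k}U\to X_V$. Choosing a section $s:U\to Z\times_{\Spec k}U$ (available once one has reduced to $k$ algebraically closed) induces a map $U\times_V U\to (Z\times U)\times_{X_V}(Z\times U)$ over the two projections, and restricting the tautological descent datum along $s$ produces a descent datum for $\tilde F$ with respect to $U\to V$, with the cocycle condition inherited. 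Two further points: the paper descends only the underlying coherent subsheaf and then checks horizontality after the faithfully flat base change, so no ``fppf descent for stratified bundles'' is needed, only ordinary descent of quasi-coherent sheaves; and before descending one must normalize $U$ (connected, reduced, smooth, faithfully flat over a dense open of $Y$), which is arranged by generic flatness and shrinking, again harmless because of Proposition~\ref{prop:prop}$(ii)$.
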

\begin{proof}We drop the base points from the notations in the proof.
As in the previous proposition, we can again base change to the algebraic closure of $k$ without loss of generality. Moreover if $U$ is any dense open in $Y$, the induced morphism $\pi^{\rs}(U)\to\pi^{\rs}(Y)$ is faithfully flat, hence we can without loss of generality assume that $f$ is fppf-split.

The rest of the proof is a repeated use of Proposition~\ref{prop:Kun} and fppf-descent: the fibers are smooth and connected, and if $U\to Y$ is a fppf-cover such that $X\times_Y U\simeq Z\times U\to U$, then $Z$ is smooth and connected hence, by fppf descent (\cite[Tag 02VL]{stacks-project}), so is $f$, in particular  $f$ is fppf.

In order to prove that $\pi^{\rs}(X)\to\pi^{\rs}(Y)$ is faithfully flat, by Proposition~\ref{prop:crit} $(i)$ we need to show that the pullback map $f^*:\Strat^{\rs}(Y)\to\Strat^{\rs}(X)$ is fully faithful and that for every $E\in\Strat^{\rs}(Y)$ and $F\subset f^*E$, there exists $F'\subset E$ such that $F=f^*F'$.
We start with the latter: let $F\subset f^*E$, then it is enough to prove that $F$ descends as coherent sheaf, as checking its invariance under the $\mathcal{D}_{Y/k}$-action can be done after faithfully flat base change. As the restriction functor $\langle E\rangle_{\otimes}\to\langle E_{|V}\rangle_\otimes$ to any dense open $V$ is an equivalence of categories (see Proposition~\ref{prop:prop} $(ii)$), we can work with any dense open of any connected component of $U$ and its (open) image in $Y$ in particular we can assume that $U$ is connected. Let $U^{red}$ be the underlying reduced subscheme of $U$, then as $U^{red}\to Y$ is again of finite presentation and quasi compact, by generically flatness (\cite[Thm.~14.5]{Goertz}) we can without loss of generality assume that $U$ is a connected reduced faithfully flat cover of a dense open $V\subset Y$ trivializing the restriction $f=f_{|U}:X\times_VY=X_V\to V$. By shrinking again we can also achieve that $U$ is smooth.
We will still denote $F$ its restriction to $X_V$ and similarly for $E$.

Now, by Proposition~\ref{prop:Kun}, we have an exact sequence 
\[1\to\pi^{\rs}(Z)\to\pi^{\rs}(X\times_V U\simeq Z\times_{\Spec k} U)\to\pi^{\rs}(U)\to 1\]
in particular we know that after pulling back $F$ along $X\times_V U\to X_V$ it indeed comes from $\tilde{F}\in\Strat^{\rs}(U)$. As $U\to V$ is faithfully flat, hence of effective descent, to prove that $\tilde{F}$ is defined over $U$ it suffices to exhibit a descent data with respect to $U\to V$, that is an isomorphism $p_1^*\tilde{F}\simeq p_2^*\tilde{F}$ respecting the usual cochain condition, where $p_1,p_2:U\times_V U\to U$ are the natural projections. Let $s:U\to Z\times_{\Spec k} U$ be any section (recall that we are assuming that $k$ is algebraically closed), then the $p_i$ sits in the following diagram, where all squares are Cartesian:
\[
\xymatrix{
U\times_V U\ar[r]\ar[d]^{p_1}\ar@/^.9pc/[rrr]^{p_2}&(Z\times U)\times_{X_V}(Z\times U)\ar[d]^{q_1}\ar[r]^(.65){q_2}&Z\times_{\Spec k} U\ar[d]^{\pi_1}\ar[r]_{\pi_2}&U\ar[d]\\
U\ar[r]^s& Z\times_{\Spec k} U\ar[r]& X_V\ar[r]& V
}.\]
Now, $\pi_2^*F'=\pi_1^*\tilde{F}$ comes with natural descent data with respect to $Z\times{\Spec k} U\to X_V$. Pulling back this descent data via the section $s$ we get a descent data for $F'$ with respect to $U\to V$ and by fppf descent, $F'\subset E$ such that $f^*F'=F$.
In order to prove the fully faithfulness of $f^*:\Strat^{\rs}(Y)\to\Strat^{\rs}(X)$ we can use similar arguments: indeed if $E,F\in\Strat^{\rs}(Y)$ and $g:f^*E\to f^*F$ is a morphism of $\D_{X/k}$-modules, it is enough to check that $g$ is defined on $Y$ as morphism of coherent sheaves, and the same machinery of fppf descent applies.

For the last part of the lemma, notice that under these conditions $G/X$ is connected and by \cite[VI, Thm.~3.2]{SGA3} it is also smooth, and the existence of a rational point is automatic, in particular it makes sense to consider its regular singular stratified fundamental group. 
As $\pi^{\rs}$ is topologically invariant (see \cite[Thm.~5.5]{Kindler/Evidence}) we can assume without loss of generality, like in \cite[Sec.~3]{BrionSzamuely}, that $H$ is a reduced, hence smooth, subgroup of $G$.
Moreover by \cite[VI, Thm.~3.2]{SGA3} $f:G\to X$ is a fppf $H$-torsor, hence there exist a fppf cover $U\to X$ such that $id\times f:U\times_X G\to U$ is isomorphic to the projection $U\times_{\Spec k} H\to U$ and  Proposition~\ref{prop:comm} completes the proof.

\end{proof}

\begin{remark}
If $X\simeq G/H$ with $H$ not connected then one cannot in general hope that $H$ is abelian: if $H$ is finite \'etale we have that $X\to G/H$ being an $H$-torsor, is an \'etale Galois cover. In particular by Lemma~\ref{lem:galois} if $H$ moreover corresponds to a tame cover of $G/H$, we get an short exact sequence
\[1\to\pi^{\rs}(G)\to\pi^{\rs}(X)\to H\to 1\]
and if $H$ is not abelian the same must hold for $\pi^{\rs}(X)$. Still, we  will be able show in the proof of Theorem~\ref{thm:conj} that if $X$ is a connected homogenous space, $\pi^{\rs}(X)$ is always an extension of a commutative group scheme by a finite one.
\end{remark}

\subsection{The abelian unipotent part of $\pi^{\rs}$}
We need a few generalizations of the results obtained in \cite{DosSantos} and in \cite{Kindler/Evidence}. Note that when $k$ is algebraically closed, all choices of a closed point give isomorphic Tannaka duals, hence we can omit the base point from the notation.
\begin{proposition}[{compare with \cite[Thm.~15]{DosSantos}}]\label{prop:uni}
Let $X$ be a smooth connected variety over $k$ algebraically closed, then the largest abelian unipotent quotient of $\pi^{\rs}(X)$ is profinite, and correspond to the largest unipotent abelian quotient of $\pi^{t}(X)$.
\end{proposition}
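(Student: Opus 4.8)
The plan is first to reduce the second assertion to the first, and then to prove the first — that $\pi^{\rs}(X)^{\mathrm{ab,uni}}$ is profinite — by showing that $\pi^{\rs}(X)$ has no quotient isomorphic to $\mathbb{G}_a$, along the lines of the proof of Proposition~\ref{prop:Kun}. Throughout I write $\Pi^{\mathrm{ab,uni}}$ for the largest abelian unipotent quotient of an affine $k$-group scheme $\Pi$. Since by \cite[Cor.~5.6]{Kindler/FiniteBundles} the group $\pi^{t}(X)$ is the profinite completion of $\pi^{\rs}(X)$, the group $\pi^{t}(X)^{\mathrm{ab,uni}}$ is the largest profinite quotient of $\pi^{\rs}(X)^{\mathrm{ab,uni}}$; hence once the latter is known to be profinite, the two agree and the second assertion follows.

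To prove that $\pi^{\rs}(X)^{\mathrm{ab,uni}}$ is profinite I would use that a pro-algebraic abelian unipotent $k$-group scheme is profinite if and only if it has no quotient isomorphic to $\mathbb{G}_a$: a positive-dimensional commutative unipotent group over the algebraically closed field $k$ surjects onto $\mathbb{G}_a$ (for connected such groups via a filtration with graded pieces $\mathbb{G}_a$, in general by commutativity), while a profinite group scheme clearly has no such quotient. Composing a hypothetical surjection $\pi^{\rs}(X)\twoheadrightarrow\mathbb{G}_a$ with the faithful $2$-dimensional unipotent representation of $\mathbb{G}_a$ produces a non-split exact sequence $0\to\Oh_X\to E\to\Oh_X\to 0$ in $\Strat^{\rs}(X/k)$ with $\pi(E,x)=\mathbb{G}_a$; conversely, as every closed subgroup scheme of $\mathbb{G}_a$ is either finite or all of $\mathbb{G}_a$, any non-split self-extension of $\Oh_X$ with infinite monodromy has monodromy $\mathbb{G}_a$ and yields such a surjection. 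So it suffices to prove that every non-split self-extension of $\Oh_X$ in $\Strat^{\rs}(X/k)$ has finite monodromy.

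For this one may shrink $X$ freely: by Proposition~\ref{prop:prop}(ii) the monodromy of a fixed bundle is unchanged by restriction to a dense open $U\subset X$, and a $\mathbb{G}_a$-quotient of $\pi^{\rs}(X)$ pulls back to one of $\pi^{\rs}(U)$. Then, exactly as in the proof of Proposition~\ref{prop:Kun}, de Jong's alteration theorem \cite{deJong/Curves} produces, after shrinking, a factorization $X'\to X''\to X$ in which $X'\to X''$ is a universal homeomorphism, $X''\to X$ is a separable Galois cover, and $X'$ admits a good compactification. Since $\Strat^{\rs}$ is topologically invariant (\cite[Thm.~5.5]{Kindler/Evidence}) one has $\pi^{\rs}(X')=\pi^{\rs}(X'')$, and a $\mathbb{G}_a$-quotient of $\pi^{\rs}(X)$ would restrict to a surjection of the finite-index subgroup $\mathrm{im}\bigl(\pi^{\rs}(X'')\to\pi^{\rs}(X)\bigr)$ onto the connected group $\mathbb{G}_a$, hence to a $\mathbb{G}_a$-quotient of $\pi^{\rs}(X')$; so it is enough to treat a variety $X$ admitting a good compactification $\overline X$. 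In that case a non-split self-extension $E$ of $\Oh_X$ has all its exponents along $\overline X\setminus X$ trivial in $\Z_p/\Z$, so the locally free $\tau$-extension for $\tau(0)=0$ has exponents $0\in\Z_p$ and nilpotent residues; and I would show, adapting the exponent computations of \cite{Gieseker/FlatBundles} and \cite{Kindler/FiniteBundles} together with the Frobenius-twist formula $E\mapsto E^{(i)}$ from the proof of Proposition~\ref{prop:max}, that these residues in fact vanish, i.e.\ $E=E_m$ in the notation of Section~\ref{sec:max}, so that $E$ extends to a stratified bundle on the proper variety $\overline X$. As $\pi^{\Strat}(\overline X)^{\mathrm{ab,uni}}$ is profinite for $\overline X$ proper (\cite{Esnault/Gieseker}, \cite{DosSantos}) and the monodromy of $E$ is a subgroup scheme of $\mathbb{G}_a$, it follows that $\pi(E,x)$ is finite.

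I expect the main obstacle to be this last step: showing that a unipotent regular singular stratified bundle on a variety with a good compactification $\overline X$ extends to an honest stratified bundle on $\overline X$, equivalently the vanishing of the residues of its $\tau$-extension. This is precisely where the generalizations of \cite{DosSantos} and \cite{Kindler/Evidence} announced at the start of this subsection are needed; by contrast the reduction to the case of a good compactification, the dichotomy between finite and $\mathbb{G}_a$-monodromy, and the passage to the profinite completion are routine given the machinery already developed.
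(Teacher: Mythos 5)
Your proposal follows essentially the same route as the paper: reduce to showing no $E\in\Strat^{\rs}(X)$ has $\pi(E)=\mathbb{G}_a$, hence to self-extensions of $\mathbb{I}$; handle the good-compactification case by extending $E$ across the boundary and invoking \cite{DosSantos}; and reduce the general case to that one via de Jong's alteration, topological invariance, and the Galois exact sequence of Lemma~\ref{lem:galois}. The one step you flag as the main obstacle — that a regular singular self-extension of $\mathbb{I}$ extends to a stratified bundle on $\overline{X}$ — is exactly what the paper disposes of by citing the proof of \cite[Lemma~4.1]{Kindler/Evidence}, so no new argument is needed there.
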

\begin{proof}
It is enough to prove that for every $E\in\Strat^{\rs}(X)$, if $\pi(E)$ is abelian and unipotent it has to be finite. For this, it is enough to show it does not have any quotient isomorphic to $\mathbb{G}_a$ or, equivalently, that there is no $E\in\Strat^{\rs}(X)$ with $\pi(E)=\mathbb{G}_a$. By way of contradiction assume that there is such $E$, then as $\mathbb{G}_a$ admits a faithful two dimensional representation which is an extension of the trivial representation, we can assume without loss of generality that $E\in\Ext^1_{\Strat^{\rs}}(\mathbb{I},\mathbb{I})$. 
If $X$ admits a good compactification $\overline{X}$, the proof of  \cite[Lemma~4.1]{Kindler/Evidence} shows that $E$ extends to $\bar{E}\in\Strat(\overline{X})$ and hence by \cite[Thm.~15]{DosSantos} $\pi(E)=\pi(\bar{E})$ is finite.
In general, let $f:Y\to X$ be a separable Galois modification of $X$ provided by de Jong's theorem and let $\mathcal{S}\subset\langle E\rangle_\otimes$ be the full subcategory of objects that are trivialized by $f^*$. As in the proof of Proposition~\ref{prop:Kun}, by topological invariance we can assume that $f$ is Galois. Then Lemma~\ref{lem:galois} implies that the short sequence
\[1\to\pi(f^*E)\to\pi(E)\to G'\to 1\]
is exact, with $G'=\pi(\mathcal{S})$ finite. In particular, as $f^*E\in\Ext^1_{\Strat^{\rs}(Y)}(\mathbb{I},\mathbb{I})$, by what we just proved $\pi(f^*E)$ is finite, hence so must be $\pi(E)$.
\end{proof}
The next proposition is proven in \cite[Thm.~4.2]{Kindler/Evidence}, assuming the existence of a good compactification for $X$:
\begin{proposition}[{compare with \cite[Thm.~4.2]{Kindler/Evidence}}]\label{prop:larsabe}
Let $X$ be a smooth connected variety over an algebraically closed field $k$. If $\pi^{t}(X)$ has trivial abelianization, the same holds for $\pi^{\rs}(X)$.
\end{proposition}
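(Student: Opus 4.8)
The plan is to show that $\pi^{\rs}(X)$ has no nontrivial commutative quotient. Since $k$ is algebraically closed, every nonzero commutative affine $k$-group scheme has a nontrivial quotient isomorphic to $\mathbb{G}_a$, to $\mathbb{G}_m$, or to one of the simple finite commutative group schemes $\mathbb{Z}/\ell$ ($\ell\neq\ch k$), $\mu_{\ch k}$, $\alpha_{\ch k}$, so it suffices to exclude a surjection of $\pi^{\rs}(X)$ onto each of these, equivalently a nonzero $E\in\Strat^{\rs}(X)$ whose monodromy group has one of these shapes. Put $p=\ch k$. The unipotent abelian cases $\mathbb{G}_a$, $\alpha_p$, $\mathbb{Z}/p$ are immediate from Proposition~\ref{prop:uni}: the largest abelian unipotent quotient of $\pi^{\rs}(X)$ is profinite and equals that of $\pi^{t}(X)$, which is trivial since $\pi^t(X)^{\mathrm{ab}}=0$. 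A surjection $\pi^{\rs}(X)\to\mathbb{Z}/\ell$ with $\ell\neq p$ corresponds to a connected degree-$\ell$ étale cover of $X$; this is tame, as $\ell$ is prime to $p$, so the surjection factors through $\pi^t(X)$ and is trivial by hypothesis. For a quotient isomorphic to $\mu_{p^k}$: this is a rank one $E\in\Strat^{\rs}(X)$ with $E^{\otimes p^k}\cong\I$, and the associated $\mu_{p^k}$-torsor $g\colon P\to X$ is finite, surjective and radicial (as $\mu_{p^k}$ is infinitesimal), hence a universal homeomorphism. After shrinking $X$ — harmless by Proposition~\ref{prop:prop}(ii), since $\pi(E_{|U})\simeq\pi(E)$ — we may assume $P$ smooth, so $g^{*}\colon\Strat^{\rs}(X)\to\Strat^{\rs}(P)$ is an equivalence by the topological invariance \cite[Thm.~5.5]{Kindler/Evidence}. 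As $g^{*}E\cong\I$, full faithfulness gives a nonzero horizontal map $E\to\I$, which for a rank one stratified bundle is an isomorphism; hence $E\cong\I$.

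There remains the case of a quotient isomorphic to $\mathbb{G}_m$, i.e. a rank one $E\in\Strat^{\rs}(X)$ of infinite order. When $X$ admits a good compactification, the nonexistence of such $E$ (together with the unipotent input above) is exactly \cite[Thm.~4.2]{Kindler/Evidence}. In general I would argue as in the proofs of Propositions~\ref{prop:Kun} and~\ref{prop:uni}: de Jong's Galois alteration theorem gives $f\colon Y\to X$ of group $G$ with $Y$ admitting a good compactification, and topological invariance of $\Strat^{\rs}$ together with Proposition~\ref{prop:prop}(ii) reduces to the case that $f$ is a finite étale Galois cover. For $E\in\Strat^{\rs}(X)$ with commutative monodromy, Lemma~\ref{lem:galois} gives an exact sequence $1\to\pi(f^{*}E)\to\pi(E)\to G'\to1$ with $G'$ a finite commutative quotient of $G$; the cases already settled force $G'=0$ (its unipotent part vanishes by Proposition~\ref{prop:uni}, its prime-to-$p$ étale part is a tame abelian quotient of $\pi^t(X)$ hence trivial, and its connected multiplicative part, a product of $\mu_{p^k}$'s, vanishes by the universal homeomorphism argument). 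Thus $\pi(E)=\pi(f^{*}E)$ is a commutative quotient of $\pi^{\rs}(Y)$, with $Y$ admitting a good compactification.

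The hard part will be the passage from $Y$ back to $X$: one cannot simply invoke \cite[Thm.~4.2]{Kindler/Evidence} for $Y$, because $\pi^{t}(Y)^{\mathrm{ab}}$ need not vanish even though $\pi^{t}(X)^{\mathrm{ab}}$ does — abelianization is not inherited by finite covers. I would handle this by running the analysis of \cite[Thm.~4.2]{Kindler/Evidence} $G$-equivariantly: extend $f^{*}E$ to a torsion free logarithmic bundle on a $G$-equivariant smooth compactification $\overline{Y}$ of $Y$, pass to its maximal sub-stratified bundle via Section~\ref{sec:max}, and note that commutative monodromy of infinite order forces either a non-torsion exponent of $f^{*}E$ along $\overline{Y}\setminus Y$ — whose prime-to-$p$ multiples are $G$-equivariant tame covers of $Y$, hence descend to nontrivial tame abelian covers of $X$ — or that the Picard variety $A$ of $\overline{Y}$ is nonzero, in which case the (automatically $G$-equivariant) isogenies $[n]\colon A\to A$ with $p\nmid n$ pull back to $Y$ and descend to nontrivial abelian covers of $X$ which are étale over $X$ and unramified along the image of $\overline{Y}\setminus Y$, hence tame. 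In either case $\pi^t(X)^{\mathrm{ab}}\neq0$, a contradiction; so no such $E$ exists, and $\pi^{\rs}(X)^{\mathrm{ab}}=0$.
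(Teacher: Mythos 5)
Your skeleton is the same as the paper's: split the commutative quotient into its unipotent and multiplicative parts, and kill the unipotent part with Proposition~\ref{prop:uni}. But the paper's treatment of the multiplicative part is a one-line citation of \cite[Prop.~3.16]{Kindler/Evidence}, which --- unlike \cite[Thm.~4.2]{Kindler/Evidence} --- does not assume a good compactification (it only needs exponents along a partial good compactification, which always exists). Your attempt to reprove that statement from scratch is where the argument breaks.

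The genuine gap is in your $\mathbb{G}_m$ case. After reducing to a rank one $f^*E$ on a Galois alteration $Y\to X$ admitting a good compactification, you produce (from non-torsion exponents, or from isogenies of the Picard variety of $\overline{Y}$) nontrivial $G$-equivariant tame abelian covers of $Y$ and assert that these ``descend to nontrivial tame abelian covers of $X$'', contradicting $\pi^{t}(X)^{\mathrm{ab}}=0$. This does not follow: the hypothesis says $\pi^{t}(X)$ is perfect, and a perfect profinite group routinely has finite-index (even normal, $G$-stable) subgroups with nontrivial abelianization, so the existence of a nontrivial abelian tame quotient of $\pi^{t}(Y)$ is not in conflict with the hypothesis. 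Moreover a $G$-equivariant cover $Z\to Y$ is not in general pulled back from a cover of $X$ --- the $G$-action may be twisted --- and the composite $Z\to Y\to X$ need not be Galois over $X$, let alone abelian. So no contradiction is actually reached, and the case of infinite multiplicative monodromy remains open in your argument. (A smaller repairable issue: in the $\mu_{p^k}$ case the torsor $P$ need not be reduced or smooth, while the topological invariance of \cite[Thm.~5.5]{Kindler/Evidence} is a statement about smooth varieties; one must first pass to a dense open of $P_{\red}$ that is smooth and saturated over $X$ before invoking it.)
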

\begin{proof}
Let $E\in\Strat^{\rs}(X)$ such that $\pi(E)$ is commutative, then $\pi(E)$ decomposes as a direct product of unipotent and multiplicative part. The first is trivial thanks to Proposition~\ref{prop:uni}, while the latter is trivial by \cite[Prop.~3.16]{Kindler/Evidence}.
\end{proof}

\begin{proposition}[{compare with \cite[Cor.~1.6]{Kindler/Evidence}}]\label{prop:abemor} Let $X$, $Y$ be smooth proper connected  varieties over an algebraically closed field $k$. Let $f:X\to Y$ such that the induced map $\pi^{t,ab}(X)\to\pi^{t,ab}(Y)$ is an isomorphism. Then $\pi^{\rs,ab}(X)\to\pi^{\rs,ab}(Y)$ is an isomorphism.
\end{proposition}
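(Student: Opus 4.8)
The plan is to reduce the statement about $\pi^{\rs,ab}$ to the statement about the full category of abelian-monodromy stratified bundles, and then to exploit the decomposition of a commutative affine group scheme into its unipotent and multiplicative parts, handling each part separately using the results just proven together with the proper case of \cite[Cor.~1.6]{Kindler/Evidence}. Concretely, saying that $f^*:\Strat^{\rs}(Y)\to\Strat^{\rs}(X)$ induces an isomorphism on abelianized regular singular fundamental groups is equivalent to saying that $f^*$ restricts to an equivalence between the full subcategories of objects with commutative monodromy group (equivalently, the Tannaka subcategories generated by all $E$ with $\pi(E)$ abelian). So the first step is to make that reformulation precise, using Proposition~\ref{prop:crit}: surjectivity on abelianizations means $f^*$ is fully faithful on the abelian parts with essential image closed under subobjects, and injectivity means the normal hull condition. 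By Proposition~\ref{prop:uni} every abelian unipotent quotient of $\pi^{\rs}$ is profinite and equals the corresponding quotient of $\pi^t$; by \cite[Prop.~3.16]{Kindler/Evidence} (as used in the proof of Proposition~\ref{prop:larsabe}) the abelian multiplicative part is also controlled by the tame/étale side.

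Next I would split $\pi^{\rs,ab}(X)\cong \pi^{\rs,ab}(X)_u\times\pi^{\rs,ab}(X)_m$ into unipotent and multiplicative (diagonalizable) parts, and similarly for $Y$, and observe that $f^*$ respects this decomposition since it is a tensor functor. For the unipotent part, Proposition~\ref{prop:uni} identifies $\pi^{\rs,ab}(X)_u$ with $\pi^{t,ab}(X)_u$ and $\pi^{\rs,ab}(Y)_u$ with $\pi^{t,ab}(Y)_u$, functorially in $f$; since $\pi^{t,ab}(X)\to\pi^{t,ab}(Y)$ is an isomorphism by hypothesis, so is the map on unipotent parts. For the multiplicative part, a diagonalizable group scheme is determined by its character group, and the characters of $\pi^{\rs,ab}(X)_m$ are exactly the rank-one regular singular stratified bundles on $X$; I would invoke \cite[Prop.~3.16]{Kindler/Evidence} together with \cite[Cor.~5.6]{Kindler/FiniteBundles} to identify rank-one regular singular bundles with rank-one tame bundles, i.e. with $\Hom(\pi^{t,ab}(\cdot),\mathbb{G}_m)$, again functorially, so that the hypothesis on $\pi^{t,ab}$ transports to an isomorphism on the multiplicative parts. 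Combining the two gives the isomorphism $\pi^{\rs,ab}(X)\to\pi^{\rs,ab}(Y)$.

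The main obstacle I anticipate is the functoriality and naturality of the identifications in Propositions~\ref{prop:uni} and \ref{prop:larsabe}: those statements are phrased for a single variety, and I would need to check that the isomorphisms $\pi^{\rs,ab}(-)_u\cong\pi^{t,ab}(-)_u$ and the character-group identification are natural with respect to an arbitrary morphism $f:X\to Y$ of smooth proper varieties, which amounts to tracing through the proofs (the de Jong alteration / topological invariance argument in Proposition~\ref{prop:uni}, and the extension argument of \cite[Lemma~4.1]{Kindler/Evidence}) and confirming they are compatible with pullback. A secondary point is that the properness of $X$ and $Y$ is what allows one to appeal directly to \cite[Cor.~1.6]{Kindler/Evidence} and \cite[Thm.~15]{DosSantos} for the tame/étale input, so I would be careful to use properness exactly where those cited results require it, and nowhere spuriously.
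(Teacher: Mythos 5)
Your proposal is correct and is essentially the paper's own argument: the paper proves this proposition by directly citing \cite[Cor.~1.6]{Kindler/Evidence} together with \cite[Thm.~15]{DosSantos}, and your decomposition of $\pi^{\rs,ab}$ into unipotent and multiplicative parts, with the unipotent part handled by the profiniteness result (Proposition~\ref{prop:uni}/\cite{DosSantos}) and the multiplicative part by the rank-one/character-group identification (\cite[Prop.~3.16]{Kindler/Evidence}), is exactly the content of those citations. The functoriality you worry about is harmless here, since in the proper case both identifications factor through the canonical (and functorial) profinite completion map $\pi^{\rs}\to\pi^{t}$.
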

\begin{proof}
This follows immediately from \cite[Cor.~1.6]{Kindler/Evidence} and \cite[Thm.~15]{DosSantos}.
\end{proof}
\subsection{Proof of the conjecture for homogeneous spaces}
\begin{definition} Let $X$ be a separated scheme of finite type over $k$, then we say that $X$ is \emph{quasi homogeneous} if it has a dense open subscheme $U$ which is a homogeneous space.
\end{definition}
\begin{theorem}\label{thm:conj} Let $X$ be a smooth connected  quasi homogeneous space such that $X_{\bar{k}}$ has trivial tame fundamental group. Then $\pi^{\rs}(X)=0$.
\end{theorem}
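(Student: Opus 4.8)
The plan is to reduce the statement to the case already treated (Proposition~\ref{prop:larsabe} plus the commutativity results) by producing a short exact sequence for $\pi^{\rs}(X)$.

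First I would note that, by Proposition~\ref{prop:prop}$(ii)$, replacing $X$ by the dense homogeneous open $U\subset X$ induces a faithfully flat map $\pi^{\rs}(U)\to\pi^{\rs}(X)$, and since the hypothesis on the tame fundamental group passes to dense opens, it suffices to treat the case where $X$ itself is homogeneous. By the argument in Proposition~\ref{prop:comm} (base change to $\bar k$ is faithfully flat on $\pi^{\rs}$, via \cite[Lemma~4.8, Prop.~3.7]{Battiston/Monodromy}) we may assume $k=\bar k$. Write $X=G/H$ for a connected group $G$ acting on $X$; by \cite[VI, Thm.~3.2]{SGA3} the quotient map $f:G\to X$ is an fppf $H$-torsor. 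I would then pass to the connected component $G^0$ and the finite \'etale cover it induces, reducing to the case $G$ connected (up to a finite piece which is absorbed at the end).

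Next, the structural input: $\pi^{\rs}(X)$ is an extension of a quotient of $\pi^{\rs}(G)$ by a finite group scheme. Concretely, decompose $H=H^0\cdot\pi_0(H)$; the $H^0$-torsor part is generically fppf-split, so Lemma~\ref{lem:homsp} gives that $\pi^{\rs}(X)$ is a quotient (faithfully flat image) of $\pi^{\rs}(G/H^0)$, which is commutative since $G$ is connected. The remaining $\pi_0(H)$-torsor is finite \'etale, and Lemma~\ref{lem:galois} presents $\pi^{\rs}(X)$ as an extension
\[
1\to \pi^{\rs}(G/H^0)'\to\pi^{\rs}(X)\to \pi_0(H)^t\to 1
\]
(exact at least on the right), where the kernel-quotient $\pi^{\rs}(G/H^0)'$ is commutative. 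Thus $\pi^{\rs}(X)$ is an extension of a finite group scheme by a commutative affine group scheme. Now invoke the hypothesis: $\pi^t(X)=0$, so in particular $\pi^{t,\mathrm{ab}}(X)=0$, hence by Proposition~\ref{prop:larsabe} we get $\pi^{\rs,\mathrm{ab}}(X)=0$. Since any extension of a nontrivial finite group scheme by a commutative group has nontrivial abelianization, and the commutative kernel is itself a quotient of $\pi^{\rs}(X)^{\mathrm{ab}}=0$ combined with the triviality of $\pi_0(H)^t$ (which is a subquotient of $\pi^t(X)=0$), we conclude $\pi^{\rs}(X)=0$.

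The main obstacle I anticipate is making the reduction ``$\pi^{\rs}(X)$ is commutative, hence trivial'' fully rigorous: one must check that the commutative kernel of the above extension really is killed by the vanishing of $\pi^{\rs,\mathrm{ab}}(X)$, i.e. that a commutative affine group scheme which surjects onto nothing but whose ``ambient'' group has trivial abelianization must itself be trivial — this uses that for a surjection $\pi^{\rs}(X)\twoheadrightarrow\pi^{\rs}(G/H^0)'$ with commutative target, the target is a quotient of $\pi^{\rs,\mathrm{ab}}(X)=0$, so $\pi^{\rs}(G/H^0)'=0$, and then $\pi^{\rs}(X)$ maps isomorphically to $\pi_0(H)^t$, a subquotient of $\pi^t(X)=0$. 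Assembling these quotient/subobject manipulations carefully via Proposition~\ref{prop:crit} is the delicate part; the rest is bookkeeping with de Jong alterations and topological invariance (\cite[Thm.~5.5]{Kindler/Evidence}) exactly as in the proof of Proposition~\ref{prop:Kun}.
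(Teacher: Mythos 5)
Your treatment of the homogeneous case follows the paper's route (reduce to $G$ connected, split $H$ into $H_0$ and its \'etale part, apply Lemma~\ref{lem:homsp} and Lemma~\ref{lem:galois}, kill the finite quotient using $\pi^{t}(X)=0$, conclude with Proposition~\ref{prop:larsabe}), but the bookkeeping at the end is inverted: $\pi^{\rs}(G/H^0)'$ is the \emph{kernel} of $\pi^{\rs}(X)\to\pi_0(H)^t$, not a quotient of $\pi^{\rs}(X)$, so there is no surjection $\pi^{\rs}(X)\twoheadrightarrow\pi^{\rs}(G/H^0)'$ to invoke. The correct order is: $\pi^{t}(X)=0$ forces $\pi_0(H)^t=0$, hence $\pi^{\rs}(X)$ coincides with the image of the commutative group $\pi^{\rs}(G/H^0)$ and is therefore commutative, and only then does $\pi^{\rs,ab}(X)=0$ give $\pi^{\rs}(X)=0$. (Your auxiliary claim that any extension of a nontrivial finite group scheme by a commutative group has nontrivial abelianization is false --- take a finite simple constant group --- but it is also not needed.)

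The genuine gap is the very first reduction. You assert that ``the hypothesis on the tame fundamental group passes to dense opens,'' and this is false: for a dense open $U\subset X$ the map $\pi^{t}(U)\to\pi^{t}(X)$ is \emph{surjective}, so triviality passes from $U$ up to $X$ but not conversely --- removing a divisor creates new tame covers. For instance $\pi^{t}(\mathbb{P}^1)=0$ while $\pi^{t}(\mathbb{G}_m)$ is the prime-to-$p$ profinite completion of $\mathbb{Z}$. So you cannot simply reduce to the case where $X$ itself is homogeneous. The paper instead applies the \emph{structural} part of the homogeneous argument to $U$ without any vanishing hypothesis, obtaining an exact sequence $A\to\pi^{\rs}(U)\to H\to 1$ with $A$ commutative and $H$ finite, pushes it forward along the faithfully flat map $\pi^{\rs}(U)\to\pi^{\rs}(X)$ of Proposition~\ref{prop:prop}(ii), and then uses that $\pi^{\rs}(X)$ has no non-trivial finite quotients (because $\pi^{t}(X)=0$) to conclude that the image of $A$ is all of $\pi^{\rs}(X)$; commutativity of $\pi^{\rs}(X)$ and Proposition~\ref{prop:larsabe} then finish. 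This extra step is exactly what your reduction skips and what is needed. (Incidentally, de Jong alterations play no role in this proof; they are needed for Proposition~\ref{prop:Kun}, not here.)
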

\begin{proof}
By \cite[Lemma~4.8]{Battiston/Monodromy} together with \cite[Prop.~3.7]{Battiston/Monodromy} we can without loss of generality assume that $k$ is algebraically closed. 
Assume first that $X$ is an homogeneous space. Therefore there exists $G$ group scheme over $k$ such that $X$ is (isomorphic to) the fppf quotient $G/H$ for some subgroup $H\subset G$. Let $G_0$ be the connected component of the identity in $G$, then the composition $G_0\to G\to G/H$ has kernel $H\cap G_0$, in particular we have a monomorphism $j:G_0/(G_0\cap H)\to G/H$, that is a closed embedding. As 
\[\dim G/H=\dim G-\dim H=\dim G_0-\dim (G_0\cap H)=\dim G_0/(G_0\cap H),\]
and $X=G/H$ is smooth, we have that $G_0/(G_0\cap H)$ is isomorphic to a connected component of $G/H$, but the latter is connected hence the closed embedding $j$ is actually an isomorphism. That is, without loss of generality we can assume that $G$ is connected. It is moreover smooth by \cite[VI, Thm.~3.2]{SGA3}.
 By topological invariance of $\pi^{\rs}$ we can assume as in the proof of Lemma~\ref{lem:homsp} that $H$ is reduced, hence smooth. There is hence a canonical short exact sequence
\[1\to H_0\to H\to H^{\et}\to 1,\]
where $H_0$ is the connected component of the identity of $H$ and $H^{\et}$ is an \'etale group. In particular we can consider the induced faithfully flat morphism
\[G/H_0\to G/H=(G/H_0)/ H^{\et},\]
which will be a $H^{\et}$-torsor by \cite[VI, Thm.~3.2]{SGA3}, that is, an \'etale cover. 
In particular by Lemma~\ref{lem:galois} we have an exact sequence
\[\pi^{\rs}(G/H_0)\to\pi^{\rs}(G/H)\to H^{t}\to 1,\]
where $H^t$ is the maximal quotient of $H^{\et}$ corresponding to a tame cover of $G/H$.

As $G/H$ has no non-trivial tame covers, we have that $H^{t}$ is trivial, hence $\pi^{\rs}(G/H)$ is a quotient of $\pi^{\rs}(G/H_0)$, which is commutative by Lemma~\ref{lem:homsp} and Proposition~\ref{prop:larsabe} concludes the proof.

Now to the general case: there exists a dense $U\subset X$, with $U$ a homogeneous space. By the first part of the proof, we have an exact sequence
\[A\to \pi^{\rs}(U)\to H\to 1\]
where $A$ is a commutative group and $H$ is a finite group. It is enough to show that the composition $\phi:A\to\pi^{\rs}(U)\to\pi^{\rs}(X)$ is surjective, and then the proof follows form Proposition~\ref{prop:larsabe}.
The image of $A$ in $\pi^{\rs}(U)$ has finite index and is normal, in particular by Proposition~\ref{prop:prop} $(ii)$, $\phi(A)$ is normal and of finite index in $\pi^{\rs}(X)$, but as $\pi^t(X)$ is trivial, there are no non-trivial finite quotients of $\pi^{\rs}(X)$, hence $\phi(A)=\pi^{\rs}(X)$.
\end{proof}

If everything is proper, we are now able to prove the relative form of Gieseker conjecture:
\begin{proposition}\label{prop:giemor} Let $X,Y$ be smooth proper homogeneous varieties over an algebraically closed field $k$ and $f:X\to Y$ such that  $f^*:\pi^{t}(X)\to\pi^{t}(Y)$ is an isomorphism. Then  $f^*:\pi^{\rs}(X)\to\pi^{\rs}(Y)$ is an isomorphism.
\end{proposition}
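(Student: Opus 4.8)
The plan is to reduce, via the classification of smooth projective homogeneous varieties in positive characteristic, to the case where the monodromy groups are commutative, and then to quote Proposition~\ref{prop:abemor}. Concretely, I would first isolate the following claim: \emph{for every smooth proper homogeneous space $Z$ over $k$ (algebraically closed) the group scheme $\pi^{\rs}(Z)$ is commutative}. Granting this, the proposition is immediate: one has $\pi^{\rs}(X)=\pi^{\rs,ab}(X)$ and $\pi^{\rs}(Y)=\pi^{\rs,ab}(Y)$, while the hypothesis $f^{*}\colon\pi^{t}(X)\xrightarrow{\sim}\pi^{t}(Y)$ induces an isomorphism $\pi^{t,ab}(X)\xrightarrow{\sim}\pi^{t,ab}(Y)$ on abelianizations, so Proposition~\ref{prop:abemor} gives that $f^{*}\colon\pi^{\rs,ab}(X)\to\pi^{\rs,ab}(Y)$, hence $f^{*}\colon\pi^{\rs}(X)\to\pi^{\rs}(Y)$, is an isomorphism. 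Note that no compatibility of $f$ with the homogeneous structures is required, since $f$ only ever enters through Proposition~\ref{prop:abemor}.

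To prove the claim I would invoke the classification of smooth projective homogeneous varieties over an algebraically closed field (the positive characteristic analogue of the Borel--Remmert theorem): any such $Z$ is isomorphic to a product $A\times F$, with $A$ an abelian variety and $F=\prod_{i}G_{i}/P_{i}$ a product of flag varieties. Each $G_{i}/P_{i}$ is proper with trivial \'etale fundamental group, so $\pi^{t}(F)=0$ by \'etale K\"unneth for proper varieties, whence $\pi^{\rs}(F)=0$ — either by Theorem~\ref{thm:conj} applied to the (quasi-)homogeneous space $F$, or directly by Esnault--Mehta's theorem \cite{Esnault/Gieseker} since $F$ is proper. The K\"unneth formula (Proposition~\ref{prop:Kun}) then yields $\pi^{\rs}(Z)\cong\pi^{\rs}(A)\times\pi^{\rs}(F)=\pi^{\rs}(A)$, which is commutative by Proposition~\ref{prop:comm}. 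Alternatively, one may write $Z=(A\times\prod_{i}G_{i})/(e\times\prod_{i}P_{i})$ as a quotient of a connected group by a connected (since parabolic) subgroup and invoke the last assertion of Lemma~\ref{lem:homsp} directly, bypassing the K\"unneth step.

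I expect the only genuinely external ingredient to be the structural decomposition $Z\cong A\times F$; everything else is formal from Propositions~\ref{prop:Kun}, \ref{prop:comm} and \ref{prop:abemor} together with Theorem~\ref{thm:conj}. If one wished to avoid the classification, the natural substitute would be to present $Z=G/H$ with $G$ connected and $H$ reduced (as in the proof of Theorem~\ref{thm:conj}), to use the tame \'etale cover $G/H_{0}\to G/H$ with group $\Gamma=\pi_{0}(H)$ and the resulting exact sequence $1\to\pi^{\rs}(G/H_{0})\to\pi^{\rs}(Z)\to\Gamma\to 1$ from Lemma~\ref{lem:galois} — in which $\pi^{\rs}(G/H_{0})$ is commutative by Lemma~\ref{lem:homsp} — and then to deduce from the hypothesis on $\pi^{t}$ that this extension is itself abelian. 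That last point, however, seems to need essentially the same information about the structure of $Z$, so I would rather cite the classification outright; this is the step I would flag as the crux of the argument.
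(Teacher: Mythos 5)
Your proof is correct, but it takes a genuinely different route from the paper's. The paper never shows (and does not need) that $\pi^{\rs}(Z)$ is commutative for a smooth proper homogeneous $Z$: the proof of Theorem~\ref{thm:conj} only yields the weaker ``commutative-by-finite'' presentations $A_X\to\pi^{\rs}(X)\to H_X\to 1$ and $A_Y\to\pi^{\rs}(Y)\to H_Y\to 1$, and the paper's proof of Proposition~\ref{prop:giemor} works directly with those: it uses the isomorphism on $\pi^{t}$ to identify $H_X$ with $H_Y$, passes to the corresponding tame Galois covers $Y'\to Y$ and $X'=Y'\times_Y X\to X$, notes via Lemma~\ref{lem:galois} that $\pi^{\rs}(X')$ and $\pi^{\rs}(Y')$ are abelian (being images of $A_X$ and $A_Y$), applies Proposition~\ref{prop:abemor} upstairs, and descends through the two exact rows. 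You instead import the classification of complete homogeneous varieties in arbitrary characteristic, $Z\cong A\times\prod_i G_i/P_i$ (Sancho de Salas, with later corrections by Brion), which upgrades the paper's extension structure to outright commutativity of $\pi^{\rs}(Z)$ and makes the appeal to Proposition~\ref{prop:abemor} immediate; your alternative via Lemma~\ref{lem:homsp} applied to $(A\times\prod_i G_i)/(e\times\prod_i P_i)$ is equally sound once the decomposition is granted. What you gain is brevity and a stronger intermediate statement; what you pay is a heavy external structure theorem that the paper is visibly organized to avoid (all of Section~\ref{sec:conj} runs off the bare presentation $G/H$), together with the need for a careful citation --- and note that in positive characteristic the $P_i$ may be non-reduced parabolic subgroup schemes, so the triviality of $\pi^{t}(G_i/P_i)$ should be justified via the universal homeomorphism from $G_i/(P_i)_{\red}$ and topological invariance. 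Your observation that $f$ need not respect the homogeneous structures is correct and is shared by the paper's argument.
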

\begin{proof}
By the proof of Theorem~\ref{thm:conj}, there are affine commutative group schemes $A_X$ and $A_Y$, finite \'etale group schemes $H_X$ and $H_Y$ and exact sequences
\[\begin{split}A_X\to \pi^{\rs}(X)\to H_X\to 1,\\
A_Y\to \pi^{\rs}(Y)\to H_Y\to 1.\end{split}\]
As $f$ induces an isomorphism of the tame fundamental groups, we can assume without loss of generality that $H_X=H_Y$, and consider the corresponding tame \'etale covering $Y'\to Y$ and $X'=Y'\times_Y X\to X$. 
By Lemma~\ref{lem:galois} together with the fact that $H_X=H_Y$ correspond to tame coverings of $X$ and $Y$, we get a commutative diagram with exact rows
\[\xymatrix{
1\ar[r]&\pi^{\rs}(X')\ar[r]\ar[d]&\pi^{\rs}(X)\ar[d]\ar[r]&H_X\ar[d]^{\id}\ar[r]& 1\\
1\ar[r]&\pi^{\rs}(Y')\ar[r]&\pi^{\rs}(Y)\ar[r]&H_Y\ar[r]& 1
}.\]
The same diagram holds for the tame fundamental group, therefore the isomorphism $\pi^{t}(X)\to \pi^{t}(Y)$ restricts to an isomorphism $\pi^{t}(X')\to\pi^{t}(Y')$ and the latter are abelian as they are in the image of $A_X$ and $A_Y$ respectively. Hence Proposition~\ref{prop:abemor} implies that $\pi^{\rs}(X')\to\pi^{\rs}(Y')$ is an isomorphism and so is $\pi^{\rs}(X)\to\pi^{\rs}(Y)$.
\end{proof}

\addcontentsline{toc}{section}{\refname}
\printbibliography

\end{document}